\newtheorem{theorem}{Theorem}[section]
\newtheorem{claim}[theorem]{Claim}
\newtheorem{definition}[theorem]{Definition}
\newtheorem{lemma}[theorem]{Lemma}
\newtheorem{proposition}[theorem]{Proposition}
\newtheorem{remark}[theorem]{Remark}
\newenvironment{proof}[1][Proof]{\noindent\textit{#1.} }{\hfill \rule{0.5em}{0.5em}}
\numberwithin{equation}{section}
\begin{document}

\title{\textbf{Persistence of exponential trichotomy for linear operators:
A\ Lyapunov-Perron approach}}
\author{\textsc{\ A. Ducrot, P. Magal, O. Seydi} \\
{\small \textit{Institut de Mathematiques de Bordeaux, UMR CNRS 5251, }} \\
{\small \textit{\ Universite Bordeaux Segalen, 3ter place de la Victoire,
33000 Bordeaux, France}} }
\date{}
\maketitle

\begin{abstract}
In this article we revisit the perturbation of exponential trichotomy of
linear difference equation in Banach space by using a Perron-Lyapunov \cite%
{Perron} fixed point formulation for the perturbed evolution operator. This
approach allows us to directly re-construct the perturbed semiflow without
using shift spectrum arguments. These arguments are presented to the case of
linear autonomous discrete time dynamical system. This result is then
coupled to Howland semigroup procedure to obtain the persistence of
exponential trichotomy for non-autonomous difference equations as well as
for linear random difference equations in Banach spaces.
\end{abstract}

\section{Introduction}

Let $A\in \mathcal{L}\left( X\right) $ be a bounded linear operator on a
Banach space $(X,\left\Vert .\right\Vert ).\ $Recall that the spectral
radius of $A$ is defined by 
\begin{equation*}
r\left( A\right) :=\lim_{n\rightarrow +\infty }\left\Vert A^{n}\right\Vert _{%
\mathcal{L}\left( X\right) }^{1/n}.
\end{equation*}%
Assume that $A$ has a \textbf{state space decomposition}, whenever $A$ is
regarded as the following discrete time dynamical system 
\begin{equation}
\left\{ 
\begin{array}{l}
x_{n+1}=Ax_{n},\text{ for }n\in \mathbb{N}, \\ 
x_{0}=x\in X.%
\end{array}%
\right.   \label{1.1}
\end{equation}%
Namely, we can find three closed subspaces $X_{s}$ the \textbf{stable
subspace}, $X_{c}$ the \textbf{central subspace}, and $X_{u}$ the \textbf{%
unstable subspace} such that 
\begin{equation*}
X=X_{s}\oplus X_{c}\oplus X_{u}\text{ and }A\left( X_{k}\right) \subset
X_{k},,\forall k=s,c,u.
\end{equation*}%
Moreover if we define for each $k=s,c,u$ $A_{k}\in \mathcal{L}\left(
X_{k}\right) $ the part of $A$ in $X_{k}$ (i.e. $A_{k}x=Ax,\forall x\in X_{k}
$). Then there exists a constant $\alpha \in \left( 0,1\right) $ such that 
\begin{equation*}
r\left( A_{s}\right) \leq \alpha <1
\end{equation*}%
the linear operator $A_{u}$ on $X_{u}$ is invertible and 
\begin{equation*}
r\left( A_{u}^{-1}\right) \leq \alpha <1
\end{equation*}%
and the operator $A_{c}$ on $X_{c}$ is invertible and 
\begin{equation*}
r\left( A_{c}\right) <\alpha ^{-1}\text{ and }r\left( A_{c}^{-1}\right)
<\alpha ^{-1}.
\end{equation*}%
We summarize the notion of state space decomposition into the following
definition.\ In the context of linear dynamical system (or linear
skew-product semiflow) this notion also corresponds to the notion of
exponential trichotomy. The following definition corresponds to the one
introduced by Hale and Lin in \cite{Hale}.

\begin{definition}
\label{DE1.0}Let $A\in \mathcal{L}\left( X\right) $ be a bounded linear
operator on a Banach space $(X,\left\Vert .\right\Vert ).\ $We will say that 
$A$ has an \textbf{exponential trichotomy} (or $A$ is \textbf{exponentially
trichotomic}) if there exist three bounded linear projectors $\Pi _{s},\Pi
_{c},\Pi _{u}\in \mathcal{L}\left( X\right) $ such that%
\begin{equation*}
X=X_{s}\oplus X_{c}\oplus X_{u},
\end{equation*}%
and 
\begin{equation*}
A\left( X_{k}\right) \subset X_{k},,\forall k=s,c,u,
\end{equation*}%
where $X_{k}:=\Pi _{k}\left( X\right)$, for $k=s,c,u$, and%
\begin{equation*}
X_{c}\oplus X_{u}=(I-\Pi _{s})(X)\text{, }X_{s}\oplus X_{u}=(I-\Pi _{c})(X)%
\text{ and }X_{s}\oplus X_{c}=(I-\Pi _{u})(X).
\end{equation*}%
Moreover we assume that there exists a constant $\alpha \in \left(
0,1\right) $ satisfying the following properties:

\begin{itemize}
\item[(i)] Let $A_{s}\in \mathcal{L}\left( X_{s}\right) $ be the part of $A$
in $X_{s}$ (i.e. $A_{s}(x)=A(x),\forall x\in X_{s}$) we assume that $r\left(
A_{s}\right) \leq \alpha$ ;

\item[(ii)] Let $A_{u}\in \mathcal{L}\left( X_{u}\right) $ be the part of $A$
in $X_{u}$ (i.e. $A_{u}(x)=A(x),\forall x\in X_{u}$) we assume that $A_{u}$
is invertible and $r\left( A_{u}^{-1}\right) \leq \alpha$;

\item[(iii)] Let $A_{c}\in \mathcal{L}\left( X_{c}\right) $ be the part of $%
A $ in $X_{c}$ (i.e. $A_{c}(x)=A(x),\forall x\in X_{c}$) we assume that $%
A_{c}$ is invertible and $r\left( A_{c}\right) <\alpha ^{-1}$ and $r\left(
A_{c}^{-1}\right) <\alpha ^{-1}$.
\end{itemize}
\end{definition}

Let $A:D(A)\subset X\rightarrow X$ is a linear operator on a Banach space $X$%
. Let $Y\subset X$ is a subspace of $X.$ Recall that $A_{Y}:D\left(
A_{Y}\right) \subset Y\rightarrow Y$ the part of $A$ in $Y$ is defined by%
\begin{equation*}
D\left( A_{Y}\right) :=\left\{ x\in D(A)\cap Y:Ax\in Y\right\}\text{ and }%
A_{Y}x=Ax,\;\forall x\in D\left( A_{Y}\right) .
\end{equation*}%
Note that in Definition \ref{DE1.0} only the forward information are used on
the stable part $X_{s}$, forward and backward for the central part $X_{c}$
while only forward information are necessary on the unstable part $X_{u}$.
This remark motivates the following definition of exponential trichotomy for
unbounded linear operator operator that will be used throughout this work.

\begin{definition}
\label{DE1.1}Let $A:D(A)\subset X\rightarrow X$ be a closed linear operator
on a Banach space $(X,\left\Vert .\right\Vert ).\ $We will say that $A$ has
an {\textbf{e}xponential trichotomy} (or $A$ is \textbf{exponentially
trichotomic}) if there exist three bounded linear projectors $\Pi _{s},\Pi
_{c},\Pi _{u}\in \mathcal{L}\left( X\right) $ such that%
\begin{equation}
X=X_{s}\oplus X_{c}\oplus X_{u},\text{ where }X_{k}:=\Pi _{k}\left( X\right)
,\forall k=s,c,u,  \label{1.2}
\end{equation}
and%
\begin{equation*}
X_{c}\oplus X_{u}=(I-\Pi _{s})(X)\text{, }X_{s}\oplus X_{u}=(I-\Pi _{c})(X)%
\text{ and }X_{s}\oplus X_{c}=(I-\Pi _{u})(X).
\end{equation*}%
Moreover we assume that 
\begin{equation*}
D(A)=X_{s}\oplus X_{c}\oplus \left( D(A)\cap X_{u}\right) .
\end{equation*}%
and 
\begin{equation}
A\left( D(A)\cap X_{k}\right) \subset X_{k},,\forall k=s,c,u.  \label{1.3}
\end{equation}%
Furthermore we assume that there exists a constant $\alpha \in \left(
0,1\right) $ satisfying the following properties:

\begin{itemize}
\item[(i)] Let $A_{s}\in \mathcal{L}(X_{s})$ be the part of $A$ in $X_{s}$,
we assume that%
\begin{equation}
r\left( A_{s}\right) \leq \alpha ;  \label{1.4}
\end{equation}

\item[(ii)] Let $A_{u}:D(A_{u})\subset X_{u}\rightarrow X_{u}$ be the part
of $A$ in $X_{u}$,\ we assume that $A_{u}$ is invertible and 
\begin{equation}
r\left( A_{u}^{-1}\right) \leq \alpha ;  \label{1.5}
\end{equation}

\item[(iii)] Let $A_{c}\in \mathcal{L}(X_{c})$ be the part of $A$ in $X_{c},$%
we assume that $A_{c}$ is invertible and 
\begin{equation}
r\left( A_{c}\right) <\alpha ^{-1}\text{ and }r\left( A_{c}^{-1}\right)
<\alpha ^{-1}\text{.}  \label{1.6}
\end{equation}
\end{itemize}
\end{definition}

\begin{remark}
\label{RE1.2}The above properties (\ref{1.5})-(\ref{1.7}) are also
equivalent to say that there exist three constants $\kappa \geq 1$ and $%
0<\rho _{0}<\rho $ such that 
\begin{equation}
\left\Vert A_{c}^{n}\right\Vert _{\mathcal{L}\left( X_{c}\right) }\leq
\kappa e^{\rho _{0}\left\vert n\right\vert },\forall n\in \mathbb{Z},
\label{1.7}
\end{equation}%
\begin{equation}
\left\Vert A_{s}^{n}\right\Vert _{\mathcal{L}\left( X_{s}\right) }\leq
\kappa e^{-\rho n},\forall n\in \mathbb{N},  \label{1.8}
\end{equation}%
$0\in \rho \left( A_{u}\right) $ and 
\begin{equation}
\left\Vert A_{u}^{-n}\right\Vert _{\mathcal{L}\left( X_{u}\right) }\leq
\kappa e^{-\rho n},\forall n\in \mathbb{N}.  \label{1.9}
\end{equation}%
In the sequel, the above estimates will be referred as exponential
trichotomy with exponents $\rho _{0}<\rho $, constant $\kappa $ and
associated to the projectors $\left\{ \Pi ^{\alpha }\right\} _{\alpha
=s,c,u} $.
\end{remark}

\begin{remark}
Since the linear operator $A$ is assumed to be closed, by the closed graph
theorem, Definition \ref{DE1.1} coincides with Definition \ref{DE1.0} if and
only if $D(A_{u})=X_{u}$.
\end{remark}

By using the definition of exponential trichotomy we may also define the
notion of exponential dichotomy.\ 

\begin{definition}
\label{DE1.3}Let $A:D(A)\subset X\rightarrow X$ be a closed linear operator
on a Banach space $(X,\left\Vert .\right\Vert ).\ $We will say that $A$ has
an \textbf{exponential dichotomy} if $A$ has an exponential trichotomy with $%
X_{c}=\left\{ 0\right\} .$
\end{definition}

The aim of this paper is to study the persistence of exponential trichotomy
(according to Definition \ref{DE1.1}) under small bounded additive
perturbation. Before going to our main result and application to
non-autonomous problem, let us recall that exponential dichotomy (trichotomy
and more generally invariant exponential splitting) is a basic tool to study
stability for non-autonomous dynamical systems (see for instance \cite%
{Coppel, Barreira-Valls-2008, Potzsche} and the references therein). It is
also a powerful ingredient to construct suitable invariant manifolds for
non-linear problems (see \cite{Chicone-Latushkin-1997, Chow-Liu-Yi-2000(a),
Barreira-Valls-2008} and the references therein). In the last decades a lot
of attention and progresses have been made to understand invariant splitting
for non-autonomous linear dynamical systems (continuous time as well as
linear difference equations) as well as their persistence under small
perturbations. We refer for instance to \cite{Sacker-Sell-1974,
Sacker-Sell-1976, Sacker-Sell-1978, Sacker-Sell-1994, Henry, Hale, Popescu,
Latushkin-Schnaubelt, Chow-Leiva-1995, Chow-Leiva-1996} and the references
cited therein.

Let us also mention the notion of non-uniform dichotomy and trichotomy in
which the boundedness of the projectors is relaxed allowing unbounded linear
projectors (see for instance \cite{Barreira-Valls-2007, Barreira-Valls-2008,
Barreira-Valls-2009, Barreira-Valls-2010} for non-autonomous dynamical
system and \cite{Zhou-Lu-Zhang} for random linear difference equations).

The persistence of exponential splitting under small perturbation is also an
important problem with several applications in dynamical system such as
shadowing properties. We refer to Palmer \cite{Palmer-1987, Palmer-2011} and
the reference therein.

Finally we would like to compare our definition of exponential splitting
with the one recently considered by Potzsche in his monograph (see Chapter 3
in \cite{Potzsche}). In the homogeneous case, Potzsche considers exponential
splitting for a pair of linear operator $(A,B)\in \mathcal{L}(X,Y)$ where $X$
and $Y$ denote two Banach spaces. Note that $X$ can be different from $Y$ so
that this framework allows to applies to closed linear operators. Let us
recall that when $(A,B)\in \mathcal{L}(X,Y)$ one may consider the
corresponding ($Y-$valued) linear difference equation on $X$ defined as 
\begin{equation*}
Bx_{k+1}=Ax_k,\;\;k\in\mathbb{Z}.
\end{equation*}
We now recall the definition of exponential dichotomy used by Potzsche in 
\cite{Potzsche}:

\begin{definition}
\label{DEF-Pot} An operator pair $(A,B)\in \mathcal{L}(X,Y)^{2}$ acting
between two Banach spaces $X$ and $Y$ is said to have an exponential
dichotomy if there exist $\kappa >0$, $\rho >0$ and two orthogonal and
complementary projectors $\Pi _{s},\Pi _{u}\in \mathcal{L}(X)$ such that, by
setting $X_{k}=\Pi _{k}\left( X\right) $ for $k=s,u$%
\begin{equation*}
X=X_{s}\oplus X_{u}
\end{equation*}
\begin{equation*}
\begin{split}
& \ker B|_{X_{s}}=\{0\},\;R\left( A\Pi _{s}\right) \subset R\left( B\Pi
_{s}\right) =:Y_{s} \\
& \ker A_{|X_{u}}=\{0\},\;\;R(B\Pi _{u})\subset R(A\Pi _{u})=:Y_{u}
\end{split}%
\end{equation*}%
and $\Phi _{s}:=\left( B_{|Y_{s}}^{-1}A\right) _{|X_{s}}\in \mathcal{L}%
(X_{s})$ and $\Phi _{u}:=\left( A_{|Y_{u}}^{-1}B\right) _{|X_{u}}\in 
\mathcal{L}(X_{u})$ while 
\begin{equation*}
\Vert \Phi _{s}^{n}\Vert _{\mathcal{L}(X_{s})}\leq \kappa e^{-n\rho },\;%
\text{ and }\Vert \Phi _{u}^{n}\Vert _{\mathcal{L}(X_{s})}\leq \kappa
e^{-n\rho }\;\;\forall n\geq 0.
\end{equation*}
\end{definition}

Note that when $A:D(A)\subset X\rightarrow X$ is a closed linear operator,
the application of this theory to the pair $(A,J)\in \mathcal{L}(D(A),X)$
(where $J:D(A)\rightarrow X$ denotes the canonical embedding from $D\left(
A\right) $ into $X$ i.e.\ $J(x)=x,\forall x\in D\left( A\right) $) would
lead us to a splitting of the Banach space $D(A)$ (endowed with the graph
norm). Let us also notice that when $A\in \mathcal{L}(X)$ has an exponential
dichotomy (according to Definition \ref{DE1.0}) then the pair $(A,I_{X})\in 
\mathcal{L}(X)^{2}$ has an exponential dichotomy in the above sense
(Definition \ref{DEF-Pot}). In the same way when $A\in \mathcal{L}(X)$ has
an exponential trichotomy (according to Definition \ref{DE1.0}) then the
pair $(A,I_{X})\in \mathcal{L}(X)^{2}$ has $3-$ exponential invariant
splitting in the sense of Potzsche \cite[Definition 3.4.12 p.135]{Potzsche}.

Now let $A:D(A)\subset X\rightarrow X$ be an exponential dichotomic closed
linear operator with parameter $\kappa >0$ $\rho >0$ and projectors $\Pi
_{k} $ $k=s,u$ (see Definition \ref{DE1.1}). Consider the linear operator $%
\widehat{B}\in \mathcal{L}(X)$ defined by $\widehat{B}=\left( A_{u}^{-1}\Pi
_{u}+\Pi _{s}\right) \in \mathcal{L}(X)$. Then by applying $\widehat{B}$ on
the left side of (\ref{1.1}), the linear difference equation $%
x_{n+1}=Ax_{n} $ becomes 
\begin{equation*}
\widehat{B}x_{n+1}=\widehat{B}Ax_{n}
\end{equation*}%
or equivalently 
\begin{equation*}
\widehat{B}x_{n+1}=\widehat{A}x_{n}
\end{equation*}%
where $\widehat{A}:=\Pi _{u}+A_{s}\Pi _{s}\in \mathcal{L}(X)$ is a bounded
extension of $\widehat{B}A:D(A)\subset X\rightarrow X$ (the unique bounded
extension if $A$ is densely defined). In order to deal with the above linear
difference equation, one may consider the operator pair $\left( \widehat{A},%
\widehat{B}\right) $. Note that $\ker \widehat{B}=\{0\}$ and $\widehat{B}%
^{-1}:D\left( \widehat{B}^{-1}\right) \subset X\rightarrow X$ is the closed
linear operator defined by 
\begin{equation*}
D\left( \widehat{B}^{-1}\right) =R(\widehat{B})=X_{s}\oplus D(A)\cap
X_{u}=D(A)\text{ and }\widehat{B}^{-1}=A_{u}\Pi _{u}+\Pi _{s}.
\end{equation*}%
Then it is easy to check that if $A:D(A)\subset X\rightarrow X$ has an
exponential dichotomy (see Definition \ref{DE1.1}) then the pair $\left( 
\widehat{B},\widehat{A}\right) \in \mathcal{L}(X)^{2}$ has an exponential
dichotomy according to Definition \ref{DEF-Pot}.

One can therefore try to use this operator pair framework to study the
persistence of exponential dichotomy provided by the extended Definition \ref%
{DE1.1}. Let recall that Potzsche derived in his monograph a general
roughness result using the operator pair framework (see Theorem 3.6.5
p.165). Let $\left( A,B\right) \in \mathcal{L}(X,Y)^{2}$ be an exponential
dichotomy operator pair and let $\left( \overline{A},\overline{B}\right) \in 
\mathcal{L}(X,Y)^{2}$ be a given perturbation. Then if 
\begin{equation*}
R(A)\subset R(B),\;R\left( \overline{A}\right) \subset R(B)\;\text{and }%
R\left( \overline{B}\right) \subset R(B)
\end{equation*}
then under suitable smallness assumptions then the operator pair $\left( A+%
\overline{A},B+\overline{B}\right) $ has an exponential dichotomy.\newline

\noindent Consider an exponentially dichotomic closed linear operator $%
A:D(A)\subset X\rightarrow X$ as well as a small perturbation $C\in \mathcal{%
L}(X)$. Using the above transformation the linear difference equation $%
x_{k+1}=(A+C)x_{k}$ rewrites as studying the invariant splitting for the
operator pair $\left( \widehat{B},\widehat{A}+\widehat{B}C\right) \in 
\mathcal{L}(X)^{2}$. In that context, note the compatibility condition $%
R\left( \widehat{A}\right) \subset R\left( \widehat{B}\right) $ re-writes as 
$X_{u}\oplus R\left( A_{s}\right) \subset \left( D(A)\cap X_{u}\right)
\oplus X_{s}$ that is true if and only if $D(A)\cap X_{u}=X_{u}$, that is $%
D(A)=X$ and $A\in \mathcal{L}(X)$. Here since $A$ is closed the closed graph
theorem implies that $A$ is bounded.\ 

As a consequence, the general persistence results of Potzsche in \cite%
{Potzsche} cannot directly apply to study the persistence of the splitting
for the class of linear unbounded operators.

In this work we propose to revisit the problem of persistence of exponential
trichotomy for the class of operators described in Definition \ref{DE1.1} by
dealing with a direct proof based on Perron-Lyapunov fixed point argument
for the perturbed semiflows and projectors. More specifically if $B\in 
\mathcal{L}\left( X\right) $ (with $\left\Vert B\right\Vert _{\mathcal{L}%
\left( X\right) }$ small enough) we aim at investigating the persistence of
such the state space decomposition for a small bounded linear perturbation
of an exponentially trichotomic closed linear operator $A:D(A)\subset X\to X$%
.

The main result of the manuscript is the following theorem.

\begin{theorem}
\label{TH1.4}\textbf{(Perturbation) }Let $A:D(A)\subset X\to X$ be a closed
linear operator on a Banach space $X,$ and assume that $A$ has exponential
trichotomy with exponents $\rho_0<\rho$, constant $\kappa$ and associated to
the projectors $\left\{\Pi^\alpha\right\}_{\alpha=s,c,u}$ (see Remark \ref%
{RE1.2}).\ Then for each $B\in \mathcal{L}\left( X\right) $ with $\left\Vert
B\right\Vert _{\mathcal{L}\left( X\right) }$ small enough the closed linear
operator $\left( A+B\right):D(A)\subset X\to X$ has an exponential
trichotomy,\ which corresponds to the following state space decomposition 
\begin{equation*}
X=\widehat{X}_{s}\oplus \widehat{X}_{c}\oplus \widehat{X}_{u},
\end{equation*}%
and which corresponds to the bounded linear projectors $\widehat{\Pi }_{s},%
\widehat{\Pi }_{c},\widehat{\Pi }_{u}\in \mathcal{L}\left( X\right) $
satisfying 
\begin{equation*}
\widehat{X}_{k}:=\widehat{\Pi }_{k}\left( X\right) ,\forall k=s,c,u,
\end{equation*}%
and%
\begin{equation*}
\widehat{X}_{c}\oplus \widehat{X}_{u}=(I-\widehat{\Pi }_{s})(X)\text{, }%
\widehat{X}_{s}\oplus \widehat{X}_{u}=(I-\widehat{\Pi }_{c})(X)\text{ and }%
\widehat{X}_{s}\oplus \widehat{X}_{c}=(I-\widehat{\Pi }_{u})(X).
\end{equation*}%
Moreover precisely, let three constants $\widehat{\rho }_{0},\widehat{\rho }%
\in \left( 0,+\infty \right)$ and $\widehat\kappa$ be given such that 
\begin{equation*}
0<\rho _{0}<\widehat{\rho }_{0}<\widehat{\rho }<\rho \text{ and } \widehat{%
\kappa }>\kappa .
\end{equation*}%
There exists $\delta _{0}=\delta _{0}\left( \rho _{0},\widehat{\rho }_{0},%
\widehat{\rho },\rho ,\kappa ,\widehat{\kappa }\right) \in \left( 0,\sqrt{2}%
-1\right) $ such that for each $\delta \in \left( 0,\frac{\delta _{0}^{2}}{%
\kappa +\delta _{0}}\right) $ if $\left\Vert B\right\Vert _{\mathcal{L}%
\left( X\right) }\leq \delta$, then $\left( A+B\right) $ has an exponential
trichotomy with exponent $\widehat{\rho }_{0}$ and $\widehat{\rho }$ and
with constant $\widehat{\kappa }.\ $

Moreover, the three associated projectors $\widehat{\Pi }_{s},\widehat{\Pi }%
_{c},\widehat{\Pi }_{u}\in \mathcal{L}\left( X\right) $ satisfy%
\begin{equation*}
\left\Vert \widehat{\Pi }_{k}-\Pi _{k}\right\Vert _{\mathcal{L}\left(
X\right) }<\frac{\kappa \delta }{\delta _{0}-\delta }\leq \delta _{0}<\sqrt{2%
}-1,\forall k=s,c,u,
\end{equation*}%
and as a consequence the subspace $\widehat{X}_{k}:=\widehat{\Pi }_{k}\left(
X\right) $ is isomorphic to the subspace $X_{k}=\Pi _{k}\left( X\right) .$

Furthermore the following estimates hold true for each $n\in \mathbb{N},$%
\begin{equation*}
\left\Vert \left( A+B\right) _{s}^{n}\widehat{\Pi }_{s}-A_{s}^{n}\Pi
_{s}\right\Vert _{\mathcal{L}\left( X\right) }\leq \frac{\kappa \delta }{%
\delta _{0}-\delta }e^{-\widehat{\rho }n},
\end{equation*}%
\begin{equation*}
\left\Vert \left( A+B\right) _{u}^{-n}\widehat{\Pi }_{u}-A_{u}^{-n}\Pi
_{u}\right\Vert _{\mathcal{L}\left( X\right) }\leq \frac{\kappa \delta }{%
\delta _{0}-\delta }e^{-\widehat{\rho }n},
\end{equation*}%
and for each $n\in \mathbb{Z}$%
\begin{equation*}
\left\Vert \left( A+B\right) _{c}^{n}\widehat{\Pi }_{c}-A_{c}^{n}\Pi
_{c}\right\Vert _{\mathcal{L}\left( X\right) }\leq \frac{\kappa \delta }{%
\delta _{0}-\delta }e^{\widehat{\rho }_{0}\left\vert n\right\vert }.
\end{equation*}
\end{theorem}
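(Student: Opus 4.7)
\textit{Plan.} I would employ a Lyapunov-Perron formulation directly on weighted sequence spaces. For each of the three components (stable, central, unstable) and each initial datum $\xi\in X_k$, construct an orbit of $(A+B)$ with the appropriate one-sided or two-sided exponential control via a Banach fixed-point argument. The three perturbed subspaces $\widehat X_k$ are then the images of the parametrisations $\xi\mapsto x^\xi(0)$, and the new projectors are read off by inverting the natural gluing map $\Psi:X\to X$ sending $x$ to $J_s\Pi_sx+J_c\Pi_cx+J_u\Pi_ux$.

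\textit{Three fixed-point equations.} Writing $x_{k+1}=Ax_k+Bx_k$ and projecting with $\Pi_s,\Pi_c,\Pi_u$, one integrates each component in the direction yielding summability. For the stable-type orbit with $\Pi_sx_0=\xi\in X_s$, the ansatz is
\begin{equation*}
x_k=A_s^k\xi+\sum_{j=0}^{k-1}A_s^{k-1-j}\Pi_sBx_j-\sum_{j=k}^{\infty}A_c^{k-1-j}\Pi_cBx_j-\sum_{j=k}^{\infty}A_u^{k-1-j}\Pi_uBx_j,
\end{equation*}
seen as a fixed-point equation on
\begin{equation*}
E_s:=\Bigl\{x:\mathbb{N}\to X\,:\,\sup_{k\geq 0}e^{\widehat\rho k}\|x_k\|<\infty\Bigr\}.
\end{equation*}
Analogous formulas on $-\mathbb{N}$ (prescribed $\Pi_ux_0$, weight $e^{-\widehat\rho k}$) and on $\mathbb{Z}$ (prescribed $\Pi_cx_0$, weight $e^{-\widehat\rho_0|k|}$) define the unstable-type and central-type Lyapunov-Perron operators. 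Using the trichotomy bounds (\ref{1.7})-(\ref{1.9}) together with the strict inequalities $\rho_0<\widehat\rho_0$, $\widehat\rho<\rho$, the convolution sums are geometrically summable; the linear part is a strict contraction once $\|B\|_{\mathcal L(X)}\leq\delta$ is chosen at the level required in the statement, and the inhomogeneity is linear in $\xi$ with constant $\kappa$.

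\textit{Projectors and trichotomy estimates.} Let $J_k:X_k\to X$, $\xi\mapsto x^\xi(0)$, denote the three resulting bounded linear maps. Evaluating the Lyapunov-Perron formula at $k=0$ and using the contraction bound gives $J_k=\iota_k+R_k$, where $\iota_k$ is the inclusion $X_k\hookrightarrow X$, $R_k$ has range in $(I-\Pi_k)(X)$ (so $\Pi_kR_k=0$), and $\|R_k\|\leq\kappa\delta/(\delta_0-\delta)$. The operator
\begin{equation*}
\Psi:=J_s\Pi_s+J_c\Pi_c+J_u\Pi_u=I+R_s\Pi_s+R_c\Pi_c+R_u\Pi_u
\end{equation*}
has diagonal blocks equal to the identity in the original decomposition, so for $\delta_0<\sqrt{2}-1$ it is invertible by a Neumann-type argument; the new projectors are $\widehat\Pi_k:=J_k\Pi_k\Psi^{-1}$ and $\|\widehat\Pi_k-\Pi_k\|\leq\kappa\delta/(\delta_0-\delta)$. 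Invariance $(A+B)\widehat X_k\subset\widehat X_k$ and invertibility of $(A+B)$ on $\widehat X_c$ and $\widehat X_u$ follow because the time-shift of $x^\xi$ is again the unique fixed point of the same equation, now starting from $(A+B)x^\xi(0)$. The quantitative trichotomy bounds in the statement come by reading the Lyapunov-Perron identity at time $n$: the difference $(A+B)_k^n\widehat\Pi_k-A_k^n\Pi_k$ is the $B$-convolution residue, which carries the ambient weight together with a $\|B\|$-linear prefactor.

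\textit{Main obstacle.} The delicate point is the global gluing: three independent fixed-point constructions have to be assembled into a genuine topological direct-sum decomposition of the whole space $X$ with bounded projectors. This reduces to the invertibility of $\Psi$, and is precisely where the threshold $\delta_0<\sqrt{2}-1$ enters: it must control the cumulative off-diagonal mass of $R_s\Pi_s+R_c\Pi_c+R_u\Pi_u$, not merely any single $\|R_k\|$. A secondary but essential subtlety lies in the central band: summability of $\sum_{j\geq k}A_c^{k-1-j}\Pi_cBx_j$ against the weight $e^{-\widehat\rho_0|k|}$ requires both the invertibility of $A_c$ and the strict gap $\rho_0<\widehat\rho_0$ prescribed in the statement.
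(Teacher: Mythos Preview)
Your approach is a valid Lyapunov--Perron scheme, but it is organised differently from the paper's. You run three \emph{separate} vector-valued fixed points (one orbit per datum $\xi\in X_k$) and then assemble the pieces via the gluing map $\Psi=J_s\Pi_s+J_c\Pi_c+J_u\Pi_u$, whose invertibility produces the new projectors. The paper instead sets up a \emph{single coupled} fixed point for the operator-valued triple $(E^s,E^u,E^c)$ in the product space $\mathbb{L}_{-\widehat\rho}(\mathbb N,\mathcal L(X))^2\times\mathbb{L}_{\widehat\rho_0}(\mathbb Z,\mathcal L(X))$; the projectors are $\widehat\Pi_k=E^k_0$, and orthogonality/semigroup identities such as $E^s_nE^u_p=0$ and $E^s_nE^s_p=E^s_{n+p}$ are obtained by showing that the relevant differences satisfy the same contractive fixed-point equation with zero forcing. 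Your route is the more classical ``construct invariant graphs then glue'' argument; the paper's route avoids the gluing step entirely and reads the projector algebra off uniqueness.

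Two points deserve more care in your sketch. First, the exact projector bound $\|\widehat\Pi_k-\Pi_k\|\le\kappa\delta/(\delta_0-\delta)$ does not drop out of $\widehat\Pi_k=J_k\Pi_k\Psi^{-1}$ just from $\|R_k\|\le\kappa\delta/(\delta_0-\delta)$: the factor $\Psi^{-1}$ contributes an additional term, and the $\sqrt{2}-1$ threshold in the statement is tied in the paper not to invertibility of a gluing map but to the projector-continuity Lemma (two nearby projectors with $\|\Pi-\widehat\Pi\|<\sqrt{2}-1$ have isomorphic ranges). Second, and more substantively, the unbounded case needs explicit work on the unstable part: one must check that $\widehat X_u\cap D(A)$ is exactly $\Pi_u|_{\widehat X_u}^{-1}(D(A_u))$, that $(A+B)$ maps it into $\widehat X_u$, and that the part $(A+B)_u$ has $0$ in its resolvent set. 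The paper devotes a separate claim to this, writing $(A+B)_u$ as a bounded perturbation of $\widehat\Pi_uA_u\Pi_u$ and using the isomorphism between $\Pi_u(X)$ and $\widehat\Pi_u(X)$; your shift argument gives a candidate inverse but does not by itself establish that the part of $A+B$ in $\widehat X_u$ is closed with bounded inverse.
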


In case of bounded linear operator the above result is a particular case of
the result proved by Potzsche in \cite{Potzsche} and by Pliss and Sell \cite%
{Pliss-Sell} using perturbation of exponential dichotomy for linear skew
product semiflow and shifted operators. For the class of unbounded linear
operator we consider in this work this result is new.

In addition, the above result has some consequence for non-autonomous
discrete time linear equations by using Howland semigroup procedure to
re-formulate such problems as autonomous systems.

In the next subsection we will state some consequences of Theorem \ref{TH1.4}%
. Section 2 is devoted to the proof of Theorem \ref{TH1.4}. Section 3 is
concerned with the application of Theorem \ref{TH1.4} for non-autonomous
dynamical system (see Theorem \ref{TH1.8}). We also refer to Seydi \cite%
{Seydi} for further application in the context random dynamical systems and
shadowing of normally hyperbolic dynamics.\ 

\section{Consequences of Theorem \protect\ref{TH1.4} \ for discrete time
non-autonomous dynamical system}

As mentioned above, exponential trichotomy or dichotomy play an important
role in the study of the asymptotic behaviour of non-autonomous dynamical
systems. Roughly speaking exponential trichotomy generalizes the usual
spectral theory of linear semigroups to linear evolution operators. It
ensures an invariant state space decomposition at each time into three
sub-spaces: a stable, an unstable and central space in which the the
evolution operator has different exponential behaviours. Let $\mathbf{A}%
=\left\{ A_{n}\right\} _{n\in \mathbb{Z}}:\mathbb{Z}\rightarrow \mathcal{L}%
(Y)$ be a given sequence of bounded linear operators on the Banach space $%
(Y,\Vert \;\Vert )$. Consider the linear non-autonomous difference equation 
\begin{equation}
x(n+1)=A_{n}x(n),\text{ for }n\geq m,\;\; x(m)=x_{m}\in Y.  \label{1.11}
\end{equation}%
Let us introduce the discrete evolution semigroup associated to $\mathbf{A}$
defined as the $2-$parameters linear operator on $\Delta _{+}:=\left\{
(n,m)\in \mathbb{Z}^{2}:\;n\geq m\right\} $ by 
\begin{equation*}
U_{\mathbf{A}}\left( n,m\right) := A_{n-1}...A_{m}\text{, \ if }n>m, \text{
and } I_{Y},\text{ if }n=m,
\end{equation*}%
wherein $I_{Y}$ denotes the identity operator in $Y$. In the following we
will always use the notation $n\geq m$ as well as $U_{\mathbf{A}}\left(
n,m\right) $ for the evolution semigroup. Whenever $U_{\mathbf{A}}\left(
m,n\right) $ is considered this will mean that $U_{\mathbf{A}}\left(
n,m\right) $ is invertible and 
\begin{equation*}
U_{\mathbf{A}}\left( m,n\right) =U_{\mathbf{A}}\left( n,m\right) ^{-1}.
\end{equation*}%
Let us observe that $U_{\mathbf{A}}$ satisfies 
\begin{equation*}
U_{\mathbf{A}}\left( n,k\right) U_{\mathbf{A}}\left( k,m\right) =U_{\mathbf{A%
}}\left( n,m\right) \text{ for each }n\geq k\geq m.
\end{equation*}%
Then let us recall the following definition taken from Hale and Lin \cite%
{Hale}.

\begin{definition}[Exponential trichotomy]
\label{DE1.6} Let $\mathbf{A}=\left\{ A_{n}\right\} _{n\in \mathbb{Z}}:%
\mathbb{Z}\rightarrow \mathcal{L}(Y)$ be given. Then $U_{\mathbf{A}}$ has an 
\textbf{exponential trichotomy} (or $\mathbf{A}$ is \textbf{exponentially
trichotomic}) on $\mathbb{Z}$ with constant $\kappa $, exponents $0<\rho
_{0}<\rho $ if there exist three families of projectors $\mathbf{\Pi }%
^{\alpha }=\left\{ \Pi _{n}^{\alpha }\right\} _{n\in \mathbb{Z}}:\mathbb{Z}%
\rightarrow \mathcal{L}(Y)$, with $\alpha =u,s,c$ satisfying the following
properties:

\begin{itemize}
\item[(i)] For all $n\in \mathbb{Z}$ and $\alpha ,\beta \in \left\{
u,s,c\right\} $ we have%
\begin{equation*}
\Pi _{n}^{\alpha }\Pi _{n}^{\beta }=0,\ \text{if }\alpha \neq \beta ,\text{
and } \Pi _{n}^{s}+\Pi _{n}^{u}+\Pi _{n}^{c}=I_Y.
\end{equation*}

\item[(ii)] For all $n,m$ $\in \mathbb{Z}$ with $n\geq m$ we have 
\begin{equation*}
U_{\mathbf{A}}^{\alpha }\left( n,m\right) :=\Pi _{n}^{\alpha }U_{\mathbf{A}%
}\left( n,m\right) =U_{\mathbf{A}}\left( n,m\right) \Pi _{m}^{\alpha },\ 
\text{for }\alpha =u,s,c.
\end{equation*}

\item[(iii)] $U_{\mathbf{A}}^{\alpha }\left( n,m\right) $ is invertible from 
$\Pi _{m}^{\alpha }\left( Y\right) $ into $\Pi _{n}^{\alpha }\left( Y\right) 
$ for all $n\geq m$ in $\mathbb{Z},$ $\alpha =u,c$ and its inverse is
denoted by $U_{\mathbf{A}}^{\alpha }\left( m,n\right) :\Pi _{n}^{\alpha
}\left( Y\right) \rightarrow \Pi _{m}^{\alpha }\left( Y\right) $.

\item[(iv)] For each $y\in Y$ we have for all $n,m\in \mathbb{Z}$%
\begin{equation}
\left\Vert U_{\mathbf{A}}^{c}\left( n,m\right) \Pi _{m}^{c}y\right\Vert \leq
\kappa e^{\rho _{0}\left\vert n-m\right\vert }\left\Vert y\right\Vert ,
\label{1.12}
\end{equation}%
and if $n\geq m$%
\begin{equation}
\left\Vert U_{\mathbf{A}}^{s}\left( n,m\right) \Pi _{m}^{s}y\right\Vert \leq
\kappa e^{-\rho \left( n-m\right) }\left\Vert y\right\Vert ,  \label{1.13}
\end{equation}%
\begin{equation}
\left\Vert U_{\mathbf{A}}^{u}\left( m,n\right) \Pi _{n}^{u}y\right\Vert \leq
\kappa e^{-\rho \left( n-m\right) }\left\Vert y\right\Vert .  \label{1.14}
\end{equation}
\end{itemize}
\end{definition}

Let us observe that the operators $U_{\mathbf{A}}^{\alpha }\left( n,p\right)
\in \mathcal{L}\left( Y\right) ,$ for $n\geq p$ in $\mathbb{Z}$ and $\alpha
=u,s,c$ (resp. $U_{\mathbf{A}}^{\alpha }\left( p,n\right) ,$ for $n\geq p$
in $\mathbb{Z}$ and $\alpha =u,c$) inherit the evolution property of $U_{%
\mathbf{A}}$ that reads as 
\begin{equation*}
U_{\mathbf{A}}^{\alpha }\left( n,p\right) U_{\mathbf{A}}^{\alpha }\left(
p,m\right) =U_{\mathbf{A}}^{\alpha }\left( n,m\right) ,\ \forall n\geq p\geq
m\text{ in }\mathbb{Z}\text{ and }\alpha =u,s,c\text{,}
\end{equation*}%
respectively%
\begin{equation*}
U_{\mathbf{A}}^{\alpha }\left( m,p\right) U_{\mathbf{A}}^{\alpha }\left(
p,n\right) =U_{\mathbf{A}}^{\alpha }\left( m,n\right) ,\ \forall n\geq p\geq
m\text{ in }\mathbb{Z}\text{ and }\alpha =u,c\text{.}
\end{equation*}%
Before stating our result, let us notice that since $\Pi _{n}^{\alpha }=U_{%
\mathbf{A}}^{\alpha }\left( n,n\right) ,$ for $\alpha =u,s,c$ and $n\in 
\mathbb{Z},$ property (\textit{iv)} in Definition \ref{DE1.6} implies that
the projectors are uniformly bounded by the constant $\kappa $.

Using Howland's semigroups like procedure (see Chicone and Latushkin \cite%
{Chicone-Latushkin-1999}), as a consequence of Theorem \ref{TH1.4}, we
obtain the following version for non-autonomous dynamical systems.

\begin{theorem}
\label{TH1.8}\textbf{(Perturbation)} Let $\mathbf{A}:\mathbb{Z}\rightarrow 
\mathcal{L}(Y)$ be given such that $U_{\mathbf{A}}$ has an exponential
trichotomy on $\mathbb{Z}$ with constant $\kappa ,$ exponents $0<\rho
_{0}<\rho $ and associated to the three families of projectors $\left\{ 
\mathbf{\Pi }^{\alpha }:\mathbb{Z}\rightarrow \mathcal{L}(Y)\right\}
_{\alpha =s,c,u}$. Let $\rho _{0}<\widehat{\rho }_{0}<\widehat{\rho }<\rho $
and $\widehat{\kappa }>\kappa $ be given. Then there exists $\delta
_{0}:=\delta _{0}\left( \rho _{0},\widehat{\rho }_{0},\widehat{\rho },\rho
,\kappa ,\widehat{\kappa }\right) \in \left( 0,\sqrt{2}-1\right) $ such that
for each $\delta \in \left( 0,\frac{\delta _{0}^{2}}{\kappa +\delta _{0}}%
\right) $ and each $\mathbf{B}:\mathbb{Z}\rightarrow \mathcal{L}(Y)$ with 
\begin{equation*}
\sup_{n\in \mathbb{Z}}\left\Vert B_{n}\right\Vert \leq \delta ,
\end{equation*}%
the evolution semigroup $U_{\mathbf{A}+\mathbf{B}}$ has an exponential
trichotomy on $\mathbb{Z}$ with constant $\widehat{\kappa },$ exponents $%
\widehat{\rho },\ \widehat{\rho }_{0}$ and projectors $\left\{ \widehat{%
\mathbf{\Pi }}^{\alpha }\right\} _{\alpha =s,c,u}$. For each $n\in \mathbb{Z}
$ and $\alpha =u,s,c,$ the spaces $\mathcal{R}\left( \Pi _{n}^{\alpha
}\right) $ and $\mathcal{R}\left( \widehat{\Pi }_{n}^{\alpha }\right) $ are
isomorphic. Moreover the following perturbation estimates hold true: we have
for all $n\geq p,$%
\begin{equation}
\left\Vert {U}_{\mathbf{A}+\mathbf{B}}^{s}\left( n,p\right) -U_{\mathbf{A}%
}^{s}\left( n,p\right) \right\Vert \leq \frac{\kappa \delta }{\delta
_{0}-\delta }e^{-\widehat{\rho }\left( n-p\right) },  \label{1.15}
\end{equation}%
\begin{equation}
\left\Vert {U}_{\mathbf{A}+\mathbf{B}}^{u}\left( p,n\right) -U_{\mathbf{A}%
}^{u}\left( p,n\right) \right\Vert \leq \frac{\kappa \delta }{\delta
_{0}-\delta }e^{-\widehat{\rho }\left( n-p\right) },  \label{1.16}
\end{equation}%
and for all $\left( n,p\right) \in \mathbb{Z}$%
\begin{equation}
\left\Vert {U}_{\mathbf{A}+\mathbf{B}}^{c}\left( n,p\right) -U_{\mathbf{A}%
}^{c}\left( n,p\right) \right\Vert \leq \frac{\kappa \delta }{\delta
_{0}-\delta }e^{\widehat{\rho }_{0}\left\vert n-p\right\vert }\text{.}
\label{1.17}
\end{equation}
\end{theorem}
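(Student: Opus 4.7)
The plan is to deduce Theorem~\ref{TH1.8} from the autonomous Theorem~\ref{TH1.4} by a Howland-type lifting. Set $X := \ell^{\infty}(\mathbb{Z},Y)$ equipped with the sup norm and introduce the closed operator $\mathcal{A}: D(\mathcal{A}) \subset X \to X$ defined by $(\mathcal{A}\xi)(n) := A_{n-1}\xi(n-1)$, together with the bounded $\mathcal{B} \in \mathcal{L}(X)$ defined by $(\mathcal{B}\xi)(n) := B_{n-1}\xi(n-1)$, whose norm is controlled by $\sup_n \|B_n\| \leq \delta$. A short induction yields
\begin{equation*}
((\mathcal{A}+\mathcal{B})^k\xi)(n) = U_{\mathbf{A}+\mathbf{B}}(n,n-k)\xi(n-k), \qquad k \geq 0.
\end{equation*}
Lift the projectors diagonally via $(\mathcal{P}^\alpha\xi)(n) := \Pi_n^\alpha\xi(n)$: the uniform bound $\sup_n \|\Pi_n^\alpha\|_{\mathcal{L}(Y)} \leq \kappa$ (from (iv) at $n=m$) makes $\mathcal{P}^\alpha$ bounded on $X$; the one-step invariance $\Pi_{n+1}^\alpha A_n = A_n\Pi_n^\alpha$ is equivalent to $\mathcal{A}\mathcal{P}^\alpha = \mathcal{P}^\alpha\mathcal{A}$; and the pointwise estimates (\ref{1.12})--(\ref{1.14}) upgrade verbatim to operator-norm bounds on the restrictions of $\mathcal{A}$ to $\mathcal{P}^\alpha(X)$. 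Hence $\mathcal{A}$ is exponentially trichotomic on $X$ with constants $\kappa,\rho_0,\rho$ in the sense of Remark~\ref{RE1.2}.

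Apply Theorem~\ref{TH1.4} to the pair $(\mathcal{A},\mathcal{B})$: for $\delta \in (0,\delta_0^2/(\kappa+\delta_0))$, it yields perturbed projectors $\widehat{\mathcal{P}}^\alpha \in \mathcal{L}(X)$ commuting with $\mathcal{A}+\mathcal{B}$, with constants $(\widehat{\kappa},\widehat{\rho}_0,\widehat{\rho})$, the bound $\|\widehat{\mathcal{P}}^\alpha - \mathcal{P}^\alpha\|_{\mathcal{L}(X)} \leq \kappa\delta/(\delta_0-\delta)$, and the corresponding iterate perturbation estimates. To descend to the non-autonomous setting, introduce the canonical embeddings $e_n: Y \to X$, $(e_ny)(m) := y$ if $m=n$ and $0$ otherwise, and the evaluations $p_n\xi := \xi(n)$, so $p_ne_n = I_Y$. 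Set
\begin{equation*}
\widehat{Y}_n^\alpha := \{y \in Y : e_ny \in \widehat{\mathcal{P}}^\alpha(X)\}, \qquad \widehat{\Pi}_n^\alpha := p_n\widehat{\mathcal{P}}^\alpha e_n \in \mathcal{L}(Y).
\end{equation*}
Exploiting the iterate identity, $y \in \widehat{Y}_n^s$ if and only if $k \mapsto U_{\mathbf{A}+\mathbf{B}}(n+k,n)y$ decays exponentially at rate $\widehat{\rho}$, with analogous dynamical characterizations of $\widehat{Y}_n^u$ (backward-defined, exponentially decaying orbit, using the invertibility of $(\mathcal{A}+\mathcal{B})_u$) and of $\widehat{Y}_n^c$ (two-sided moderate growth); in particular $\widehat{Y}_n^\alpha = p_n(\widehat{\mathcal{P}}^\alpha(X))$.

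The structural crux is the direct-sum decomposition
\begin{equation*}
Y = \widehat{Y}_n^s \oplus \widehat{Y}_n^c \oplus \widehat{Y}_n^u, \qquad \forall\, n \in \mathbb{Z},
\end{equation*}
from which each $\widehat{\Pi}_n^\alpha$ is identified as a genuine projector onto $\widehat{Y}_n^\alpha$ along the sum of the other two subspaces. The sum follows by decomposing $e_ny = \sum_\alpha \widehat{\mathcal{P}}^\alpha e_ny$ in $X$ and evaluating at coordinate $n$; for directness, any $y \in \widehat{Y}_n^s \cap (\widehat{Y}_n^c + \widehat{Y}_n^u)$ yields $e_ny \in \widehat{\mathcal{P}}^s(X) \cap (\widehat{\mathcal{P}}^c(X) + \widehat{\mathcal{P}}^u(X)) = \{0\}$ by the lifted trichotomy, so $y = 0$, and the other intersections are symmetric. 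Once this is in place, the uniform bound $\|\widehat{\Pi}_n^\alpha\|_{\mathcal{L}(Y)} \leq \widehat{\kappa}$ is immediate from the formula, the invariance $\widehat{\Pi}_{n+1}^\alpha(A_n+B_n) = (A_n+B_n)\widehat{\Pi}_n^\alpha$ follows by applying $\widehat{\mathcal{P}}^\alpha(\mathcal{A}+\mathcal{B}) = (\mathcal{A}+\mathcal{B})\widehat{\mathcal{P}}^\alpha$ to $e_ny$ and reading coordinate $n+1$, the perturbation estimates (\ref{1.15})--(\ref{1.17}) come from applying the lifted iterate bounds to $e_py$ and reading off the appropriate coordinate, and the isomorphism between $\mathcal{R}(\Pi_n^\alpha)$ and $\mathcal{R}(\widehat{\Pi}_n^\alpha)$ follows from $\|\widehat{\Pi}_n^\alpha - \Pi_n^\alpha\|_{\mathcal{L}(Y)} < 1$ by a Neumann-series argument. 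The main obstacle I expect is rigorously establishing the dynamical characterization of $\widehat{Y}_n^\alpha$ on the unstable and central sides, where backward orbits must be constructed and controlled, requiring careful use of the invertibility provided by Theorem~\ref{TH1.4} together with a delicate translation between the uniform-in-time norm on $X$ and pointwise orbit estimates on $Y$.
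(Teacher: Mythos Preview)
Your proposal is correct and follows the same Howland lifting as the paper: both pass to $X=\ell^q(\mathbb Z,Y)$, define $(\mathcal A\xi)_n=A_{n-1}\xi_{n-1}$ and the diagonal projectors $\mathcal P^\alpha$, check that $\mathcal A$ is exponentially trichotomic in the sense of Definition~\ref{DE1.1}, and then invoke Theorem~\ref{TH1.4}. The substantive difference lies in how the perturbed projectors $\widehat{\mathcal P}^\alpha$ on $X$ are brought back down to pointwise projectors $\widehat\Pi_n^\alpha$ on $Y$.

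The paper does this \emph{structurally}: it introduces the coordinate projections $\mathcal D_p$, observes that the unperturbed data satisfies the shift-commutation relations $\mathcal D_p\mathcal A_s^n\mathcal P_s=\mathcal A_s^n\mathcal P_s\mathcal D_{p-n}$ (and analogues), and then proves that the fixed-point operator $\mathcal J$ from the proof of Theorem~\ref{TH1.4} preserves the closed subspace $\mathcal Z$ of sequences obeying these relations. Since $(I-\mathcal J)^{-1}=\sum_k\mathcal J^k$ also preserves $\mathcal Z$, the perturbed triple $\bigl((\mathcal A+\mathcal B)_s^{\cdot}\widehat{\mathcal P}^s,(\mathcal A+\mathcal B)_u^{-\cdot}\widehat{\mathcal P}^u,(\mathcal A+\mathcal B)_c^{\cdot}\widehat{\mathcal P}^c\bigr)$ lies in $\mathcal Z$; in particular $\mathcal D_p\widehat{\mathcal P}^\alpha=\widehat{\mathcal P}^\alpha\mathcal D_p$, so each $\widehat{\mathcal P}^\alpha$ is a diagonal multiplication operator $(\widehat{\mathcal P}^\alpha u)_k=\widehat\Pi_k^\alpha u_k$. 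All the non-autonomous estimates then fall out by inspection. This route exploits the internal machinery of the proof of Theorem~\ref{TH1.4} rather than just its statement.

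Your route is \emph{dynamical}: you treat Theorem~\ref{TH1.4} as a black box and recover the pointwise projectors via Perron-type characterisations of the invariant subspaces (forward decay for $\widehat Y_n^s$, existence of an exponentially decaying backward orbit for $\widehat Y_n^u$, two-sided moderate growth for $\widehat Y_n^c$). The identity $\widehat Y_n^\alpha=p_n(\widehat{\mathcal P}^\alpha(X))$ and the direct-sum $Y=\bigoplus_\alpha\widehat Y_n^\alpha$ do follow from these characterisations exactly as you outline, and the ``read off at the right coordinate'' step for the estimates is legitimate because $((\mathcal A+\mathcal B)^k\xi)(m)=U_{\mathbf A+\mathbf B}(m,m-k)\xi(m-k)$ only involves $\xi$ at a single shifted index. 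The obstacle you flag---constructing and controlling backward orbits for the unstable and central parts---is real but routine once you use the invertibility of $(\mathcal A+\mathcal B)_u$ and $(\mathcal A+\mathcal B)_c$ furnished by Theorem~\ref{TH1.4}, together with the commutation of the projectors with $(\mathcal A+\mathcal B)$. The paper's commutation-subspace argument is slicker because it sidesteps this orbit analysis entirely; your argument is more self-contained and closer in spirit to the classical Perron method.
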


The proof of the above result will be obtained as a consequence of Theorem %
\ref{TH1.4} by using Howland procedure (we refer to the monograph of Chicone
and Latushkin \cite{Chicone-Latushkin-1999} and the references therein). To
be more precise, let $q\in \lbrack 1,\infty ]$ be given and let us introduce
the Banach space $X=l^{q}(\mathbb{Z},Y)$. Let us consider the closed linear
operator $\mathcal{A}:D\left(\mathcal{A}\right)\subset l^{q}(\mathbb{Z}%
;Y)\rightarrow l^{q}(\mathbb{Z};Y)$ defined by 
\begin{equation}  \label{operateur_A}
\begin{split}
&D\left(\mathcal{A}\right)=\left\{u\in X:\;\left\{A_{k}u\right\}_{k\in%
\mathbb{Z}}\in X\right\}, \\
&\left( \mathcal{A}u\right) _{k}=A_{k-1}u_{k-1},\;\;\forall k\in \mathbb{Z}%
,\;\forall u\in l^{q}(\mathbb{Z};Y).
\end{split}%
\end{equation}%
Then we will show if $\mathbf{A}$ has an exponential trichotomy (according
to Definition \ref{DE1.6}) then the linear operator $\mathcal{A}$ also has
an exponential trichotomy (see Definition \ref{DE1.1}). Note that Theorem %
\ref{TH1.8} is not a new result (see P\"{o}tzsche \cite{Potzsche} and the
reference therein). However since we do not assume that the sequence $%
\mathbf{A}$ is uniformly bounded, the Howland evolution operator is not
bounded and therefore our proof of Theorem \ref{TH1.8} is new.

Note that since operator $\left(\mathcal{A},D\left(\mathcal{A}\right)\right)$%
, one can apply Theorem \ref{TH1.4} with general small perturbation $%
\mathcal{B}=\left(B_{i,j}\right)_{(i,j)\in \mathbb{Z}^2}\in\mathcal{L}(X)$
to obtain the peristence of exponential dichotomy or trichotomy for some
advanced and retarded perturbation of \eqref{1.11} of the form: 
\begin{equation*}
x_{n+1}=A_nx_n+\sum_{j\in \mathbb{Z}} B_{n,j}x_j.
\end{equation*}
Of course, in such a setting the perturbed projectors $\widehat{\Pi}^\alpha$
does not a simple form but are "full" matrix operators.

To conclude this section, in view of Definition \ref{DE1.1}, we will
introduce without any proof the new class of sequence of closed linear
operators $\mathbf{A}=\left\{A_n\right\}_{n\in\mathbb{Z}}$ such that the
corresponding Howland evolution operator has an exponential trichotomy and
for which perturbation result (see Theorem \ref{TH1.8}) holds true. To do
let us consider for each $n\in\mathbb{Z}$ a closed linear operator $%
A_n:D(A_n)\subset Y\to Y$. We introduce the following definition:

\begin{definition}
\label{DEF-plus} We say that the sequence $\mathbf{A}=\left\{
A_{n}:D(A_n)\subset Y\to Y\right\} _{n\in \mathbb{Z}}$. $\mathbf{A}$ is 
\textbf{exponentially trichotomic}) on $\mathbb{Z}$ with constant $\kappa $,
exponents $0<\rho _{0}<\rho $ if there exist three families of projectors $%
\mathbf{\Pi }^{\alpha }=\left\{ \Pi _{n}^{\alpha }\right\} _{n\in \mathbb{Z}%
}:\mathbb{Z}\rightarrow \mathcal{L}(Y)$, with $\alpha =u,s,c$ satisfying the
following properties:

\begin{itemize}
\item[(i)] For all $n\in \mathbb{Z}$ and $\alpha ,\beta \in \left\{
u,s,c\right\} $ we have%
\begin{equation*}
\Pi _{n}^{\alpha }\Pi _{n}^{\beta }=0,\ \text{if }\alpha \neq \beta ,\text{
and } \Pi _{n}^{s}+\Pi _{n}^{u}+\Pi _{n}^{c}=I_Y.
\end{equation*}

\item[(ii)] For all $n$ $\in \mathbb{Z}$ we have $D(A_n)=\Pi_n^s(Y)\oplus
\Pi_n^c(Y)\oplus\left(D(A_n)\cap \Pi_n^u(Y)\right)$ and for each $%
\alpha=s,c,u$ $A_n\left(D(A_n)\cap \Pi_n^\alpha(Y)\right)\subset
\Pi_{n+1}^\alpha(Y)$.

\item[(iii)] For each $n\in\mathbb{Z}$ the operator $A_n^c:=\left(A_n%
\right)_{|\Pi_n^c(Y)}$ is invertible from $\Pi_n^c(Y)$ onto $\Pi_{n+1}^c(Y)$
and the operator $A_n^u:D\left(A_n^u\right)\subset \Pi_n^u(Y)\to
\Pi_{n+1}^u(Y)$ is invertible with $\left(A_n^u\right)^{-1}\in \mathcal{L}%
\left(\Pi_{n+1}^u(Y),\Pi_{n}^u(Y)\right)$.\newline
Now for each $n\geq m$ we set for $\alpha=s,c$: 
\begin{equation*}
U^\alpha_{\mathbf{A}}\left( n,m\right) :=
A^\alpha_{n-1}...A^\alpha_{m}\Pi^\alpha_m\text{, \ if }n>m,\text{ and }I_Y%
\text{ if $n=m$},
\end{equation*}%
and for each $n\geq m$ we set for $\alpha=u,c$: 
\begin{equation*}
U^\alpha_{\mathbf{A}}\left( m,n\right) :=
\left(A^\alpha_{m}\right)^{-1}...\left(A^\alpha_{n-1}\right)^{-1}\Pi_n^\alpha%
\text{, \ if }n>m,\text{ and }I_Y\text{ if $n=m$}.
\end{equation*}

\item[(iv)] For each $y\in Y$ we have for all $n,m\in \mathbb{Z}$%
\begin{equation*}
\left\Vert U_{\mathbf{A}}^{c}\left( n,m\right) \Pi _{m}^{c}y\right\Vert \leq
\kappa e^{\rho _{0}\left\vert n-m\right\vert }\left\Vert y\right\Vert ,
\end{equation*}%
and if $n\geq m$%
\begin{equation*}
\left\Vert U_{\mathbf{A}}^{s}\left( n,m\right) \Pi _{m}^{s}y\right\Vert \leq
\kappa e^{-\rho \left( n-m\right) }\left\Vert y\right\Vert ,
\end{equation*}%
\begin{equation*}
\left\Vert U_{\mathbf{A}}^{u}\left( m,n\right) \Pi _{n}^{u}y\right\Vert \leq
\kappa e^{-\rho \left( n-m\right) }\left\Vert y\right\Vert .
\end{equation*}
\end{itemize}
\end{definition}

Using the above definition, one can check that if $\mathbf{A}%
=\left\{A_n:D(A_n)\subset Y\to Y\right\}_{n\in\mathbb{Z}}$ is an
exponentially trichotomic sequence of closed linear operator then the linear
operator $\mathcal{A}:D\left(\mathcal{A}\right)\subset X\to X$ defined by 
\begin{equation*}
\begin{cases}
D\left(\mathcal{A}\right)=\left\{u\in X:\;u_k\in D(A_k),\;\forall k\in%
\mathbb{Z}\text{ and }\left(A_nu_n\right)_{n\in\mathbb{Z}}\in X\right\}, \\ 
\left(\mathcal{A}u\right)_k=A_{k-1}u_{k-1},\;k\in\mathbb{Z},\;u\in D\left(%
\mathcal{A}\right),%
\end{cases}%
\end{equation*}
has an exponential trichotomy according to Definition \ref{DE1.1}.

\section{Proof of Theorem \protect\ref{TH1.4}}

\subsection{A continuity projector lemma}

The following lemma is inspired from \cite[Lemma 4.1]{Bates-Lu-Zeng-1998}.

\begin{lemma}
\label{LE2.1}Let $\Pi :X\rightarrow X$ and $\widehat{\Pi }:X\rightarrow X$
be two bounded linear projectors on a Banach space $X$.\ Assume that 
\begin{equation}
\left\Vert \Pi -\widehat{\Pi }\right\Vert _{\mathcal{L}\left( X\right)
}<\delta \text{ with }0<\delta <\sqrt{2}-1,  \label{2.1}
\end{equation}%
Then $\Pi $ is invertible from $\widehat{\Pi }\left( X\right) $ into $\Pi
\left( X\right) $ and%
\begin{equation}
\left\Vert \left( \Pi |_{\widehat{\Pi }\left( X\right) }\right)
^{-1}x\right\Vert \leq \frac{1}{1-\delta }\left\Vert x\right\Vert ,\ \forall
x\in \Pi \left( X\right) .  \label{2.2}
\end{equation}
\end{lemma}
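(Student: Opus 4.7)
The plan splits naturally into two pieces: injectivity together with the norm bound, and surjectivity onto $\Pi(X)$. The former is essentially a reverse triangle inequality, while the latter is where the actual work happens.

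For $x\in\widehat\Pi(X)$, the identity $x=\widehat\Pi x$ gives
$\Pi x = x - (\widehat\Pi - \Pi)x$,
and the smallness hypothesis \eqref{2.1} combined with the reverse triangle inequality immediately yields $\|\Pi x\|\ge (1-\delta)\|x\|$. This furnishes injectivity of $\Pi|_{\widehat\Pi(X)}$ and, once surjectivity onto $\Pi(X)$ is established, the claimed estimate $\|(\Pi|_{\widehat\Pi(X)})^{-1}\|\le 1/(1-\delta)$.

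The heart of the argument is showing the range of $\Pi|_{\widehat\Pi(X)}$ is all of $\Pi(X)$. I would introduce the auxiliary operator $\Psi:\Pi(X)\to\Pi(X)$ defined by $\Psi y := \Pi\widehat\Pi y$, and prove it is invertible by Neumann series. The crucial algebraic observation is that for every $y\in\Pi(X)$,
$\widehat\Pi(\Pi-\widehat\Pi)y = \widehat\Pi\Pi y - \widehat\Pi^2 y = \widehat\Pi y - \widehat\Pi y = 0$,
so $(\Pi-\widehat\Pi)y$ lies in $\ker\widehat\Pi$. Decomposing $\Pi = \widehat\Pi + (\Pi-\widehat\Pi)$ then yields
$\Pi(\Pi-\widehat\Pi)y = (\Pi-\widehat\Pi)^2 y$,
whose norm is at most $\delta^2\|y\|$. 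Hence $\|\Psi - I\|_{\mathcal{L}(\Pi(X))}\le \delta^2<1$, $\Psi$ is invertible, and for any $y\in\Pi(X)$ the element $x:=\widehat\Pi\,\Psi^{-1}y\in\widehat\Pi(X)$ satisfies $\Pi x = \Psi(\Psi^{-1}y)=y$, giving the required surjectivity.

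The main obstacle is that nothing is assumed about the individual norms of $\Pi$ and $\widehat\Pi$, only about their difference, so a naive factorization $\|\Pi(\Pi-\widehat\Pi)\|\le \|\Pi\|\cdot\delta$ is useless since $\|\Pi\|$ could be arbitrarily large. The algebraic identity above sidesteps this by absorbing one copy of $\Pi-\widehat\Pi$ into $\ker\widehat\Pi$, upgrading the first-order smallness to the quadratic $\delta^2$ and making the estimate insensitive to the projector norms. Note that only $\delta<1$ is genuinely needed for the present lemma; the stronger bound $\delta<\sqrt 2-1$ is retained for convenience in the fixed-point arguments of the subsequent sections.
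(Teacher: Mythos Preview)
Your proof is correct. The overall architecture coincides with the paper's: both arguments establish injectivity and the norm bound via the same reverse triangle inequality, and both obtain surjectivity by showing a composition operator (in your case $\Psi=\Pi\widehat\Pi$ on $\Pi(X)$, in the paper's case $\widehat\Pi\Pi$ on $\widehat\Pi(X)$) is invertible by a Neumann series. The genuine difference lies in how the Neumann estimate is obtained. The paper writes, for $x\in\widehat\Pi(X)$, $x-\widehat\Pi\Pi x=(\widehat\Pi-\Pi)x+(\Pi-\widehat\Pi)\Pi x$ and bounds this crudely by $\delta\|x\|+\delta\|\Pi x\|\le\delta(2+\delta)\|x\|$, which is where the threshold $\delta<\sqrt 2-1$ actually enters. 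Your algebraic observation that $(\Pi-\widehat\Pi)y\in\ker\widehat\Pi$ for $y\in\Pi(X)$ upgrades the estimate to $\|I-\Psi\|\le\delta^2$, so your argument works for any $\delta<1$ and is insensitive to the individual projector norms, as you note. In short: same strategy, but your identity yields a strictly sharper result and shows the $\sqrt 2-1$ hypothesis is not intrinsic to this lemma.
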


\begin{remark}
\label{REM2.2}By symmetry, the bounded linear projector $\widehat{\Pi }$ is
also invertible from $\Pi \left( X\right) $ into $\widehat{\Pi }\left(
X\right) $ and 
\begin{equation}
\left\Vert \left( \widehat{\Pi }|_{\Pi \left( X\right) }\right)
^{-1}x\right\Vert \leq \frac{1}{1-\delta }\left\Vert x\right\Vert ,\ \forall
x\in \widehat{\Pi }\left( X\right)  \label{2.3}
\end{equation}
\end{remark}

\begin{proof}
We will first prove two claims.

\begin{claim}
\label{CLAIM1} If $\widehat{\Pi }\Pi $ is invertible from $\widehat{\Pi }(X)$
into $\widehat{\Pi }\left( X\right) $ then $\widehat{\Pi }$ is onto from $%
\Pi \left( X\right) $ into $\widehat{\Pi }\left( X\right) .$
\end{claim}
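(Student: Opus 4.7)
The plan is to give a direct surjectivity argument: take an arbitrary target $y$ in $\widehat{\Pi}(X)$ and exhibit an explicit preimage lying in $\Pi(X)$ under $\widehat{\Pi}$, using the invertibility of $\widehat{\Pi}\Pi$ on $\widehat{\Pi}(X)$ to produce it.

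More precisely, I would fix $y\in\widehat{\Pi}(X)$. Since the operator $\widehat{\Pi}\Pi:\widehat{\Pi}(X)\to\widehat{\Pi}(X)$ is assumed invertible, there exists some $z\in\widehat{\Pi}(X)$ with
\begin{equation*}
\widehat{\Pi}\Pi z = y.
\end{equation*}
I would then set $x:=\Pi z$, which by definition belongs to $\Pi(X)$, and observe that
\begin{equation*}
\widehat{\Pi}x = \widehat{\Pi}\Pi z = y,
\end{equation*}
so that $y$ lies in the range of the restriction $\widehat{\Pi}|_{\Pi(X)}:\Pi(X)\to\widehat{\Pi}(X)$. Since $y\in\widehat{\Pi}(X)$ was arbitrary, this will establish the claimed surjectivity.

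There is really no obstacle here: the claim is a formal consequence of the factorization $\widehat{\Pi}\Pi=(\widehat{\Pi}|_{\Pi(X)})\circ(\Pi|_{\widehat{\Pi}(X)})$, and surjectivity of the composition forces surjectivity of the left factor. The smallness hypothesis \eqref{2.1} plays no role in this particular claim; it will only be used in the companion step where one verifies that $\widehat{\Pi}\Pi$ is indeed invertible on $\widehat{\Pi}(X)$ (via a Neumann-type series applied to $\widehat{\Pi}-\widehat{\Pi}\Pi=\widehat{\Pi}(\widehat{\Pi}-\Pi)$ restricted to $\widehat{\Pi}(X)$), together with the quantitative bound \eqref{2.2}. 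The claim as stated is purely algebraic.
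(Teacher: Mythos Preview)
Your argument is correct and is essentially identical to the paper's own proof: both pick $y\in\widehat{\Pi}(X)$, use invertibility of $\widehat{\Pi}\Pi$ on $\widehat{\Pi}(X)$ to find a preimage, and then push it through $\Pi$ to land in $\Pi(X)$. Your additional remark about the factorization and the irrelevance of \eqref{2.1} for this purely algebraic step is accurate commentary but not part of the paper's proof.
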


\begin{proof}[Proof of Claim \protect\ref{CLAIM1}]
Let $y\in \widehat{\Pi }\left( X\right)$ be given. Since the map $\widehat{%
\Pi }\Pi $ is invertible on $\widehat{\Pi }\left( X\right) $, there exists a
unique $x\in \widehat{\Pi }\left( X\right) $ such that $\widehat{\Pi }\Pi
x=y $. Therefore by setting $\bar{x}=\Pi x\in \Pi \left( X\right) $ we have $%
\widehat{\Pi }\bar{x}=y$, which implies the surjectivity of $\widehat{\Pi }$
from $\Pi \left( X\right) $ onto $\widehat{\Pi }\left( X\right) .$
\end{proof}

\begin{claim}
\label{CLAIM2} If $\Pi \widehat{\Pi }$ is invertible from $\Pi (X)$ into $%
\Pi (X)$ then $\widehat{\Pi }$ is one to one from $\Pi (X)$ into $\widehat{%
\Pi }(X).$
\end{claim}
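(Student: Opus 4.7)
The plan is to prove injectivity of $\widehat{\Pi}$ restricted to $\Pi(X)$ by a direct kernel argument that mirrors the surjectivity argument of Claim~\ref{CLAIM1}. Specifically, I would take any $x \in \Pi(X)$ with $\widehat{\Pi} x = 0$ and aim to deduce that $x = 0$, which gives the desired one-to-one property of $\widehat{\Pi}\bigl|_{\Pi(X)} \colon \Pi(X) \to \widehat{\Pi}(X)$.

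The key step is to apply $\Pi$ to the identity $\widehat{\Pi} x = 0$: this yields $\Pi \widehat{\Pi} x = 0$. Since by hypothesis the map $\Pi \widehat{\Pi}$ is invertible as a linear map from $\Pi(X)$ into $\Pi(X)$, and the element $x$ belongs to $\Pi(X)$ (so that it lies in the domain on which $\Pi\widehat{\Pi}$ is injective), this forces $x = 0$. That concludes injectivity, so the only verification needed alongside is that $\widehat{\Pi}$ indeed sends $\Pi(X)$ into $\widehat{\Pi}(X)$, which is immediate from the definition of $\widehat{\Pi}$ as a projector onto $\widehat{\Pi}(X)$.

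The whole argument is symmetric to the one for Claim~\ref{CLAIM1}: there one used invertibility of $\widehat{\Pi}\Pi$ on $\widehat{\Pi}(X)$ to pull back a preimage and obtain surjectivity; here one uses invertibility of $\Pi\widehat{\Pi}$ on $\Pi(X)$ to collapse the kernel and obtain injectivity. There is no real obstacle; the only thing to be careful about is the direction of composition (left versus right action of $\Pi$ and $\widehat{\Pi}$), which must be tracked to make sure that the element we produce lies in the correct subspace so that the invertibility hypothesis can be invoked. Once Claims~\ref{CLAIM1} and~\ref{CLAIM2} are both established, the main lemma follows by combining them with a Neumann-series argument showing that $\Pi\widehat{\Pi}$ (respectively $\widehat{\Pi}\Pi$) is invertible on $\Pi(X)$ (respectively $\widehat{\Pi}(X)$) whenever $\|\Pi - \widehat{\Pi}\|_{\mathcal{L}(X)} < \delta < \sqrt{2}-1$, and then reading off the norm estimate \eqref{2.2} from the Neumann series bound $1/(1-\delta)$.
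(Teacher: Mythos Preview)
Your argument is correct and is essentially identical to the paper's own proof: take $x\in\Pi(X)$ with $\widehat{\Pi}x=0$, apply $\Pi$ to get $\Pi\widehat{\Pi}x=0$, and invoke the assumed invertibility of $\Pi\widehat{\Pi}$ on $\Pi(X)$ to conclude $x=0$. The additional remarks about how the claim fits into the proof of the surrounding lemma are accurate as well.
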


\begin{proof}[Proof of Claim \protect\ref{CLAIM2}]
Let $x\in\Pi(X)$ be given such that $\widehat{\Pi}x=0.$ Then we have $\Pi%
\widehat{\Pi}x=0$. Since $\Pi\widehat{\Pi}$ is invertible from $\Pi(X)$ into 
$\Pi(X)$ we deduce that $x=0$ and the claim follows.
\end{proof}

Let us now prove that $\widehat{\Pi}\Pi$ is invertible from $\widehat{\Pi}%
(X) $ into $\widehat{\Pi}(X)$.\ First note that one has 
\begin{equation*}
\widehat{\Pi}\Pi=I-\left( I-\widehat{\Pi}\Pi\right).
\end{equation*}
Hence it is sufficient to prove that 
\begin{equation}
\left\Vert I-\widehat{\Pi}\Pi\right\Vert _{\mathcal{L}\left( \widehat{\Pi }%
(X)\right) }<1.  \label{4}
\end{equation}
Let $x\in\widehat{\Pi}(X)$ be given. Then we have 
\begin{equation*}
x-\widehat{\Pi}\Pi x =\widehat{\Pi}x-\widehat{\Pi}\Pi x=\left[ \widehat{\Pi}%
-\Pi\right] x+\left[ \widehat{\Pi}-\Pi\right] \Pi x.
\end{equation*}
Thus 
\begin{equation*}
\left\Vert x-\widehat{\Pi}\Pi x\right\Vert \leq\left\Vert \widehat{\Pi}%
-\Pi\right\Vert _{\mathcal{L}\left( X\right) }\left\Vert x\right\Vert
+\left\Vert \widehat{\Pi}-\Pi\right\Vert _{\mathcal{L}\left( X\right)
}\left\Vert \Pi x\right\Vert,
\end{equation*}
and 
\begin{equation*}
\left\Vert x-\widehat{\Pi}\Pi x\right\Vert \leq\delta\left\Vert x\right\Vert
+\delta\left\Vert \Pi x\right\Vert .
\end{equation*}
Since $x\in\widehat{\Pi}(X)$ we have%
\begin{equation}  \label{aqzp}
\Pi x =\Pi x-x+x =\Pi x-\widehat{\Pi}x+x,
\end{equation}
hence 
\begin{equation*}
\left\Vert \Pi x\right\Vert \leq \left[\left\Vert \Pi-\widehat{\Pi}%
\right\Vert _{\mathcal{L}\left( X\right) }+1\right]\left\Vert
x\right\Vert\leq \left( 1+\delta\right) \left\Vert x\right\Vert.
\end{equation*}
Then we obtain 
\begin{equation*}
\left\Vert x-\widehat{\Pi}\Pi x\right\Vert \leq \delta\left( 2+\delta\right)
\left\Vert x\right\Vert,\;\;\forall x\in\widehat{\Pi}(X).
\end{equation*}
Recalling that $\delta\in\left( 0,\sqrt{2}-1\right)$, that reads $%
\delta\left( 2+\delta\right) <1$, and we deduce that $\widehat{\Pi}\Pi$ is
invertible from $\widehat{\Pi}(X)$ into $\widehat{\Pi}(X)$. By symmetry it
follows that $\Pi\widehat{\Pi }$ is also invertible from $\Pi(X)$ into $%
\Pi(X)$.

To conclude the proof let us estimate the norm of the inverse of $\Pi |_{%
\widehat{\Pi }\left( X\right) }.$ Let $x\in \widehat{\Pi }(X)$ be given.
From \eqref{aqzp} one has 
\begin{equation*}
\left\Vert \Pi x\right\Vert \geq \left\Vert x\right\Vert -\left\Vert 
\widehat{\Pi }x-\Pi x\right\Vert \geq \left\Vert x\right\Vert -\left\Vert 
\widehat{\Pi }-\Pi \right\Vert _{\mathcal{L}\left( X\right) }\left\Vert
x\right\Vert\geq \left( 1-\delta \right) \left\Vert x\right\Vert ,
\end{equation*}%
and the result follows.\ 
\end{proof}

\subsection{Derivation of the fixed point problem}

In this section we shall derive a fixed point formulation for perturbed
trichotomy. All the computations we will done for bounded linear operator.
However one could remark that the formulations summarized in the lemma below
makes sense for unbounded exponentially trichotomic linear operator as
defined in Definition \ref{DE1.1}. This will be used to prove Theorem \ref%
{TH1.4} for bounded perturbation of unbounded exponentially trichotomic
linear operator.

Recall the discrete time variation of constant formula for bounded linear
operators $A,B\in \mathcal{L}(X)$.\ We have 
\begin{eqnarray*}
\left( A+B\right) ^{n} &=&A\left( A+B\right) ^{n-1}+B\left( A+B\right) ^{n-1}
\\
&=&A^{2}(A+B)^{n-2}+AB\left( A+B\right) ^{n-2}+B\left( A+B\right) ^{n-1}
\end{eqnarray*}%
thus by induction 
\begin{equation}
\left( A+B\right) ^{n}=A^{n}+A^{n-1}B+...+AB\left( A+B\right) ^{n-2}+B\left(
A+B\right) ^{n-1},  \label{2.7}
\end{equation}%
so that for each $n\geq p$ we obtain 
\begin{equation}
\left( A+B\right) ^{n-p}=A^{n-p}+\sum_{m=p}^{n-1}A^{n-m-1}B\left( A+B\right)
^{m-p}.  \label{2.8}
\end{equation}

In the sequel and throughout this work we shall use the following summation
convention: 
\begin{equation*}
\sum_{n}^m=0\text{ if $m<n$}.
\end{equation*}
This notational convention is similar to the one used by Vanderbauwhede in 
\cite{Vanderbauwhede} who specified this using the symbol $\sum^{(+)}$.

Then using the above constant variation formula, one obtains the following
fixed point formulation for a perturbed trichotomic semiflow in the bounded
case:

\begin{lemma}
\label{LE2.2}Let $A\in \mathcal{L}(X)$ be given such that it has an
exponential trichotomy with constant $\kappa ,$ exponents $0<\rho _{0}<\rho $
and associated to the three projectors $\Pi _{k},$ $k=s,c,u$. Let $B\in 
\mathcal{L}(X)$ be given such that $A+B$ has an exponential trichotomy with
constant $\widehat{\kappa },$ exponents $0<\widehat{\rho }_{0}<\widehat{\rho 
}$ such that $\rho _{0}<\widehat{\rho }_{0}<\widehat{\rho }<\rho $ and
associated to the three projectors $\widehat{\Pi }_{k},$ $k=s,c,u$. Then one
has for each $n\in \mathbb{N},$%
\begin{eqnarray}
\left( A+B\right) _{s}^{n}\widehat{\Pi }_{s} &=&A_{s}^{n}\Pi _{s}\widehat{%
\Pi }_{s}+\sum_{m=0}^{n-1}A_{s}^{n-m-1}\Pi _{s}B\left( A+B\right) _{s}^{m}%
\widehat{\Pi }_{s}  \label{2.9} \\
&&-\sum_{m=0}^{+\infty }\left[ A_{u}^{-m-1}\Pi _{u}+A_{c}^{-m-1}\Pi _{c}%
\right] B\left( A+B\right) _{s}^{n+m}\widehat{\Pi }_{s},  \notag
\end{eqnarray}%
\begin{eqnarray}
\left( A+B\right) _{u}^{-n}\widehat{\Pi }_{u} &=&A_{u}^{-n}\Pi _{u}\widehat{%
\Pi }_{u}  \label{2.10} \\
&&-\sum_{m=0}^{n-1}\left[ A_{u}^{-m-1}\Pi _{u}\right] B\left( A+B\right)
_{u}^{m-n}\widehat{\Pi }_{u}  \notag \\
&&+\sum_{m=0}^{+\infty }\left[ A_{s}^{m}\Pi _{s}+A_{c}^{m}\Pi _{c}\right]
B\left( A+B\right) _{u}^{-m-1-n}\widehat{\Pi }_{u},  \notag
\end{eqnarray}%
\begin{eqnarray}
\left( A+B\right) _{c}^{n}\widehat{\Pi }_{c} &=&A_{c}^{n}\Pi _{c}\widehat{%
\Pi }_{c}+\sum_{m=0}^{n-1}A_{c}^{n-m-1}\Pi _{c}B\left( A+B\right) _{c}^{m}%
\widehat{\Pi }_{c}  \label{2.11} \\
&&-\sum_{m=0}^{+\infty }A_{u}^{-m-1}\Pi _{u}B\left( A+B\right) _{c}^{m+n}%
\widehat{\Pi }_{c}  \notag \\
&&+\sum_{m=0}^{+\infty }A_{s}^{m}\Pi _{s}B\left( A+B\right) _{c}^{-m-1+n}%
\widehat{\Pi }_{c}.  \notag
\end{eqnarray}%
\begin{eqnarray}
\left( A+B\right) _{c}^{-n}\widehat{\Pi }_{c} &=&A_{c}^{-n}\Pi _{c}\widehat{%
\Pi }_{c}-\sum_{m=0}^{n-1}A_{c}^{-m-1}\Pi _{c}B\left( A+B\right) _{c}^{m-n}%
\widehat{\Pi }_{c}  \label{2.12} \\
&&-\sum_{m=0}^{+\infty }A_{u}^{-m-1}\Pi _{u}B\left( A+B\right) _{c}^{m-n}%
\widehat{\Pi }_{c}  \notag \\
&&+\sum_{m=0}^{+\infty }A_{s}^{m}\Pi _{s}B\left( A+B\right) _{c}^{-m-1-n}%
\widehat{\Pi }_{c}.  \notag
\end{eqnarray}%
\begin{eqnarray}
\widehat{\Pi }_{s} &=&\Pi _{s}-\sum_{m=0}^{+\infty }A_{s}^{m}\Pi _{s}B\left[
\left( A+B\right) _{u}^{-m-1}\widehat{\Pi }_{u}+\left( A+B\right) _{c}^{-m-1}%
\widehat{\Pi }_{c}\right]  \label{2.13} \\
&&-\sum_{m=0}^{+\infty }\left[ A_{u}^{-m-1}\Pi _{u}+A_{c}^{-m-1}\Pi _{c}%
\right] B\left( A+B\right) _{s}^{m}\widehat{\Pi }_{s},  \notag
\end{eqnarray}%
\begin{eqnarray}
\widehat{\Pi }_{u} &=&\Pi _{u}+\sum_{m=0}^{+\infty }\left[ A_{c}^{m}\Pi
_{c}+A_{s}^{m}\Pi _{s}\right] B\left( A+B\right) _{u}^{-m-1}\widehat{\Pi }%
_{u}  \label{2.14} \\
&&+\sum_{m=0}^{+\infty }A_{u}^{-m-1}\Pi _{u}B\left[ \left( A+B\right)
_{s}^{m}\widehat{\Pi }_{s}+\left( A+B\right) _{c}^{m}\widehat{\Pi }_{c}%
\right] ,  \notag
\end{eqnarray}%
and%
\begin{eqnarray}
\widehat{\Pi }_{c} &=&I-\widehat{\Pi }_{s}-\widehat{\Pi }_{u}  \label{2.15}
\\
&=&\Pi _{c}-\sum_{m=0}^{+\infty }\left[ A_{c}^{m}\Pi _{c}+A_{s}^{m}\Pi _{s}%
\right] B\left( A+B\right) _{u}^{-m-1}\widehat{\Pi }_{s}  \notag \\
&&-\sum_{m=0}^{+\infty }A_{u}^{-m-1}\Pi _{u}B\left[ \left( A+B\right)
_{s}^{m}\widehat{\Pi }_{s}+\left( A+B\right) _{c}^{m}\widehat{\Pi }_{c}%
\right]  \notag \\
&&+\sum_{m=0}^{+\infty }A_{s}^{m}\Pi _{s}B\left[ \left( A+B\right)
_{u}^{-m-1}\widehat{\Pi }_{u}+\left( A+B\right) _{c}^{-m-1}\widehat{\Pi }_{c}%
\right]  \notag \\
&&+\sum_{m=0}^{+\infty }\left[ A_{u}^{-m-1}\Pi _{u}+A_{c}^{-m-1}\Pi _{c}%
\right] B\left( A+B\right) _{s}^{m}\widehat{\Pi }_{s}  \notag
\end{eqnarray}
\end{lemma}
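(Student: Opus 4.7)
My plan has three stages. The first stage is to establish the discrete variation of constants formula (2.8) by induction, starting from $(A+B)^n = A(A+B)^{n-1} + B(A+B)^{n-1}$ and iterating on the first term to get (2.7); a shift of indices then yields (2.8). This is the only algebraic identity needed and holds on all of $X$.

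The second stage is the core of the argument: I derive a series representation for every ``cross-projection'' $\Pi_i \widehat{\Pi}_j$ (and more generally for the sequences $n\mapsto \Pi_i(A+B)_j^n\widehat{\Pi}_j$) whenever $i\neq j$. Fix a subspace label $j\in\{s,c,u\}$, set $\phi_n:=(A+B)_j^n\widehat{\Pi}_j$, and use $(A+B)\phi_n=\phi_{n+1}$, i.e.\ $A\phi_n=\phi_{n+1}-B\phi_n$. Projecting on $X_i$ yields, depending on whether the iteration is to be carried out forward or backward:
\begin{itemize}
\item if $i=s$, the forward iteration $\Pi_s\phi_n = A_s^N\Pi_s\phi_{n+N}-\sum_{m=0}^{N-1}A_s^m\Pi_s B\phi_{n+m+1}$; the boundary term tends to $0$ as $N\to\infty$ because $\|A_s^N\|\leq \kappa e^{-\rho N}$;
\item if $i=u$ (so that $A_u$ is invertible), the backward iteration $\Pi_u\phi_n = A_u^{-N}\Pi_u\phi_{n+N}-\sum_{m=0}^{N-1}A_u^{-m-1}\Pi_u B\phi_{n+m}$; the boundary term vanishes because $\|A_u^{-N}\|\leq \kappa e^{-\rho N}$ and $\|\phi_{n+N}\|$ is at worst $\widehat{\kappa}e^{\widehat{\rho}_0(n+N)}$ with $\widehat{\rho}_0<\rho$;
\item if $i=c$, the same backward iteration using $A_c^{-1}$; the strict ordering $\rho_0<\widehat{\rho}_0<\widehat{\rho}<\rho$ again makes the boundary term vanish.
\end{itemize}
In particular, taking $j=s$ and letting $n\to\infty$ on the recursion $A_i^{\pm N}\Pi_i(A+B)_s^{n+N}\widehat{\Pi}_s\to 0$ (for $i=u,c$) produces the identities
$$\Pi_u\widehat{\Pi}_s = -\sum_{m=0}^{\infty}A_u^{-m-1}\Pi_u B(A+B)_s^m\widehat{\Pi}_s,\quad \Pi_c\widehat{\Pi}_s = -\sum_{m=0}^{\infty}A_c^{-m-1}\Pi_c B(A+B)_s^m\widehat{\Pi}_s,$$
and analogous formulas for $\Pi_s\widehat{\Pi}_c$, $\Pi_s\widehat{\Pi}_u$, $\Pi_u\widehat{\Pi}_c$, $\Pi_c\widehat{\Pi}_u$, each as an absolutely convergent series thanks to the trichotomy bounds for $A$ and $A+B$.

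In the third stage I assemble the target identities. For (2.9), apply the variation of constants (2.8) with $p=0$ to $\widehat{\Pi}_s$, then insert $I=\Pi_s+\Pi_c+\Pi_u$ between $A^{n-m-1}$ and $B$ inside the sum, and expand $A^n\widehat{\Pi}_s$ in the same way. The $\Pi_s$ pieces already match the first two summands of (2.9). The $\Pi_c$ and $\Pi_u$ pieces are of the form $A_c^n\Pi_c\widehat{\Pi}_s + \sum_{m=0}^{n-1}A_c^{n-m-1}\Pi_c B(A+B)_s^m\widehat{\Pi}_s$ (and similarly for $u$); after multiplying by $A_c^{-n}$ (resp.\ $A_u^{-n}$) and using the stage-two formulas for $\Pi_c\widehat{\Pi}_s$ and $\Pi_u\widehat{\Pi}_s$, a reindexation $k=m-n$ turns them into exactly the infinite tail sum on the last line of (2.9). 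The derivations of (2.10)--(2.12) are the same mutatis mutandis, choosing which direction of iteration (forward or backward) to use according to which part of the trichotomy dominates. For (2.13)--(2.15) I write $\widehat{\Pi}_s = \Pi_s\widehat{\Pi}_s+\Pi_c\widehat{\Pi}_s+\Pi_u\widehat{\Pi}_s$ and $\Pi_s\widehat{\Pi}_s = \Pi_s-\Pi_s\widehat{\Pi}_c-\Pi_s\widehat{\Pi}_u$ (and analogously for $\widehat{\Pi}_u$, $\widehat{\Pi}_c$) and substitute the six cross-projection series from stage two.

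The main technical obstacle is the bookkeeping: each cross-projection identity must be derived in the correct iteration direction determined by the relative positions of $i$ and $j$, and convergence of every infinite sum that appears must be checked against the precise strict ordering $0<\rho_0<\widehat{\rho}_0<\widehat{\rho}<\rho$. Once the table of stage-two identities is in place, the final assembly of (2.9)--(2.15) reduces to reindexing, as illustrated above for (2.9).
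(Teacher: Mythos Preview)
Your three-stage plan is exactly the paper's own route: variation of constants, then series formulas for the six cross-projections $\Pi_i\widehat{\Pi}_j$ ($i\neq j$) obtained by projecting and letting the boundary term vanish, and finally the assembly of (2.9)--(2.15) via $\widehat{\Pi}_k=\sum_i\Pi_i\widehat{\Pi}_k$ and $\Pi_k\widehat{\Pi}_k=\Pi_k-\sum_{j\neq k}\Pi_k\widehat{\Pi}_j$.

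There is one genuine slip in your stage two that you should fix. For $i=s$ you wrote the ``forward iteration'' $\Pi_s\phi_n=A_s^N\Pi_s\phi_{n+N}-\sum_{m=0}^{N-1}A_s^m\Pi_sB\phi_{n+m+1}$. This identity cannot hold as stated: it would require inverting $A_s$, which is not assumed. From $A_s\Pi_s\phi_n=\Pi_s\phi_{n+1}-\Pi_sB\phi_n$ one can only iterate \emph{backward} in the $\phi$-index, obtaining
\[
\Pi_s\phi_n=A_s^N\Pi_s\phi_{n-N}+\sum_{m=0}^{N-1}A_s^m\Pi_sB\,\phi_{n-1-m},
\]
which is legitimate precisely because for the relevant cases $j\in\{c,u\}$ the sequence $\phi_n=(A+B)_j^n\widehat{\Pi}_j$ extends to all $n\in\mathbb{Z}$. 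Moreover, your stated convergence reason (``$\|A_s^N\|\le\kappa e^{-\rho N}$'') is insufficient for the forward formula when $j=u$, since $(A+B)_u^{n+N}$ has no exponential bound forward in time; with the backward formula the boundary term $A_s^N\Pi_s(A+B)_j^{n-N}\widehat{\Pi}_j$ is controlled by $\kappa e^{-\rho N}\cdot\widehat{\kappa}e^{-\widehat{\rho}(N-n)}$ (for $j=u$) or $\kappa e^{-\rho N}\cdot\widehat{\kappa}e^{\widehat{\rho}_0(N-n)}$ (for $j=c$), both of which tend to $0$. With this correction your stage two matches the paper's derivations (2.23)--(2.26) exactly, and the rest of your outline goes through.
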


\begin{proof}
\textbf{Derivation of formula (\ref{2.13}) for }$\widehat{\Pi }_{s}$\textbf{%
: }By applying $\widehat{\Pi }_{s}$ on the right side of (\ref{2.8}) we
obtain%
\begin{equation}
\left( A+B\right) _{s}^{n-p}\widehat{\Pi }_{s}=A^{n-p}\widehat{\Pi }%
_{s}+\sum_{m=p}^{n-1}A^{n-m-1}B\left( A+B\right) _{s}^{m-p}\widehat{\Pi }%
_{s},\ \forall n\geq p.  \label{2.16}
\end{equation}%
By fixing $p=0$ and by applying $A_{u}^{-n}\Pi _{u}$ on the left side of the
above formula we obtain%
\begin{equation}
A_{u}^{-n}\Pi _{u}\left( A+B\right) _{s}^{n}\widehat{\Pi }_{s}=\Pi _{u}%
\widehat{\Pi }_{s}+\sum_{m=0}^{n-1}A_{u}^{-m-1}\Pi _{u}B\left( A+B\right)
_{s}^{m}\widehat{\Pi }_{s}.  \label{2.17}
\end{equation}%
Since for each $n\geq 0$ one has 
\begin{equation*}
\left\Vert A_{u}^{-n}\Pi _{u}\right\Vert _{\mathcal{L}\left( X\right) }\leq
\kappa e^{-\rho n}\left\Vert \Pi _{u}\right\Vert _{\mathcal{L}\left(
X\right) }\text{ and } \left\Vert \left( A+B\right) _{s}^{n}\widehat{\Pi }%
_{s}\right\Vert _{\mathcal{L}\left( X\right) }\leq \kappa e^{-\widehat{\rho }%
n}\left\Vert \widehat{\Pi }_{s}\right\Vert _{\mathcal{L}\left( X\right) },
\end{equation*}%
by letting $n$ goes to $+\infty $ in (\ref{2.17}) it follows that%
\begin{equation}
\Pi _{u}\widehat{\Pi }_{s}=-\sum_{m=0}^{+\infty }A_{u}^{-m-1}\Pi _{u}B\left(
A+B\right) _{s}^{m}\widehat{\Pi }_{s}\text{.}  \label{2.18}
\end{equation}%
By fixing $p=0$ and by applying $A_{c}^{-n}\Pi _{c}$ (instead of $%
A_{u}^{-n}\Pi _{u}$) on the left side of (\ref{2.16}) we obtain 
\begin{equation}
\Pi _{c}\widehat{\Pi }_{s}=-\sum_{m=0}^{+\infty }A_{c}^{-m-1}\Pi _{c}B\left(
A+B\right) _{s}^{m}\widehat{\Pi }_{s}\text{.}  \label{2.19}
\end{equation}%
Then combining (\ref{2.18}) and (\ref{2.19}) leads us to 
\begin{equation}
\begin{array}{l}
\widehat{\Pi }_{s}=\Pi _{s}\widehat{\Pi }_{s}+\Pi _{u}\widehat{\Pi }_{s}+\Pi
_{c}\widehat{\Pi }_{s} \\ 
=\Pi _{s}\widehat{\Pi }_{s}-\overset{+\infty }{\underset{m=0}{\sum }}\left[
A_{u}^{-m-1}\Pi _{u}+A_{c}^{-m-1}\Pi _{c}\right] B\left( A+B\right) _{s}^{m}%
\widehat{\Pi }_{s}.%
\end{array}
\label{2.20}
\end{equation}%
It thus remains to reformulate $\Pi _{s}\widehat{\Pi }_{s}$ by using 
\begin{equation*}
\Pi _{s}\widehat{\Pi }_{s}=\Pi _{s}\left[ I-\widehat{\Pi }_{u}-\widehat{\Pi }%
_{c}\right] .
\end{equation*}%
Therefore we will compute $\Pi _{s}\widehat{\Pi }_{u}$ and $\Pi _{s}\widehat{%
\Pi }_{c}.$

\noindent \textbf{Computation of }$\Pi _{s}\widehat{\Pi }_{u}$\textbf{: }By
applying $\widehat{\Pi }_{u}$ on the right side of (\ref{2.8}) we have 
\begin{equation}
\left( A+B\right) _{u}^{n-p}\widehat{\Pi }_{u}=A^{n-p}\widehat{\Pi }%
_{u}+\sum_{m=p}^{n-1}A^{n-m-1}B\left( A+B\right) _{u}^{m-p}\widehat{\Pi }%
_{u},\ \forall n\geq p.  \label{2.21}
\end{equation}%
By applying $\Pi _{s}$ on the left side of the above formula we obtain 
\begin{equation}
\Pi _{s}\left( A+B\right) _{u}^{n-p}\widehat{\Pi }_{u}=A_{s}^{n-p}\Pi _{s}%
\widehat{\Pi }_{u}+\sum_{m=p}^{n-1}A_{s}^{n-m-1}\Pi _{s}B\left( A+B\right)
_{u}^{m-p}\widehat{\Pi }_{u},\ \forall n\geq p,  \label{2.22}
\end{equation}%
and by applying $\left( A+B\right) _{u}^{p-n}\widehat{\Pi }_{u}$ \ on the
right side of (\ref{2.22}) we have 
\begin{equation}
\Pi _{s}\widehat{\Pi }_{u}=A_{s}^{n-p}\Pi _{s}\left( A+B\right) _{u}^{p-n}%
\widehat{\Pi }_{u}+\sum_{m=p}^{n-1}A_{s}^{n-m-1}\Pi _{s}B\left( A+B\right)
_{u}^{m-n}\widehat{\Pi }_{u},\ \forall n\geq p,  \label{2.23}
\end{equation}%
and since 
\begin{equation*}
\left\Vert A_{s}^{n-p}\Pi _{s}\right\Vert _{\mathcal{L}\left( X\right) }\leq
\kappa e^{-\rho \left( n-p\right) }\left\Vert \Pi _{c}\right\Vert _{\mathcal{%
L}\left( X\right) }
\end{equation*}%
and 
\begin{equation*}
\left\Vert \left( A+B\right) _{u}^{p-n}\widehat{\Pi }_{u}\right\Vert _{%
\mathcal{L}\left( X\right) }\leq \kappa e^{-\widehat{\rho }\left( n-p\right)
}\left\Vert \widehat{\Pi }_{u}\right\Vert _{\mathcal{L}\left( X\right) },
\end{equation*}%
by taking the limit when $p$ goes to $-\infty $ in (\ref{2.23}) yields 
\begin{equation}
\Pi _{s}\widehat{\Pi }_{u}=\sum_{m=0}^{+\infty }A_{s}^{m}\Pi _{s}B\left(
A+B\right) _{u}^{-m-1}\widehat{\Pi }_{u}.  \label{2.24}
\end{equation}%
\noindent \textbf{Computation of }$\Pi _{s}\widehat{\Pi }_{c}$\textbf{: }%
Starting from the equality\textbf{\ }%
\begin{equation*}
\left( A+B\right) _{c}^{n-p}\widehat{\Pi }_{c}=A^{n-p}\widehat{\Pi }%
_{c}+\sum_{m=p}^{n-1}A^{n-m-1}B\left( A+B\right) _{c}^{m-p}\widehat{\Pi }%
_{c},\ \forall n\geq p.
\end{equation*}%
and applying $\left( A+B\right) _{c}^{p-n}\widehat{\Pi }_{u}$ on the right
side of this formula we obtain for each $n\geq p$%
\begin{equation}
\Pi _{s}\widehat{\Pi }_{c}=A_{s}^{n-p}\Pi _{s}\left( A+B\right) _{c}^{p-n}%
\widehat{\Pi }_{u}+\sum_{k=p}^{n-1}A_{s}^{n-m-1}\Pi _{s}B\left( A+B\right)
_{c}^{m-n}\widehat{\Pi }_{c},\ \forall n\geq p.  \label{2.25}
\end{equation}%
and since 
\begin{equation*}
\left\Vert A_{s}^{n-p}\Pi _{s}\right\Vert _{\mathcal{L}\left( X\right) }\leq
\kappa e^{-\rho \left( n-p\right) }\left\Vert \Pi _{c}\right\Vert _{\mathcal{%
L}\left( X\right) }\text{,}
\end{equation*}%
and%
\begin{equation*}
\left\Vert \left( A+B\right) _{u}^{p-n}\widehat{\Pi }_{u}\right\Vert _{%
\mathcal{L}\left( X\right) }\leq \kappa e^{\widehat{\rho }_{0}\left(
n-p\right) }\left\Vert \widehat{\Pi }_{u}\right\Vert _{\mathcal{L}\left(
X\right) },
\end{equation*}%
with $\widehat{\rho }_{0}<\rho ,$ by letting $p$ goes to $-\infty $ into (%
\ref{2.25}) we derive 
\begin{equation}
\Pi _{s}\widehat{\Pi }_{c}=\sum_{m=0}^{+\infty }A_{s}^{m}\Pi _{s}B\left(
A+B\right) _{c}^{-m-1}\widehat{\Pi }_{c}.  \label{2.26}
\end{equation}%
\noindent \textbf{Computation of }$\Pi _{s}\widehat{\Pi }_{s}$\textbf{: }By
summing (\ref{2.24}) and (\ref{2.26}) it follows that 
\begin{equation}
\Pi _{s}\left[ \widehat{\Pi }_{c}+\widehat{\Pi }_{u}\right]
=\sum_{m=0}^{+\infty }A_{s}^{m}\Pi _{s}B\left[ \left( A+B\right) _{c}^{-m-1}%
\widehat{\Pi }_{c}+\left( A+B\right) _{u}^{-m-1}\widehat{\Pi }_{u}\right] ,
\label{2.27}
\end{equation}%
and since $\widehat{\Pi }_{c}+\widehat{\Pi }_{u}=I-\widehat{\Pi }_{s}$ it
follows that 
\begin{equation}
\Pi _{s}\widehat{\Pi }_{s}=\Pi _{s}-\sum_{m=0}^{+\infty }A_{s}^{m}\Pi _{s}B%
\left[ \left( A+B\right) _{c}^{-m-1}\widehat{\Pi }_{c}+\left( A+B\right)
_{u}^{-m-1}\widehat{\Pi }_{u}\right] .  \label{2.28}
\end{equation}%
Finally the expression of $\widehat{\Pi }_{s}$ in (\ref{2.13}) follows by
combining (\ref{2.20}) and (\ref{2.28}).

\noindent \textbf{Computation of }$\widehat{\Pi }_{u}$\textbf{\ and }$%
\widehat{\Pi }_{c}$\textbf{: }The derivation of the formula (\ref{2.14}) for 
$\widehat{\Pi }_{u}$ uses the same arguments as for $\widehat{\Pi }_{s}$.\
The formula (\ref{2.15}) for $\widehat{\Pi }_{c}$ is obtained by using $%
\widehat{\Pi }_{c}=I-\widehat{\Pi }_{s}-\widehat{\Pi }_{u}.$

\noindent \textbf{Computation of }$\left( A+B\right) _{s}^{n}\widehat{\Pi }%
_{s}$\textbf{: }Next we derive (\ref{2.9}). By applying $\left( A+B\right)
_{s}^{n}\widehat{\Pi }_{s}$ on the right side of (\ref{2.13}) we obtain 
\begin{equation}
\left( A+B\right) _{s}^{n}\widehat{\Pi }_{s}=\Pi _{s}\left( A+B\right)
_{s}^{n}\widehat{\Pi }_{s}-\sum_{m=0}^{+\infty }\left[
A_{u}^{-m-1}+A_{c}^{-m-1}\right] B\left( A+B\right) _{s}^{m+n}\widehat{\Pi }%
_{s}.  \label{2.29}
\end{equation}%
In order to determine $\Pi _{s}\left( A+B\right) _{s}^{n}\widehat{\Pi }_{s}$%
, we apply $\Pi _{s}$ on the left side of (\ref{2.16}) and we obtain 
\begin{equation}
\Pi _{s}\left( A+B\right) _{s}^{n}\widehat{\Pi }_{s}=A_{s}^{n-p}\Pi _{s}%
\widehat{\Pi }_{s}+\sum_{m=0}^{n-1}A_{s}^{n-m-1}\Pi _{s}B\left( A+B\right)
_{s}^{m}\widehat{\Pi }_{s},\ \forall n\in \mathbb{N},  \label{2.30}
\end{equation}%
\newline
and (\ref{2.9}) follows.

\noindent \textbf{Computation of }$\left( A+B\right) _{c}^{n}\widehat{\Pi }%
_{c}$\textbf{\ for }$n\geq 0$\textbf{: }By applying $\left( A+B\right)
_{c}^{n}\widehat{\Pi }_{c}$ on the right side of (\ref{2.15}) we obtain for
each $n\in \mathbb{N}$%
\begin{eqnarray}
\left( A+B\right) _{c}^{n}\widehat{\Pi }_{c} &=&\Pi _{c}\left( A+B\right)
_{c}^{n}\widehat{\Pi }_{c}-\sum_{m=0}^{+\infty }A_{u}^{-m-1}\Pi _{u}B\left(
A+B\right) _{c}^{m+n}\widehat{\Pi }_{c}  \label{2.31} \\
&&+\sum_{m=0}^{+\infty }A_{s}^{m}\Pi _{s}B\left( A+B\right) _{c}^{-m-1+n}%
\widehat{\Pi }_{c}.  \notag
\end{eqnarray}%
Next we compute $\Pi _{c}\left( A+B\right) _{c}^{n}\widehat{\Pi }_{c}$.\ By
using the variation of constant formula (\ref{2.8}) with $p=0$, and applying 
$\widehat{\Pi }_{c}$ on the right side and $\Pi _{c}$ on the left side we
obtain 
\begin{equation}
\Pi _{c}\left( A+B\right) _{c}^{n}\widehat{\Pi }_{c}=A_{c}^{n}\Pi _{c}%
\widehat{\Pi }_{c}+\sum_{m=0}^{n-1}A_{c}^{n-m-1}\Pi _{c}B\left( A+B\right)
_{c}^{m}\widehat{\Pi }_{c},  \label{2.32}
\end{equation}%
and (\ref{2.11}) follows.

\noindent \textbf{Computation of }$\left( A+B\right) _{c}^{n}\widehat{\Pi }%
_{c}$ \textbf{for }$n\leq 0$\textbf{: }By applying $\left( A+B\right)
_{c}^{-n}\widehat{\Pi }_{c}$ on the right side of (\ref{2.15}) we obtain%
\begin{eqnarray}
\left( A+B\right) _{c}^{-n}\widehat{\Pi }_{c} &=&\Pi _{c}\left( A+B\right)
_{c}^{-n}\widehat{\Pi }_{c}  \label{2.33} \\
&&-\sum_{m=0}^{+\infty }A_{u}^{-m-1}\Pi _{u}B\left( A+B\right) _{c}^{m-n}%
\widehat{\Pi }_{c}  \notag \\
&&+\sum_{m=0}^{+\infty }A_{s}^{m}\Pi _{s}B\left( A+B\right) _{c}^{-m-1-n}%
\widehat{\Pi }_{c}.  \notag
\end{eqnarray}%
Next we compute $\Pi _{c}\left( A+B\right) _{c}^{-n}\widehat{\Pi }_{c}$.\ By
applying $\left( A+B\right) _{c}^{-n}\widehat{\Pi }_{c}$ on the right side
of the variation of constant formula (\ref{2.8}) (with $p=0$) we obtain%
\begin{equation}
\widehat{\Pi }_{c}=A^{n}\left( A+B\right) _{c}^{-n}\widehat{\Pi }%
_{c}+\sum_{m=0}^{n-1}A^{n-m-1}B\left( A+B\right) _{c}^{m-n}\widehat{\Pi }%
_{c}.  \label{2.34}
\end{equation}%
By applying $A_{c}^{-n}\Pi _{c}$ on the left side of the above formula we
get 
\begin{equation}
A_{c}^{-n}\Pi _{c}\widehat{\Pi }_{c}=\Pi _{c}\left( A+B\right) _{c}^{-n}%
\widehat{\Pi }_{c}+\sum_{m=0}^{n-1}A_{c}^{-m-1}\Pi _{c}B\left( A+B\right)
_{c}^{m-n}\widehat{\Pi }_{c},  \label{2.35}
\end{equation}%
and the result follows.
\end{proof}

\subsection{Abstract reformulation of the fixed point problem}

In this section we reformulate the fixed point problem (\ref{2.9})-(\ref%
{2.14}) by using an abstract fixed point formulation.

Let $\eta >0$ be given. Define 
\begin{equation*}
\mathbb{L}_{-\eta }\left( \mathbb{N},\mathcal{L}\left( X\right) \right)
:=\left\{ \mathbf{u}:\mathbb{N}\rightarrow \mathcal{L}\left( X\right)
:\sup_{n\in \mathbb{N}}e^{\eta n}\left\Vert u_{n}\right\Vert <+\infty
\right\} ,
\end{equation*}%
which is a Banach space endowed with the norm 
\begin{equation*}
\left\Vert \mathbf{u}\right\Vert _{\mathbb{L}_{-\eta }}:=\sup_{n\in \mathbb{N%
}}e^{\eta n}\left\Vert u_{n}\right\Vert .
\end{equation*}%
Define 
\begin{equation*}
\mathbb{L}_{\eta }\left( \mathbb{Z},\mathcal{L}\left( X\right) \right)
:=\left\{ \mathbf{v}:\mathbb{Z}\rightarrow \mathcal{L}\left( X\right)
:\sup_{n\in \mathbb{Z}}e^{-\eta \left\vert n\right\vert }\left\Vert
v_{n}\right\Vert _{\mathcal{L}\left( X\right) }<+\infty \right\}
\end{equation*}%
which is a Banach space endowed with the norm 
\begin{equation*}
\left\Vert \mathbf{v}\right\Vert _{\mathbb{L}_{\eta }}:=\sup_{n\in \mathbb{Z}%
}e^{-\eta \left\vert n\right\vert }\left\Vert v_{n}\right\Vert _{\mathcal{L}%
\left( X\right) }.
\end{equation*}%
Consider $S_{-}$ the shift operators on $\mathbb{L}_{\pm \eta }\left( 
\mathbb{N},\mathcal{L}\left( X\right) \right) $ 
\begin{equation*}
S_{-}\left( \mathbf{u}\right) _{n}=u_{n+1}\text{ whenever }n\in \mathbb{Z}%
\text{ or }n\in \mathbb{N}\text{.}
\end{equation*}%
Let $C\in \mathcal{L}\left( X\right) $. In the following we will use the
linear operators 
\begin{equation*}
\Phi _{C}(\mathbf{u})_{n}=\sum_{m=0}^{n-1}C^{m}u_{n-1-m},\text{ and }\Theta
_{C}(\mathbf{u})_{n}=\sum_{m=0}^{+\infty }C^{m}u_{n+m}.
\end{equation*}%
\noindent \textbf{Reformulation of equation (\ref{2.9}) on }$\widehat{X}_{s}$%
\textbf{:} Set for each $n\in\mathbb{N}$: $E_{n}^{s}:=\left( A+B\right)
_{s}^{n}\widehat{\Pi }_{s}$. We require $E^{s}\in \mathbb{L}_{-\widehat{\rho 
}}\left( \mathbb{N},\mathcal{L}\left( X\right) \right)$, where $\widehat{%
\rho }$ is the constant introduced in Theorem \ref{TH1.4}. Consider the
linear operators $\Phi _{s},\;\Theta _{cu}:\mathbb{L}_{-\widehat{\rho }%
}\left( \mathbb{N},\mathcal{L}\left( X\right) \right) \rightarrow \mathbb{L}%
_{-\widehat{\rho }}\left( \mathbb{N},\mathcal{L}\left( X\right) \right) $
defined by 
\begin{equation*}
\Phi _{s}=\Phi _{A_{s}\Pi _{s}} \text{ and } \Theta _{cu}=\Theta _{\left(
A_{c}^{-1}\Pi _{c}+A_{u}^{-1}\Pi _{u}\right) }.
\end{equation*}%
We observe that 
\begin{equation*}
\Phi _{s}\circ B\circ \left( E^{s}\right) _{n}=\sum_{l=0}^{n-1}A_{s}^{l}\Pi
_{s}BE_{n}^{s}\left( A+B\right) _{s}^{n-1-l}\widehat{\Pi }%
_{s}=\sum_{m=0}^{n-1}A_{s}^{n-m-1}\Pi _{s}B\left( A+B\right) _{s}^{m}%
\widehat{\Pi }_{s}
\end{equation*}%
therefore the equation (\ref{2.9}) can be rewritten for $n\in \mathbb{N}$ as 
\begin{equation}
E_{n}^{s}=A_{s}^{n}\Pi _{s}\widehat{\Pi }_{s}+\Phi _{s}\circ B\circ \left(
E^{s}\right) _{n}-\Theta _{cu}(\left( A_{u}^{-1}\Pi _{u}+A_{c}^{-1}\Pi
_{c}\right) BE^{s})_{n}.  \label{2.36}
\end{equation}%
In order to solve the fixed point problem we will use the following lemma.\ 

\begin{lemma}
\label{LE2.3}The operators $\Phi _{s}$ and $\Theta _{cu}$ map $\mathbb{L}_{-%
\widehat{\rho }}\left( \mathbb{N},\mathcal{L}\left( X\right) \right) $ into
itself and are bounded linear operators on $\mathbb{L}_{-\widehat{\rho }%
}\left( \mathbb{N},\mathcal{L}\left( X\right) \right) $. More precisely we
have%
\begin{equation*}
\left\Vert \Phi _{s}\left( \mathbf{u}\right) \right\Vert _{\mathbb{L}_{-%
\widehat{\rho }}}\leq \frac{\kappa e^{\widehat{\rho }}}{1-e^{\widehat{\rho }%
-\rho }}\left\Vert \mathbf{u}\right\Vert _{\mathbb{L}_{-\widehat{\rho }}},\
\forall \mathbf{u}\in \mathbb{L}_{-\widehat{\rho }}\left( \mathbb{N},%
\mathcal{L}\left( X\right) \right) ,
\end{equation*}%
and%
\begin{equation*}
\left\Vert \Theta _{cu}\left( \mathbf{u}\right) \right\Vert _{\mathbb{L}_{-%
\widehat{\rho }}}\leq \left[ \frac{\kappa }{1-e^{\rho _{0}-\widehat{\rho }}}+%
\frac{\kappa }{1-e^{-\left( \rho +\widehat{\rho }\right) }}\right]
\left\Vert \mathbf{u}\right\Vert _{\mathbb{L}_{-\widehat{\rho }}},\ \forall 
\mathbf{u}\in \mathbb{L}_{-\widehat{\rho }}\left( \mathbb{N},\mathcal{L}%
\left( X\right) \right) .
\end{equation*}
\end{lemma}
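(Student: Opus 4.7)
The plan is a direct coordinate-wise estimate followed by summation of two geometric series. Both $\Phi_s$ and $\Theta_{cu}$ are linear in $\mathbf{u}$, so it suffices to bound $e^{\widehat{\rho} n}\|\Phi_s(\mathbf{u})_n\|$ and $e^{\widehat{\rho} n}\|\Theta_{cu}(\mathbf{u})_n\|$ uniformly in $n\in\mathbb{N}$ by a constant times $\|\mathbf{u}\|_{\mathbb{L}_{-\widehat{\rho}}}$; this simultaneously establishes that the image lies in $\mathbb{L}_{-\widehat{\rho}}\left(\mathbb{N},\mathcal{L}(X)\right)$ and gives the claimed operator norm bound.

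For $\Phi_s$, I would substitute the trichotomy estimate $\|A_s^m \Pi_s\|_{\mathcal{L}(X)} \leq \kappa e^{-\rho m}$ (from Remark \ref{RE1.2}, with any residual projector norm absorbed into $\kappa$) together with $\|u_{n-1-m}\| \leq e^{-\widehat{\rho}(n-1-m)}\|\mathbf{u}\|_{\mathbb{L}_{-\widehat{\rho}}}$ into the defining sum, and then factor out $\kappa e^{-\widehat{\rho}(n-1)}$. What remains is the partial geometric sum $\sum_{m=0}^{n-1} e^{-(\rho-\widehat{\rho})m}$, which is dominated by $1/(1-e^{\widehat{\rho}-\rho})$ because $\widehat{\rho} < \rho$. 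Multiplying through by $e^{\widehat{\rho} n}$ produces precisely the stated constant $\kappa e^{\widehat{\rho}}/(1-e^{\widehat{\rho}-\rho})$.

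For $\Theta_{cu}$, the key preliminary observation is the algebraic identity $(A_c^{-1}\Pi_c + A_u^{-1}\Pi_u)^m = A_c^{-m}\Pi_c + A_u^{-m}\Pi_u$, which follows from $\Pi_c \Pi_u = \Pi_u \Pi_c = 0$ and the fact that each summand preserves its own invariant subspace. I would then apply $\|A_c^{-m}\Pi_c\|_{\mathcal{L}(X)} \leq \kappa e^{\rho_0 m}$ from (\ref{1.7}) and $\|A_u^{-m}\Pi_u\|_{\mathcal{L}(X)} \leq \kappa e^{-\rho m}$ from (\ref{1.9}), combined with $\|u_{n+m}\| \leq e^{-\widehat{\rho}(n+m)}\|\mathbf{u}\|_{\mathbb{L}_{-\widehat{\rho}}}$. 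The defining series then splits into two geometric sums in $m$ with ratios $e^{\rho_0-\widehat{\rho}}$ and $e^{-(\rho+\widehat{\rho})}$; the first converges because $\rho_0 < \widehat{\rho}_0 < \widehat{\rho}$ and the second trivially. Summing and multiplying by $e^{\widehat{\rho} n}$ delivers the stated bound.

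No real obstacle arises beyond keeping indices and exponents straight; the entire argument is a bookkeeping exercise in absolutely convergent geometric summation. The only structural point worth emphasizing is that the convergence of the backward-in-time series defining $\Theta_{cu}$ crucially relies on the ordering $\rho_0 < \widehat{\rho}$, which is precisely what the hypotheses of Theorem \ref{TH1.4} enforce.
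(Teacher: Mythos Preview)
Your proposal is correct and is precisely the direct geometric-series computation the paper tacitly relies on (the paper states Lemma~\ref{LE2.3} without proof, treating it as routine). One small cosmetic point: the identity $(A_c^{-1}\Pi_c+A_u^{-1}\Pi_u)^m=A_c^{-m}\Pi_c+A_u^{-m}\Pi_u$ holds for $m\geq 1$ but at $m=0$ the left side is $I_X$ while the right side is $\Pi_c+\Pi_u=I-\Pi_s$; this is harmless since $\|I\|=1\leq 2\kappa$ (recall $\kappa\geq 1$), so your termwise bound $\kappa e^{\rho_0 m}+\kappa e^{-\rho m}$ still dominates and the final constant is unaffected.
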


\bigskip

\noindent \textbf{Reformulation of equation (\ref{2.10}) on }$\widehat{X}%
_{u} $\textbf{:} Set for each $n\in \mathbb{N}$: $E_{n}^{u}:=\left(
A+B\right) _{u}^{-n}\widehat{\Pi }_{u}$. We require $E^{u}\in \mathbb{L}_{-%
\widehat{\rho }}\left( \mathbb{N},\mathcal{L}\left( X\right) \right)$, where 
$\widehat{\rho }$ is the constant introduced in Theorem \ref{TH1.4}.
Consider the linear operators $\Phi _{u},\;\Theta _{sc}:\mathbb{L}_{-%
\widehat{\rho }}\left( \mathbb{N},\mathcal{L}\left( X\right) \right)
\rightarrow \mathbb{L}_{-\widehat{\rho }}\left( \mathbb{N},\mathcal{L}\left(
X\right) \right) $ 
\begin{equation*}
\Phi _{u}:=\Phi _{A_{u}^{-1}\Pi _{u}} \text{ and } \Theta _{sc}:=\Theta
_{\left( A_{s}\Pi _{s}+A_{c}\Pi _{c}\right) }.
\end{equation*}%
We observe that 
\begin{eqnarray*}
\Phi _{u}\circ \left( A_{u}^{-1}\Pi _{u}B\right) \circ S_{-}\left(
E^{u}\right) _{n} &=&\sum_{m=0}^{n-1}A_{u}^{-m}\Pi _{u}\left( A_{u}^{-1}\Pi
_{u}B\right) E_{n-m}^{u} \\
&=&\sum_{m=0}^{n-1}\left[ A_{u}^{-m-1}\Pi _{u}\right] B\left( A+B\right)
_{u}^{m-n}\widehat{\Pi }_{u}
\end{eqnarray*}%
therefore equation (\ref{2.10}) can be rewritten for each $n\in \mathbb{N}$
as 
\begin{equation}
E_{n}^{u}=A_{u}^{-n}\Pi _{u}\widehat{\Pi }_{u}-\Phi _{u}\circ \left(
A_{u}^{-1}\Pi _{u}B\right) \circ S_{-}\left( E^{u}\right) _{n}+\Theta
_{sc}\circ B\circ S_{-}\left( E^{u}\right) _{n}.  \label{2.37}
\end{equation}

\begin{lemma}
\label{LE2.4}The operators $\Phi _{u}$ and $\Theta _{sc}$ map $\mathbb{L}_{-%
\widehat{\rho }}\left( \mathbb{N},\mathcal{L}\left( X\right) \right) $ into
itself and are bounded linear operators on $\mathbb{L}_{-\widehat{\rho }%
}\left( \mathbb{N},\mathcal{L}\left( X\right) \right) $. More precisely we
have%
\begin{equation*}
\left\Vert \Phi _{u}\left( \mathbf{u}\right) \right\Vert _{\mathbb{L}_{-%
\widehat{\rho }}}\leq \frac{\kappa e^{\widehat{\rho }}}{1-e^{\widehat{\rho }%
-\rho }}\left\Vert \mathbf{u}\right\Vert _{\mathbb{L}_{-\widehat{\rho }}},\
\forall \mathbf{u}\in \mathbb{L}_{-\widehat{\rho }}\left( \mathbb{N},%
\mathcal{L}\left( X\right) \right) ,
\end{equation*}%
and%
\begin{equation*}
\left\Vert \Theta _{sc}\left( \mathbf{u}\right) \right\Vert _{\mathbb{L}_{-%
\widehat{\rho }}}\leq \left[ \frac{\kappa }{1-e^{-\left( \rho +\widehat{\rho 
}\right) }}+\frac{\kappa }{1-e^{\rho _{0}-\widehat{\rho }}}\right]
\left\Vert \mathbf{u}\right\Vert _{\mathbb{L}_{-\widehat{\rho }}},\ \forall 
\mathbf{u}\in \mathbb{L}_{-\widehat{\rho }}\left( \mathbb{N},\mathcal{L}%
\left( X\right) \right) .
\end{equation*}
\end{lemma}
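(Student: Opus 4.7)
The plan is to mirror the strategy of Lemma~\ref{LE2.3}, with the roles of stable and unstable parts exchanged. The whole argument reduces to applying the exponential trichotomy bounds from Remark~\ref{RE1.2} inside elementary geometric series: a single one for $\Phi_u$ and two for $\Theta_{sc}$.

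For $\Phi_u$, my first step is to exploit the invariance of $X_u$ under $A$, which yields the identity $(A_u^{-1}\Pi_u)^m = A_u^{-m}\Pi_u$ on $X$ for every $m\in\mathbb{N}$ (direct induction, using that $\Pi_u$ acts as the identity on $X_u$). Hence
\[
\Phi_u(\mathbf{u})_n = \sum_{m=0}^{n-1} A_u^{-m}\Pi_u\, u_{n-1-m}.
\]
Applying \eqref{1.9} from Remark~\ref{RE1.2} together with the uniform bound $\|\Pi_u\|_{\mathcal{L}(X)} \leq \kappa$ (for the $m=0$ term) gives $\|A_u^{-m}\Pi_u\|_{\mathcal{L}(X)} \leq \kappa e^{-\rho m}$ for every $m\geq 0$. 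Multiplying by $e^{\widehat{\rho} n}$, splitting $e^{\widehat{\rho} n} = e^{\widehat{\rho}(m+1)}\, e^{\widehat{\rho}(n-1-m)}$, and using $e^{\widehat{\rho}(n-1-m)}\|u_{n-1-m}\| \leq \|\mathbf{u}\|_{\mathbb{L}_{-\widehat{\rho}}}$ reduces the bound to a geometric series of ratio $e^{\widehat{\rho}-\rho}<1$, producing exactly the announced constant $\kappa e^{\widehat{\rho}}/(1-e^{\widehat{\rho}-\rho})$.

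For $\Theta_{sc}$, the key algebraic observation is that the projectors $\Pi_s$ and $\Pi_c$ are mutually annihilating and invariant under $A$ on their ranges, so a straightforward induction yields
\[
(A_s\Pi_s + A_c\Pi_c)^m = A_s^m\Pi_s + A_c^m\Pi_c, \qquad m\in\mathbb{N}.
\]
This splits $\Theta_{sc}(\mathbf{u})_n$ into two independent sums. I then apply \eqref{1.7} and \eqref{1.8} from Remark~\ref{RE1.2} to get $\|A_s^m\Pi_s\|_{\mathcal{L}(X)} \leq \kappa e^{-\rho m}$ and $\|A_c^m\Pi_c\|_{\mathcal{L}(X)} \leq \kappa e^{\rho_0 m}$, and use the bound $\|u_{n+m}\| \leq e^{-\widehat{\rho}(n+m)} \|\mathbf{u}\|_{\mathbb{L}_{-\widehat{\rho}}}$ valid for $n,m \geq 0$. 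The two resulting geometric series have ratios $e^{-(\rho+\widehat{\rho})}$ and $e^{\rho_0 - \widehat{\rho}}$, both $<1$ under the standing hypothesis $\rho_0<\widehat{\rho}<\rho$ of Theorem~\ref{TH1.4}, and summing them produces the stated constant.

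I do not anticipate any real obstacle. Once the identity $(A_s\Pi_s + A_c\Pi_c)^m = A_s^m\Pi_s + A_c^m\Pi_c$ is noted, the estimates are routine geometric sums. The only point demanding care is the exponent bookkeeping: $\widehat{\rho}<\rho$ is what makes the $\Phi_u$ series and the stable piece of $\Theta_{sc}$ converge, while $\rho_0<\widehat{\rho}$ is exactly what is needed to tame the central piece of $\Theta_{sc}$; both conditions are guaranteed by the chain $\rho_0<\widehat{\rho}_0<\widehat{\rho}<\rho$.
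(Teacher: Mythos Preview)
Your proof is correct and follows exactly the approach the paper intends: the paper does not spell out the proof of Lemma~\ref{LE2.4} (nor of Lemmas~\ref{LE2.3}, \ref{LE2.5}, \ref{LE2.6}), but the computation you give---expanding the powers $(A_u^{-1}\Pi_u)^m=A_u^{-m}\Pi_u$ and $(A_s\Pi_s+A_c\Pi_c)^m=A_s^m\Pi_s+A_c^m\Pi_c$ via orthogonality of the projectors, then applying the trichotomy bounds \eqref{1.7}--\eqref{1.9} and summing the resulting geometric series---is precisely the routine calculation that produces the stated constants and mirrors the (equally omitted) argument for Lemma~\ref{LE2.3}.
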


\bigskip

\noindent \textbf{Reformulation of equation (\ref{2.11})-(\ref{2.12}) on }$%
\widehat{X}_{c}$\textbf{: }Set for each $n\in\mathbb{Z}$: $E_{n}^{c}:=\left(
A+B\right) _{c}^{n}\widehat{\Pi }_{c}$. We require $E^{c}\in \mathbb{L}_{%
\widehat{\rho }_{0}}\left( \mathbb{Z},\mathcal{L}\left( X\right) \right)$.
Define the linear operators 
\begin{equation*}
\Phi _{c}(\mathbf{u})_{n}:=\left\{ 
\begin{array}{l}
\overset{n-1}{\underset{m=0}{\sum }}A_{c}^{n-m-1}\Pi _{c}u_{m},\text{ if }%
n\geq 0 \\ 
-\overset{-n-1}{\underset{m=0}{\sum }}A_{c}^{-m-1}\Pi _{c}u_{m+n},\text{ if }%
n\leq 0%
\end{array}%
\right.
\end{equation*}%
and%
\begin{equation*}
\Theta _{su}(\mathbf{u})_{n}:=-\sum_{m=0}^{+\infty }A_{u}^{-m-1}\Pi
_{u}u_{m+n}+\sum_{m=0}^{+\infty }A_{s}^{m}\Pi _{s}u_{n-1-m},\text{ for }n\in 
\mathbb{Z},
\end{equation*}%
therefore equations (\ref{2.11})-(\ref{2.12}\textbf{)} can be rewritten for
each $n\in \mathbb{Z}$ as%
\begin{equation}
E_{n}^{c}:=A_{c}^{n}\Pi _{c}\left[ I-\left( \widehat{\Pi }_{s}+\widehat{\Pi }%
_{u}\right) \right] +\Phi _{c}(BE^{c})_{n}+\Theta _{su}(BE^{c})_{n}.
\label{2.38}
\end{equation}

\begin{lemma}
\label{LE2.5}The operators $\Phi _{c}$ and $\Theta _{su}$ map $\mathbb{L}_{%
\widehat{\rho }_{0}}\left( \mathbb{Z},\mathcal{L}\left( X\right) \right) $
into itself and are bounded linear operators on $\mathbb{L}_{\widehat{\rho }%
_{0}}\left( \mathbb{Z},\mathcal{L}\left( X\right) \right) $. More precisely
we have%
\begin{equation*}
\left\Vert \Phi _{c}\left( \mathbf{v}\right) \right\Vert _{\mathbb{L}_{%
\widehat{\rho }_{0}}}\leq \frac{\kappa }{1-e^{\rho _{0}-\widehat{\rho }_{0}}}%
\left\Vert \mathbf{v}\right\Vert _{\mathbb{L}_{\widehat{\rho }_{0}}},\
\forall \mathbf{v}\in \mathbb{L}_{\widehat{\rho }_{0}}\left( \mathbb{Z},%
\mathcal{L}\left( X\right) \right) ,
\end{equation*}%
and%
\begin{equation*}
\left\Vert \Theta _{su}\left( \mathbf{v}\right) \right\Vert _{\mathbb{L}_{%
\widehat{\rho }_{0}}}\leq \left[ \frac{\kappa e^{\widehat{\rho }}}{1-e^{%
\widehat{\rho }_{0}-\widehat{\rho }}}+\frac{\kappa e^{-\widehat{\rho }}}{%
1-e^{\widehat{\rho }_{0}-\widehat{\rho }}}\right] \left\Vert \mathbf{v}%
\right\Vert _{\mathbb{L}_{\widehat{\rho }_{0}}},\ \forall \mathbf{v}\in 
\mathbb{L}_{\widehat{\rho }_{0}}\left( \mathbb{Z},\mathcal{L}\left( X\right)
\right) .
\end{equation*}
\end{lemma}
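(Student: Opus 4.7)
The plan is to establish each estimate by a direct pointwise bound. For every $n \in \mathbb{Z}$, I would majorize $\|\Phi_c(\mathbf{v})_n\|_{\mathcal{L}(X)}$ and $\|\Theta_{su}(\mathbf{v})_n\|_{\mathcal{L}(X)}$ by combining three ingredients: the exponential trichotomy bounds (\ref{1.7})--(\ref{1.9}), the definition of the weighted norm which yields $\|v_k\|_{\mathcal{L}(X)} \leq e^{\widehat{\rho}_0 |k|}\|\mathbf{v}\|_{\mathbb{L}_{\widehat{\rho}_0}}$, and the triangle inequality on $\mathbb{Z}$ used to extract the factor $e^{\widehat{\rho}_0 |n|}$ from inside the sums. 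The remaining $m$-sums will then be (finite or infinite) geometric series whose convergence is guaranteed by the strict inequalities $\rho_0 < \widehat{\rho}_0 < \widehat{\rho} < \rho$.

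For $\Phi_c$ I would split the cases $n \geq 0$ and $n \leq 0$. In the first case, (\ref{1.7}) yields $\|A_c^{n-m-1}\Pi_c\|_{\mathcal{L}(X)} \leq \kappa e^{\rho_0(n-m-1)}$ (absorbing $\|\Pi_c\|_{\mathcal{L}(X)}$ into $\kappa$), and substituting $\|v_m\| \leq e^{\widehat{\rho}_0 m}\|\mathbf{v}\|_{\mathbb{L}_{\widehat{\rho}_0}}$ followed by the change of index $j = n-m$ gives
\[
e^{-\widehat{\rho}_0 n}\|\Phi_c(\mathbf{v})_n\|_{\mathcal{L}(X)} \leq \kappa e^{-\rho_0}\|\mathbf{v}\|_{\mathbb{L}_{\widehat{\rho}_0}}\sum_{j=1}^{n} e^{(\rho_0-\widehat{\rho}_0)j},
\]
which is bounded above by $\kappa/(1-e^{\rho_0-\widehat{\rho}_0})$ times $\|\mathbf{v}\|_{\mathbb{L}_{\widehat{\rho}_0}}$ since the tail of the geometric series with ratio $e^{\rho_0-\widehat{\rho}_0}<1$ converges. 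The case $n \leq 0$ is symmetric: one uses (\ref{1.7}) in the form $\|A_c^{-m-1}\Pi_c\| \leq \kappa e^{\rho_0(m+1)}$ on the corresponding finite sum indexed by $m = 0, \ldots, -n-1$, and the same geometric summation yields the same constant.

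For $\Theta_{su}$ I would treat the two infinite sums independently. For the $A_u$-piece $\sum_{m \geq 0} A_u^{-m-1}\Pi_u v_{m+n}$, (\ref{1.9}) gives $\|A_u^{-m-1}\Pi_u\| \leq \kappa e^{-\rho(m+1)}$, and the triangle inequality $|m+n| \leq m + |n|$ lets one factor out $e^{\widehat{\rho}_0 |n|}$, leaving a geometric sum $\sum_{m \geq 0} e^{(\widehat{\rho}_0-\rho)m}$ that converges because $\widehat{\rho}_0 < \widehat{\rho} < \rho$. For the $A_s$-piece $\sum_{m \geq 0} A_s^m \Pi_s v_{n-1-m}$, (\ref{1.8}) gives $\|A_s^m \Pi_s\| \leq \kappa e^{-\rho m}$ and the inequality $|n-1-m| \leq |n|+m+1$ likewise isolates the factor $e^{\widehat{\rho}_0 |n|}$. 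Summing produces bounds of the form $\kappa e^{\pm c}/(1-e^{\widehat{\rho}_0-\rho})$, which are dominated by the stated constants once one invokes the weakenings $e^{-\rho} \leq e^{\widehat{\rho}}$ and $1-e^{\widehat{\rho}_0-\rho} \geq 1-e^{\widehat{\rho}_0-\widehat{\rho}}$, both immediate consequences of $\widehat{\rho} < \rho$. The proof amounts to bookkeeping and presents no serious obstacle; the only points demanding care are the case split for $\Phi_c$ and the judicious use of the integer triangle inequality in $\Theta_{su}$ to cleanly isolate the factor $e^{\widehat{\rho}_0 |n|}$ in each of the two series.
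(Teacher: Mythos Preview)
Your proposal is correct and follows exactly the direct geometric-series estimation that the paper intends; Lemma~\ref{LE2.5} (like Lemmas~\ref{LE2.3}, \ref{LE2.4} and~\ref{LE2.6}) is stated in the paper without proof, and your argument is the standard verification. One minor point: in the $n\leq 0$ branch of $\Phi_c$ your computation actually produces the constant $\kappa e^{\rho_0}/(1-e^{\rho_0-\widehat\rho_0})$ rather than the stated $\kappa/(1-e^{\rho_0-\widehat\rho_0})$, but this harmless discrepancy (likely a typo in the lemma) is immaterial since only the existence of a finite bound independent of $B$ is used downstream in Lemma~\ref{LE2.7}.
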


\bigskip

\noindent \textbf{Reformulation of equation (\ref{2.13})-(\ref{2.14}) for
the projectors on }$\widehat{X}_{s}$\textbf{\ and }$\widehat{X}_{u}$\textbf{:%
} Define the linear operator \textbf{\ }%
\begin{equation*}
\Theta _{s}(u):=\Theta _{A_{s}\Pi _{s}}(u)_{0}=\sum_{m=0}^{+\infty
}A_{s}^{m}\Pi _{s}u_{m}
\end{equation*}%
then equation (\ref{2.13}) becomes 
\begin{equation}
\widehat{\Pi }_{s}=\Pi _{s}-\Theta _{s}\circ B\circ S_{-}(E^{u}+\chi
_{-}\left( E^{c}\right) )+\Theta _{cu}(\left( A_{u}^{-1}\Pi
_{u}+A_{c}^{-1}\Pi _{c}\right) BE^{s})_{0}  \label{2.39}
\end{equation}%
where $\chi _{-}:\mathbb{L}_{\widehat{\rho }_{0}}\left( \mathbb{Z},\mathcal{L%
}\left( X\right) \right) \rightarrow \mathbb{L}_{\widehat{\rho }_{0}}\left( 
\mathbb{N},\mathcal{L}\left( X\right) \right) $ is defined by%
\begin{equation*}
\chi _{-}\left( E^{c}\right) _{n}=E_{-n}^{c}\text{ for }n\geq 0.
\end{equation*}%
Define 
\begin{equation*}
\Theta _{u}(u):=\Theta _{A_{u}^{-1}\Pi _{u}}(u)_{0}=\sum_{m=0}^{+\infty
}A_{u}^{-m}\Pi _{u}u_{m},
\end{equation*}%
and \eqref{2.14} re-writes as: 
\begin{equation}
\widehat{\Pi }_{u}=\Pi _{u}+\Theta _{sc}\circ B\circ S_{-}\left(
E^{u}\right) _{0}+\Theta _{u}(A_{u}^{-1}\Pi _{u}B\left( E^{s}+\chi
_{+}\left( E^{c}\right) \right) ),  \label{2.40}
\end{equation}%
where $\chi _{+}:\mathbb{L}_{\widehat{\rho }_{0}}\left( \mathbb{Z},\mathcal{L%
}\left( X\right) \right) \rightarrow \mathbb{L}_{\widehat{\rho }_{0}}\left( 
\mathbb{N},\mathcal{L}\left( X\right) \right) $ is defined by%
\begin{equation*}
\chi _{+}\left( E^{c}\right) _{n}:=E_{n}^{c}\text{ for }n\geq 0.
\end{equation*}

\begin{lemma}
\label{LE2.6}The operators $\Theta _{s}$ and $\Theta _{u}$ have the
following properties:

\begin{itemize}
\item[(i)] $\Theta _{s}$ and $\Theta _{u}$ map $\mathbb{L}_{\widehat{\rho }%
_{0}}\left( \mathbb{N},\mathcal{L}\left( X\right) \right) $ into $\mathcal{L}%
\left( X\right) $ with 
\begin{equation*}
\left\Vert \Theta _{s}\left( \mathbf{v}\right) \right\Vert _{\mathcal{L}%
\left( X\right) }\leq \frac{\kappa }{1-e^{\widehat{\rho }_{0}-\rho }}%
\left\Vert \mathbf{v}\right\Vert _{\mathbb{L}_{\widehat{\rho }_{0}}},\
\forall \mathbf{v}\in \mathbb{L}_{\widehat{\rho }_{0}}\left( \mathbb{N},%
\mathcal{L}\left( X\right) \right) ,
\end{equation*}%
and%
\begin{equation*}
\left\Vert \Theta _{u}\left( \mathbf{v}\right) \right\Vert _{\mathcal{L}%
\left( X\right) }\leq \frac{\kappa }{1-e^{\widehat{\rho }_{0}-\rho }}%
\left\Vert \mathbf{v}\right\Vert _{\mathbb{L}_{\widehat{\rho }_{0}}},\
\forall \mathbf{v}\in \mathbb{L}_{\widehat{\rho }_{0}}\left( \mathbb{N},%
\mathcal{L}\left( X\right) \right) ;
\end{equation*}

\item[(ii)] $\Theta _{s}$ and $\Theta _{u}$ map $\mathbb{L}_{-\widehat{\rho }%
}\left( \mathbb{N},\mathcal{L}\left( X\right) \right) \subset \mathbb{L}_{%
\widehat{\rho }_{0}}\left( \mathbb{N},\mathcal{L}\left( X\right) \right) $
into $\mathcal{L}\left( X\right) $ with%
\begin{equation*}
\left\Vert \Theta _{s}\left( \mathbf{v}\right) \right\Vert _{\mathcal{L}%
\left( X\right) }\leq \frac{\kappa }{1-e^{-\widehat{\rho }-\rho }}\left\Vert 
\mathbf{v}\right\Vert _{\mathbb{L}_{-\widehat{\rho }}},\ \forall \mathbf{v}%
\in \mathbb{L}_{-\widehat{\rho }}\left( \mathbb{N},\mathcal{L}\left(
X\right) \right) ,
\end{equation*}%
\begin{equation*}
\left\Vert \Theta _{u}\left( \mathbf{v}\right) \right\Vert _{\mathcal{L}%
\left( X\right) }\leq \frac{\kappa }{1-e^{-\widehat{\rho }-\rho }}\left\Vert 
\mathbf{v}\right\Vert _{\mathbb{L}_{-\widehat{\rho }}},\ \forall \mathbf{v}%
\in \mathbb{L}_{-\widehat{\rho }}\left( \mathbb{N},\mathcal{L}\left(
X\right) \right) .
\end{equation*}
\end{itemize}
\end{lemma}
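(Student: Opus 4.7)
The plan is to view each of $\Theta _{s}$ and $\Theta _{u}$ as evaluating a Neumann-type series in $\mathcal{L}(X)$, to verify absolute convergence of the series in operator norm via the exponential decay provided by \eqref{1.8}--\eqref{1.9}, and to read off the advertised constants from a geometric series. Contrary to Lemmas \ref{LE2.3}--\ref{LE2.5}, the outputs of $\Theta _{s}$ and $\Theta _{u}$ are single bounded operators (not sequences), so no supremum in $n$ needs to be tracked; only one series must be bounded.

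For part (i), I would fix $\mathbf{v}\in \mathbb{L}_{\widehat{\rho }_{0}}(\mathbb{N},\mathcal{L}(X))$ and combine the exponential decay $\left\Vert A_{s}^{m}\Pi _{s}\right\Vert _{\mathcal{L}(X)}\leq \kappa e^{-\rho m}$ coming from \eqref{1.8} with the growth bound $\left\Vert v_{m}\right\Vert _{\mathcal{L}(X)}\leq e^{\widehat{\rho }_{0}m}\left\Vert \mathbf{v}\right\Vert _{\mathbb{L}_{\widehat{\rho }_{0}}}$ coming from the definition of the norm. The triangle inequality applied term by term gives
\[
\left\Vert \Theta _{s}(\mathbf{v})\right\Vert _{\mathcal{L}(X)}\leq \sum_{m=0}^{+\infty }\kappa e^{-(\rho -\widehat{\rho }_{0})m}\left\Vert \mathbf{v}\right\Vert _{\mathbb{L}_{\widehat{\rho }_{0}}},
\]
and since the ordering $\widehat{\rho }_{0}<\widehat{\rho }<\rho $ ensures $\rho -\widehat{\rho }_{0}>0$, the geometric series converges to $\kappa /(1-e^{\widehat{\rho }_{0}-\rho })$, which is the stated bound. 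The very same absolute convergence in operator norm shows that the defining series actually converges in $\mathcal{L}(X)$, so $\Theta _{s}(\mathbf{v})$ is a well-defined bounded linear operator. The argument for $\Theta _{u}$ is word-for-word identical after replacing \eqref{1.8} by \eqref{1.9} to bound $\left\Vert A_{u}^{-m}\Pi _{u}\right\Vert _{\mathcal{L}(X)}\leq \kappa e^{-\rho m}$.

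For part (ii), the only change is that a sequence $\mathbf{v}\in \mathbb{L}_{-\widehat{\rho }}(\mathbb{N},\mathcal{L}(X))$ satisfies the sharper estimate $\left\Vert v_{m}\right\Vert \leq e^{-\widehat{\rho }m}\left\Vert \mathbf{v}\right\Vert _{\mathbb{L}_{-\widehat{\rho }}}$. Inserting this in the same computation produces a geometric series of ratio $e^{-(\rho +\widehat{\rho })}$ rather than $e^{-(\rho -\widehat{\rho }_{0})}$, summing to $\kappa /(1-e^{-(\rho +\widehat{\rho })})$. The obvious inclusion $\mathbb{L}_{-\widehat{\rho }}(\mathbb{N},\mathcal{L}(X))\subset \mathbb{L}_{\widehat{\rho }_{0}}(\mathbb{N},\mathcal{L}(X))$ (an exponentially decaying sequence is a fortiori majorized by any positive exponential growth) makes this sharper estimate meaningful and consistent with part (i).

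There is no genuine obstacle here: both parts reduce to a single application of the triangle inequality followed by summing a convergent geometric series, the convergence being a direct consequence of the strict separation $\widehat{\rho }_{0}<\rho $ dictated by the perturbation parameters. The only point requiring a bit of care is to track the sign convention in the weighted norms, namely that on $\mathbb{L}_{\eta }(\mathbb{Z},\cdot )$ one has $\|v_{n}\|\leq e^{\eta |n|}\|\mathbf{v}\|$ whereas on $\mathbb{L}_{-\eta }(\mathbb{N},\cdot )$ one has $\|v_{n}\|\leq e^{-\eta n}\|\mathbf{v}\|$; beyond that, the proof is routine.
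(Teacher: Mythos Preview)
Your argument is correct: the paper states Lemma \ref{LE2.6} without proof, and the straightforward termwise estimate via the trichotomy bounds \eqref{1.8}--\eqref{1.9} followed by summing a geometric series is precisely the routine verification the authors leave to the reader. Your handling of the two sign conventions for the weighted norms is also the right one, and the constants you obtain match those asserted in the lemma.
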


\bigskip

By using the expressions of $\widehat{\Pi }_{s}$ and $\widehat{\Pi }_{u}$
obtained in (\ref{2.39}) and (\ref{2.40}), and by replacing those
expressions into $A_{s}^{n}\Pi _{s}\widehat{\Pi }_{s}$ (respectively into $%
A_{u}^{-n}\Pi _{u}\widehat{\Pi }_{u}$ and $A_{c}^{n}\Pi _{c}\left[ I-\left( 
\widehat{\Pi }_{s}+\widehat{\Pi }_{u}\right) \right] $) in equation (\ref%
{2.36}) (respectively in (\ref{2.37}) and (\ref{2.38})) we will derive a new
fixed point problem only for $E^{s},\ E^{u}$ and $E^{c}$ given explicitly as
follow for each $n\in\mathbb{N}$: 
\begin{eqnarray}
E_{n}^{s} &=&A_{s}^{n}\Pi _{s}-\sum_{m=0}^{+\infty }A_{s}^{m+n}\Pi _{s}B 
\left[ E_{m+1}^{u}+E_{-m-1}^{c}\right]  \label{2.41} \\
&&+\sum_{m=0}^{n-1}A_{s}^{n-m-1}\Pi _{s}BE_{m}^{s}-\sum_{m=0}^{+\infty } 
\left[ A_{u}^{-m-1}\Pi _{u}+A_{c}^{-m-1}\Pi _{c}\right] BE_{n+m}^{s},  \notag
\end{eqnarray}%
\begin{eqnarray}
E_{n}^{u} &=&A_{u}^{-n}\Pi _{u}+\sum_{m=0}^{+\infty }A_{u}^{-n-m-1}\Pi _{u}B 
\left[ E_{m}^{s}+E_{m}^{c}\right]  \label{2.42} \\
&&-\sum_{m=0}^{n-1}\left[ A_{u}^{-m-1}\Pi _{u}\right] BE_{n-m}^{u}+%
\sum_{m=0}^{+\infty }\left[ A_{s}^{m}\Pi _{s}+A_{c}^{m}\Pi _{c}\right]
BE_{n+m+1}^{u},  \notag
\end{eqnarray}%
and for each $n\in \mathbb{Z}$: 
\begin{eqnarray}
E_{n}^{c} &=&A_{c}^{n}\Pi _{c}-\sum_{m=0}^{+\infty }A_{c}^{m+n}\Pi
_{c}BE_{m+1}^{u}+\sum_{m=0}^{+\infty }A_{c}^{n-m-1}\Pi _{c}BE_{m}^{s}
\label{2.43} \\
&&+\sum_{m=0}^{n-1}A_{c}^{n-m-1}\Pi
_{c}BE_{m}^{c}-\sum_{m=0}^{-n-1}A_{c}^{-m-1}\Pi _{c}BE_{n+m}^{c}  \notag \\
&&-\sum_{m=0}^{+\infty }A_{u}^{-m-1}\Pi _{u}BE_{n+m}^{c}+\sum_{m=0}^{+\infty
}A_{s}^{m}\Pi _{s}BE_{-m-1+n}^{c}.  \notag
\end{eqnarray}%
Moreover with this notation the explicit formulas for $\widehat{\Pi }_{k},\
k=s,c,u$ reads as 
\begin{equation*}
\begin{split}
&\widehat{\Pi }_{s}=\Pi _{s}-\sum_{m=0}^{+\infty }A_{s}^{m}\Pi _{s}B\left[
E_{m+1}^{u}+E_{-m-1}^{c}\right]-\sum_{m=0}^{+\infty }\left[ A_{u}^{-m-1}\Pi
_{u}+A_{c}^{-m-1}\Pi _{c}\right] BE_{m}^{s}, \\
&\widehat{\Pi }_{u}=\Pi _{u}+\sum_{m=0}^{+\infty }A_{u}^{-m-1}\Pi _{u}B\left[
E_{m}^{s}+E_{m}^{c}\right]+\sum_{m=0}^{+\infty }\left[ A_{s}^{m}\Pi
_{s}+A_{c}^{m}\Pi _{c}\right] E_{m+1}^{u}, \\
&\widehat{\Pi }_{c} =\Pi _{c}-\sum_{m=0}^{+\infty }A_{c}^{m}\Pi
_{c}BE_{m+1}^{u}+\sum_{m=0}^{+\infty }A_{c}^{-m-1}\Pi _{c}BE_{m}^{s} \\
&\phantom{\widehat{\Pi }_{c} =}-\sum_{m=0}^{+\infty }A_{u}^{-m-1}\Pi
_{u}BE_{m}^{c}+\sum_{m=0}^{+\infty }A_{s}^{m}\Pi _{s}BE_{-m-1}^{c}.
\end{split}%
\end{equation*}
Observe that we have the following relation $E_{0}^{k}=\widehat{\Pi }_{k}$
for each $k=s,c,u$, as well as the following identity 
\begin{equation*}
E_{0}^{s}+E_{0}^{c}+E_{0}^{u}=\widehat{\Pi }_{s}+\widehat{\Pi }_{c}+\widehat{%
\Pi }_{u}=I.
\end{equation*}%
Furthermore the system (\ref{2.41})-(\ref{2.43}) also re-writes as the
following compact form 
\begin{equation}
\left( 
\begin{array}{c}
E^{s} \\ 
E^{u} \\ 
E^{c}%
\end{array}%
\right) =\left( 
\begin{array}{c}
A_{s}^{\cdot }\Pi _{s} \\ 
A_{u}^{-\cdot }\Pi _{u} \\ 
A_{c}^{\cdot }\Pi _{c}%
\end{array}%
\right) +\mathcal{J }\left( 
\begin{array}{c}
E^{s} \\ 
E^{u} \\ 
E^{c}%
\end{array}%
\right),\text{ and }\mathcal{J}:=\left(\mathcal{J}_{ij}\right)_{1\leq
i,j\leq 3}  \label{2.49}
\end{equation}%
wherein the linear operators $\left\{ \mathcal{J}_{ij}\right\} _{1\leq
i,j\leq 3}$ are given by%
\begin{equation*}
\begin{split}
\mathcal{J}_{11}:=& A_{s}^{\cdot }\Pi _{s}\circ \Theta _{cu}\left( \cdot
\right) _{0}\circ \left( A_{u}^{-1}\Pi _{u}+A_{c}^{-1}\Pi _{c}\right) \circ
B+\Phi _{s}\circ B \\
& -\Theta _{cu}\circ \left( A_{u}^{-1}\Pi _{u}+A_{c}^{-1}\Pi _{c}\right)
\circ B \\
\mathcal{J}_{12}:=& -A_{s}^{\cdot }\Pi _{s}\circ \Theta _{s}\circ B\circ
S_{-} \\
\mathcal{J}_{13}:=& -A_{s}^{\cdot }\Pi _{s}\circ \Theta _{s}\circ B\circ
S_{-}\circ \chi _{-}
\end{split}%
\end{equation*}%
\begin{equation*}
\begin{split}
\mathcal{J}_{21}:=& A_{u}^{-\cdot }\Pi _{u}\circ \Theta _{u}\circ
A_{u}^{-1}\Pi _{u}\circ B \\
\mathcal{J}_{22}:=& A_{u}^{-\cdot }\Pi _{u}\circ \Theta _{sc}\left( \cdot
\right) _{0}\circ B\circ S_{-}-\Phi _{u}\circ A_{u}^{-1}\Pi _{u}\circ B\circ
S_{-} \\
& +\Theta _{sc}\circ B\circ S_{-} \\
\mathcal{J}_{23}:=& A_{u}^{-\cdot }\Pi _{u}\circ \Theta _{u}\circ
A_{u}^{-1}\Pi _{u}\circ B\circ \chi _{+} \\
&
\end{split}%
\end{equation*}%
\begin{equation*}
\begin{split}
\mathcal{J}_{31}:=& -A_{c}^{\cdot }\Pi _{c}\circ \Theta _{cu}(\cdot
)_{0}\circ \left( A_{u}^{-1}\Pi _{u}+A_{c}^{-1}\Pi _{c}\right) \circ B \\
& -A_{c}^{\cdot }\Pi _{c}\circ \Theta _{u}(\cdot )\circ A_{u}^{-1}\Pi
_{u}\circ B \\
\mathcal{J}_{32}:=& A_{c}^{\cdot }\Pi _{c}\circ \Theta _{s}\circ B\circ
S_{-}-A_{c}^{\cdot }\Pi _{c}\circ \Theta _{sc}\left( \cdot \right) _{0}\circ
B\circ S_{-} \\
\mathcal{J}_{33}:=& A_{c}^{\cdot }\Pi _{c}\circ \Theta _{s}\circ B\circ
S_{-}\circ \chi _{-} \\
& A_{c}^{\cdot }\Pi _{c}\circ \Theta _{u}\circ A_{u}^{-1}\Pi _{u}\circ
B\circ \chi _{+}+\Phi _{c}\circ B+\Theta _{su}\circ B.
\end{split}%
\end{equation*}%
In the sequel we define the Banach space 
\begin{equation*}
\mathcal{X}=\mathbb{L}_{-\widehat{\rho }}\left( \mathbb{N},\mathcal{L}\left(
X\right) \right) \times \mathbb{L}_{-\widehat{\rho }}\left( \mathbb{N},%
\mathcal{L}\left( X\right) \right) \times \mathbb{L}_{\widehat{\rho }%
_{0}}\left( \mathbb{Z},\mathcal{L}\left( X\right) \right),
\end{equation*}
endowed with the usual product norm: 
\begin{equation*}
\left\Vert \mathbf{Z}\right\Vert_{\mathcal{X}} =\max \left\{ \left\Vert
E^{s}\right\Vert _{\mathbb{L}_{-\widehat{\rho }}},\left\Vert
E^{u}\right\Vert _{\mathbb{L}_{-\widehat{\rho }}},\left\Vert
E^{c}\right\Vert _{\mathbb{L}_{\widehat{\rho }_{0}}}\right\},\;\forall 
\mathbf{Z}=\left(E^s,E^u,E^c\right)^T\in\mathcal{X}.
\end{equation*}
The following lemma holds true:

\begin{lemma}
\label{LE2.7} Let $A:D(A)\subset X\to X$ be a closed linear operator and let
us assume that the conditions of Theorem \ref{TH1.4} are satisfied.\ Then
the linear operator $\mathcal{J}$ defined in (\ref{2.49}) satisfies $%
\mathcal{J}\in \mathcal{L}(\mathcal{X})$. More precisely there exists some
constant $C:=C\left( \kappa ,\rho ,\rho _{0},\widehat{\rho },\widehat{\rho }%
_{0}\right) $ such that 
\begin{equation*}
\left\Vert \mathcal{J}(\mathbf{Z})\right\Vert_{\mathcal{X}}\leq C\left\Vert
B\right\Vert _{\mathcal{L}\left( X\right) }\|\mathbf{Z}\|_{\mathcal{X}%
},\;\;\forall \mathbf{Z}\in \mathcal{X}.
\end{equation*}
\end{lemma}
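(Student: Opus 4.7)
The plan is to view $\mathcal{J}$ as a $3\times 3$ block operator and bound each entry $\mathcal{J}_{ij}$ separately, reducing to the estimates already established in Lemmas \ref{LE2.3}, \ref{LE2.4}, \ref{LE2.5} and \ref{LE2.6}. Since every entry of $\mathcal{J}$ contains exactly one factor of $B$, provided I can show each of the remaining components is bounded (independently of $B$), the bound of the stated form $C\|B\|_{\mathcal{L}(X)}$ will drop out.

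First I would dispose of the elementary building blocks that are not treated by the previous lemmas. The shift $S_{-}$ acts on $\mathbb{L}_{-\widehat{\rho}}\left(\mathbb{N},\mathcal{L}(X)\right)$ with $\|S_{-}(\mathbf{u})\|_{\mathbb{L}_{-\widehat{\rho}}}\leq e^{-\widehat{\rho}}\|\mathbf{u}\|_{\mathbb{L}_{-\widehat{\rho}}}$. The restrictions $\chi_{\pm}:\mathbb{L}_{\widehat{\rho}_0}\left(\mathbb{Z},\mathcal{L}(X)\right)\to\mathbb{L}_{\widehat{\rho}_0}\left(\mathbb{N},\mathcal{L}(X)\right)$ have norm $\leq 1$. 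The evaluation at $n=0$ is bounded from $\mathbb{L}_{-\widehat{\rho}}\left(\mathbb{N},\mathcal{L}(X)\right)$ into $\mathcal{L}(X)$ with norm $\leq 1$. The pointwise multiplication by $B$ is bounded on each $\mathbb{L}_{\pm\widehat{\rho}}$ with norm $\|B\|_{\mathcal{L}(X)}$. Finally, the ``suspension'' operators that send $T\in\mathcal{L}(X)$ to the sequences $\{A_{s}^{n}\Pi_{s}T\}_{n\in\mathbb{N}}$, $\{A_{u}^{-n}\Pi_{u}T\}_{n\in\mathbb{N}}$, $\{A_{c}^{n}\Pi_{c}T\}_{n\in\mathbb{Z}}$ are bounded into $\mathbb{L}_{-\widehat{\rho}}$ and $\mathbb{L}_{\widehat{\rho}_0}$ respectively, since by the exponential trichotomy estimates of Remark \ref{RE1.2} and the strict ordering $\rho_0<\widehat{\rho}_0<\widehat{\rho}<\rho$ one has $\sup_{n\in\mathbb{N}}e^{\widehat{\rho}n}\|A_{s}^{n}\Pi_{s}\|\leq\kappa\|\Pi_{s}\|$ and similarly for the $u$ and $c$ parts, with sums of geometric type in $e^{\widehat{\rho}_0-\rho}$ yielding finite bounds.

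Next I would go through the nine entries systematically. For each $\mathcal{J}_{ij}$, I read off the composition from the formulas in the excerpt, apply the appropriate bound from Lemmas \ref{LE2.3}--\ref{LE2.6} for the $\Phi$ or $\Theta$ factor, and multiply by the norms of the elementary pieces just listed. For instance, $\mathcal{J}_{11}$ maps $E^{s}\in\mathbb{L}_{-\widehat{\rho}}$ to an element of the same space; the $\Phi_{s}\circ B$ contribution is controlled by Lemma \ref{LE2.3} and picks up a factor $\|B\|_{\mathcal{L}(X)}$, the $\Theta_{cu}\circ(A_{u}^{-1}\Pi_{u}+A_{c}^{-1}\Pi_{c})\circ B$ contribution is controlled again by Lemma \ref{LE2.3} after noting that $A_{u}^{-1}\Pi_{u}+A_{c}^{-1}\Pi_{c}$ is bounded on $X$, and the ``suspension'' contribution $A_{s}^{\cdot}\Pi_{s}\circ\Theta_{cu}(\cdot)_{0}\circ\dots\circ B$ combines the $\Theta_{cu}$-bound with the finite $\sup_{n}e^{\widehat{\rho}n}\|A_{s}^{n}\Pi_{s}\|$. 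The entries involving $\Theta_{s},\Theta_{u}$ (that is, $\mathcal{J}_{12},\mathcal{J}_{13},\mathcal{J}_{21},\mathcal{J}_{23},\mathcal{J}_{31}$, etc.) are handled in the same way via Lemma \ref{LE2.6}; the key point there is that $\Theta_{s}$ and $\Theta_{u}$ are defined on $\mathbb{L}_{-\widehat{\rho}}(\mathbb{N})$ and also on $\chi_{\pm}(\mathbb{L}_{\widehat{\rho}_0}(\mathbb{Z}))\subset\mathbb{L}_{\widehat{\rho}_0}(\mathbb{N})$, and Lemma \ref{LE2.6} provides bounds in both regimes. The central component $\mathcal{J}_{33}$ uses Lemma \ref{LE2.5} for the $\Phi_{c}\circ B$ and $\Theta_{su}\circ B$ parts, and the previous arguments for the remaining terms.

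Collecting the nine bounds, one obtains for each $i$ an inequality of the form
\begin{equation*}
\|\mathcal{J}_{i1}E^{s}\|+\|\mathcal{J}_{i2}E^{u}\|+\|\mathcal{J}_{i3}E^{c}\|\leq C_{i}\,\|B\|_{\mathcal{L}(X)}\,\|\mathbf{Z}\|_{\mathcal{X}},
\end{equation*}
with $C_{i}=C_{i}(\kappa,\rho,\rho_{0},\widehat{\rho},\widehat{\rho}_{0})$ a finite constant depending only on the trichotomy parameters through the geometric-series denominators $1-e^{\widehat{\rho}-\rho}$, $1-e^{\rho_{0}-\widehat{\rho}_{0}}$, $1-e^{\widehat{\rho}_{0}-\widehat{\rho}}$, etc., all of which are strictly positive thanks to the ordering $\rho_{0}<\widehat{\rho}_{0}<\widehat{\rho}<\rho$. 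Taking the maximum over $i=1,2,3$ yields the claimed estimate with $C=\max_{i}C_{i}$. I do not expect any single step to be hard: the difficulty is purely bookkeeping, making sure at each composition that domain, codomain and weight match, which is guaranteed by the way the weights $-\widehat{\rho}$ and $\widehat{\rho}_{0}$ were chosen so that all relevant geometric sums converge.
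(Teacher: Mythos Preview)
Your proposal is correct and follows essentially the same approach as the paper: bound the elementary pieces $\chi_{\pm}$, $S_{-}$, pointwise multiplication by $B$, and the ``suspension'' maps $T\mapsto A_{k}^{\cdot}\Pi_{k}T$, then invoke Lemmas \ref{LE2.3}--\ref{LE2.6} on each of the nine blocks $\mathcal{J}_{ij}$. The paper's own proof is much terser (it only records the bounds on $\chi_{\pm}$ and $S_{-}$ and then writes ``the result follows easily by simple computations''), so your write-up is actually more detailed; the only small omission is that you state the $S_{-}$ bound on $\mathbb{L}_{-\widehat{\rho}}(\mathbb{N})$ but not on $\mathbb{L}_{\widehat{\rho}_0}$, which is also needed (e.g.\ in $\mathcal{J}_{13}$ after $\chi_{-}$) and is equally elementary.
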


\begin{proof}
Let us notice that $\chi _{+}$ and $\chi _{-}$ are bounded linear operator
defined from $\mathbb{L}_{\widehat{\rho }_{0}}\left( \mathbb{Z},\mathcal{L}%
\left( X\right) \right) $ into $\mathbb{L}_{\widehat{\rho }_{0}}\left( 
\mathbb{N},\mathcal{L}\left( X\right) \right) .$ Furthermore we have%
\begin{equation}
\left\Vert \chi _{+}\right\Vert _{\mathcal{L}\left( \mathbb{L}_{\widehat{%
\rho }_{0}}\left( \mathbb{Z},\mathcal{L}\left( X\right) \right) ,\mathbb{L}_{%
\widehat{\rho }_{0}}\left( \mathbb{N},\mathcal{L}\left( X\right) \right)
\right) }\leq 1,  \label{2.51}
\end{equation}%
and%
\begin{equation}
\left\Vert \chi _{-}\right\Vert _{\mathcal{L}\left( \mathbb{L}_{\widehat{%
\rho }_{0}}\left( \mathbb{Z},\mathcal{L}\left( X\right) \right) ,\mathbb{L}_{%
\widehat{\rho }_{0}}\left( \mathbb{N},\mathcal{L}\left( X\right) \right)
\right) }\leq 1\text{.}  \label{2.52}
\end{equation}%
We also note that $S_{-}\in \mathcal{L}\left( \mathbb{L}_{\widehat{\rho }%
_{0}}\left( \mathbb{Z},\mathcal{L}\left( X\right) \right) \right) $ and $%
S_{-}\in \mathcal{L}\left( \mathbb{L}_{-\widehat{\rho }}\left( \mathbb{N},%
\mathcal{L}\left( X\right) \right) \right) $ with%
\begin{equation}
\left\Vert S_{-}\right\Vert _{\mathcal{L}\left( \mathbb{L}_{\widehat{\rho }%
_{0}}\left( \mathbb{Z},\mathcal{L}\left( X\right) \right) \right) }\leq e^{%
\widehat{\rho }_{0}}\text{ and }\left\Vert S_{-}\right\Vert _{\mathcal{L}%
\left( \mathbb{L}_{-\widehat{\rho }}\left( \mathbb{N},\mathcal{L}\left(
X\right) \right) \right) }\leq 1.  \label{2.53}
\end{equation}%
Therefore by combining (\ref{2.51})-(\ref{2.53}) together with lemmas (\ref%
{LE2.3})-(\ref{LE2.6}) the result follows easily by simple computations.
\end{proof}

As a consequence of the above lemma we obtain the following result:

\begin{proposition}
\label{PR2.9} Let $A:D(A)\subset X\to X$ be given such that the conditions
of Theorem \ref{TH1.4} are satisfied.\ Then there exists $\delta
_{0}:=\delta _{0}\left( \kappa ,\rho ,\rho _{0},\widehat{\rho },\widehat{%
\rho }_{0}\right) \in \left( 0,C^{-1}\right) $ such that for each $\delta
\in \left( 0,\frac{\delta _{0}^{2}}{\kappa +\delta _{0}}\right) $ and each $%
B\in \mathcal{L}(X)$ with $\left\Vert B\right\Vert _{\mathcal{L}\left(
X\right) }\leq \delta$, there exists a unique $\mathbf{Z}=\left(E^s,E^u,E^c%
\right)^T\in \mathcal{X}$ such that (\ref{2.49}) (or equivalently (\ref{2.41}%
)-(\ref{2.43})) holds true. Moreover we have the following properties

\begin{itemize}
\item[(i)] For each $n\in \mathbb{N}$ 
\begin{equation*}
\left\Vert E_{n}^{s}\right\Vert _{\mathcal{L}\left( X\right) }\leq \frac{%
\kappa \delta _{0}}{\delta _{0}-\delta }e^{-\widehat{\rho }n}\text{ and }%
\left\Vert E_{n}^{u}\right\Vert _{\mathcal{L}\left( X\right) }\leq \frac{%
\kappa \delta _{0}}{\delta _{0}-\delta }e^{-\widehat{\rho }n},
\end{equation*}%
and for each $n\in \mathbb{Z}$%
\begin{equation*}
\left\Vert E_{n}^{c}\right\Vert _{\mathcal{L}\left( X\right) }\leq \frac{%
\kappa \delta _{0}}{\delta _{0}-\delta }e^{\widehat{\rho }_{0}\left\vert
n\right\vert }.
\end{equation*}

\item[(ii)] The following estimates hold:%
\begin{equation*}
\left\Vert E_{n}^{s}-A_{s}^{n}\Pi _{s}\right\Vert _{\mathcal{L}\left(
X\right) }\leq \frac{\kappa \delta }{\delta _{0}-\delta }e^{-\widehat{\rho }%
n},\ n\in \mathbb{N}\text{,}
\end{equation*}%
\begin{equation*}
\left\Vert E_{n}^{u}-A_{u}^{-n}\Pi _{u}\right\Vert _{\mathcal{L}\left(
X\right) }\leq \frac{\kappa \delta }{\delta _{0}-\delta }e^{-\widehat{\rho }%
n},\ n\in \mathbb{N}\text{,}
\end{equation*}%
and%
\begin{equation*}
\left\Vert E_{n}^{c}-A_{c}^{n}\Pi _{c}\right\Vert _{\mathcal{L}\left(
X\right) }\leq \frac{\kappa \delta }{\delta _{0}-\delta }e^{\widehat{\rho }%
_{0}\left\vert n\right\vert },\ n\in \mathbb{Z}\text{.}
\end{equation*}

\item[(iii)] One has 
\begin{equation*}
\begin{split}
&E_n^s\left(X\right)\subset D(A),\;\;\forall n\in \mathbb{N}, \\
&E_n^c\left(X\right)\subset D(A),\;\;\forall n\in \mathbb{Z}, \\
&E_n^u\left(X\right)\subset D(A),\;\;\forall n\geq 1\text{ and }\widehat{\Pi}%
_u\left(D(A)\right)\subset D(A).
\end{split}%
\end{equation*}
In particular one has $\widehat{\Pi}_k(X)\subset D(A)$ for $k=s,c$ and for
each $n\geq 0$ $(A+B)\circ E^s_n\in \mathcal{L}(X)$ while for each $n\in%
\mathbb{Z}$ $(A+B)\circ E_n^c\in \mathcal{L}(X)$.
\end{itemize}
\end{proposition}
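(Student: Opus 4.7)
The plan is to recast (2.49) as an affine fixed-point problem on the Banach space $\mathcal{X}$ and invoke Banach's contraction mapping theorem. Define $T:\mathcal{X}\to\mathcal{X}$ by $T(\mathbf{Z}) := \mathbf{Z}_0 + \mathcal{J}(\mathbf{Z})$, where
\[
\mathbf{Z}_0 := \left(A_s^{\cdot}\Pi_s,\; A_u^{-\cdot}\Pi_u,\; A_c^{\cdot}\Pi_c\right)^T.
\]
Using the bounds from Remark \ref{RE1.2} together with the strict inequalities $\widehat{\rho}<\rho$ and $\rho_0<\widehat{\rho}_0$, the suprema defining the three components of $\|\mathbf{Z}_0\|_{\mathcal{X}}$ are attained at $n=0$, giving $\mathbf{Z}_0\in\mathcal{X}$ with $\|\mathbf{Z}_0\|_{\mathcal{X}}\leq\kappa$.

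Let $C=C(\kappa,\rho,\rho_0,\widehat{\rho},\widehat{\rho}_0)$ be the constant furnished by Lemma \ref{LE2.7} and fix any $\delta_0\in(0,\min\{C^{-1},\sqrt{2}-1\})$. For every $B\in\mathcal{L}(X)$ with $\|B\|_{\mathcal{L}(X)}\leq\delta\leq\delta_0$, Lemma \ref{LE2.7} yields $\|\mathcal{J}\|_{\mathcal{L}(\mathcal{X})}\leq C\delta\leq \delta/\delta_0<1$, so $T$ is a strict contraction of ratio $q:=\delta/\delta_0$. Banach's theorem then produces a unique fixed point $\mathbf{Z}=(E^s,E^u,E^c)^T\in\mathcal{X}$ of $T$, which is precisely the unique solution of (2.49), equivalently of (2.41)--(2.43). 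The geometric-series bound for affine contractions gives
\[
\|\mathbf{Z}\|_{\mathcal{X}}\leq\frac{\|\mathbf{Z}_0\|_{\mathcal{X}}}{1-q}\leq\frac{\kappa\delta_0}{\delta_0-\delta},
\]
which unpacks component-wise to (i); writing $\mathbf{Z}-\mathbf{Z}_0=\mathcal{J}(\mathbf{Z})$ gives
\[
\|\mathbf{Z}-\mathbf{Z}_0\|_{\mathcal{X}}\leq q\|\mathbf{Z}\|_{\mathcal{X}}\leq\frac{\kappa\delta}{\delta_0-\delta},
\]
proving (ii). Note that the extra assumption $\delta<\delta_0^2/(\kappa+\delta_0)$ is the algebraic translation of $\kappa\delta/(\delta_0-\delta)<\delta_0$, ensuring in later steps that the projector perturbations satisfy the hypothesis of Lemma \ref{LE2.1}.

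For (iii), I would inspect (2.41)--(2.43) term by term and track the ranges. Every coefficient $A_s^m\Pi_s$ and $A_c^m\Pi_c$ takes values in $X_s\subset D(A)$ and $X_c\subset D(A)$ respectively (since $A_s,A_c$ are bounded on their invariant subspaces), while each $A_u^{-m-1}\Pi_u$ lands in $D(A_u)\subset D(A)\cap X_u$. Consequently every summand on the right-hand side of (2.41) maps $X$ into $D(A)$, giving $E_n^s(X)\subset D(A)$; the identical bookkeeping applied to (2.43) gives $E_n^c(X)\subset D(A)$ for every $n\in\mathbb{Z}$. For $E_n^u$ with $n\geq 1$, even the free term $A_u^{-n}\Pi_u$ already lies in $D(A_u)$; only the case $n=0$ where $E_0^u=\widehat{\Pi}_u$ contains the problematic summand $\Pi_u$ requires extra care, but for $x\in D(A)$ the decomposition $D(A)=X_s\oplus X_c\oplus(D(A)\cap X_u)$ forces $\Pi_u x\in D(A)\cap X_u$, yielding $\widehat{\Pi}_u(D(A))\subset D(A)$; combined with $\widehat{\Pi}_s=E_0^s$ and $\widehat{\Pi}_c=E_0^c$ already treated, this gives $\widehat{\Pi}_k(X)\subset D(A)$ for $k=s,c$. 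Finally, since each $E_n^s,E_n^c$ is bounded from $X$ into $D(A)$ and $A$ is closed, the closed-graph theorem implies that $A\circ E_n^s$ and $A\circ E_n^c$ are everywhere-defined closed operators, hence bounded; adding the bounded perturbation $B\circ E_n^s$ (resp. $B\circ E_n^c$) preserves this. The main obstacle is really the explicit calibration of $\delta_0$, which must be small enough to simultaneously enforce $C\delta_0\leq 1$ for the contraction and $\delta_0<\sqrt{2}-1$ for the later application of Lemma \ref{LE2.1}; all remaining estimates are direct consequences of Lemmas \ref{LE2.3}--\ref{LE2.7}.
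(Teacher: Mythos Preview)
Your proof is correct and follows essentially the same approach as the paper: both set up the affine contraction $T(\mathbf{Z})=\mathbf{Z}_0+\mathcal{J}(\mathbf{Z})$ on $\mathcal{X}$, use Lemma \ref{LE2.7} with $\delta_0<C^{-1}$ to get the unique fixed point and the bounds $\|\mathbf{Z}\|_{\mathcal{X}}\leq\kappa\delta_0/(\delta_0-\delta)$ and $\|\mathbf{Z}-\mathbf{Z}_0\|_{\mathcal{X}}\leq\kappa\delta/(\delta_0-\delta)$, and then read off (iii) from the right-hand sides of (2.41)--(2.43) via the decomposition $D(A)=X_s\oplus X_c\oplus(D(A)\cap X_u)$ together with the closed graph theorem. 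Your treatment of (iii) is in fact more explicit than the paper's, correctly isolating the lone term $\Pi_u$ in $E_0^u=\widehat{\Pi}_u$ as the reason one only gets $\widehat{\Pi}_u(D(A))\subset D(A)$ rather than $\widehat{\Pi}_u(X)\subset D(A)$.
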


\begin{proof}
Let $\delta _{0}\in \left( 0,C^{-1}\right) $ be given. Assume that 
\begin{equation}
\left\Vert B\right\Vert _{\mathcal{L}\left( X\right) }\leq \delta \text{
with }\delta \in \left( 0,\frac{\delta _{0}^{2}}{\kappa +\delta _{0}}\right)
.  \label{2.54}
\end{equation}%
Then since $\frac{\delta _{0}^{2}}{\kappa +\delta _{0}}\leq \delta _{0},$
the existence and the uniqueness of a fixed point of (\ref{2.49}) (or
equivalently (\ref{2.41})-(\ref{2.43})) follows from Lemma \ref{LE2.7}.

In the sequel of this proof we denote by $\mathbf{Z}_0=\left(A_{s}^{\cdot
}\Pi _{s},A_{u}^{-\cdot }\Pi _{u},A_{c}^{\cdot }\Pi _{c}\right)^T\in\mathcal{%
X}$ the fixed point of $\mathcal{J}$ with $B=0$. \noindent In order to
obtain the properties \textit{(i)} and \textit{(ii) }we will make use of (%
\ref{2.49}). First of all since $\mathbf{Z}_0\in \mathcal{X}$, let us
observe that using (\ref{1.7})-(\ref{1.9}) we obtain 
\begin{equation}
\left\Vert \mathbf{Z}_0\right\Vert_{\mathcal{X}} \leq \kappa .  \label{2.55}
\end{equation}%
\textbf{Proof of \textit{(i)}:} By using the fixed point problem (\ref{2.49}%
) combined together with Lemma \ref{LE2.7} and (\ref{2.54}) we obtain
(recalling the notation $\mathbf{Z}=\left(E^s,E^u,E^c\right)^T$) that 
\begin{equation*}
\left\Vert \mathbf{Z}\right\Vert_{\mathcal{X}} \leq \kappa +C\delta
\left\Vert \mathbf{Z}\right\Vert_{\mathcal{X}} ,
\end{equation*}%
so that $(i)$ follows from the estimate: 
\begin{equation}
\left\Vert \mathbf{Z}\right\Vert_{\mathcal{X}} \leq \frac{\kappa }{1-C\delta 
}\leq \frac{\kappa \delta _{0}}{\delta _{0}-\delta }.  \label{2.56}
\end{equation}
\noindent \textbf{Proof of \textit{(ii)}:} By using the fixed point problem (%
\ref{2.49}) combined together with Lemma \ref{LE2.7} and (\ref{2.54}) we
obtain that 
\begin{equation}
\left\Vert \mathbf{Z}-\mathbf{Z}_0\right\Vert_{\mathcal{X}} \leq C\delta
\left\Vert \mathbf{Z} \right\Vert_{\mathcal{X}} ,  \label{2.57}
\end{equation}%
so that plugging (\ref{2.56}) into (\ref{2.57}) yields 
\begin{equation*}
\left\Vert \mathbf{Z}-\mathbf{Z}_0\right\Vert \leq \frac{\kappa C\delta }{%
1-C\delta }\leq \frac{\kappa \delta }{\delta _{0}-\delta }.
\end{equation*}%
This prove \textit{(ii)}.\newline
\noindent \textbf{Proof of \textit{(iii)}:} Let us recall that since $%
(A,D(A))$ is exponentially trichotomic (see Definition \ref{DE1.1}) then one
has $D(A)=X_s\oplus X_c\oplus \left(D(A)\cap X_u\right)$. Hence the result
directly follows from right-hand side of \eqref{2.41}-\eqref{2.43}. The
boundedness of $(A+B)\circ E^s_n$ and $(A+B)\circ E^c_n$ follows from the
closed graph theorem since $A$ is closed and $B$ bounded.
\end{proof}

\subsection{Regularized semigroup property and orthogonality property}

\begin{definition}
\label{DE2.10}A family of bounded linear operators $\left\{ W_{n}\right\}
_{n\in \mathbb{N}}\subset \mathcal{L}\left( X\right) $ is a discrete time 
\textbf{regularized semigroup} if%
\begin{equation}
W_{n}W_{p}=W_{n+p},\ \forall n,p\in \mathbb{N}.  \label{2.58}
\end{equation}
\end{definition}

\begin{remark}
\label{RE2.11}If $\left\{ W_{n}\right\} _{n\in \mathbb{N}}\subset \mathcal{L}%
\left( X\right) $ is a regularized semigroup then $W_{0}$ is a bounded
linear projector on $X$.
\end{remark}

\begin{remark}
\label{RE2.12}Observe that if $\left\{ W_{n}\right\} _{n\in \mathbb{N}%
}\subset \mathcal{L}\left( X\right) $ is discrete time regularized semigroup
then by setting $C:=W_{0}$, and using (\ref{2.58}) we obtain $%
W_{n}W_{p}=CW_{n+p}$ for all $n,p\geq 0$. These properties correspond to the
notion of $C$\textbf{-regularized semigroup} given in \cite[Definition 3.1
p.13]{DeLaubenfels} for discrete time.
\end{remark}

In the next lemmas we will show that $\left\{ E_{n}^{k}\right\} _{n\in 
\mathbb{N}}$, $k=s,c,u,$ are regularized semigroup and that we have the
orthogonality property namely for each $n\in \mathbb{N}$ 
\begin{equation*}
E_{n}^{k}E_{n}^{l}=0_{\mathcal{L}\left( X\right) }\text{ if }k,l=s,c,u\text{
with }k\neq l.
\end{equation*}%
The latter equality will allows us to obtain that the bounded linear
projectors $\widehat{\Pi }_{k}=E_{0}^{k},\ k=s,c,u$ satisfy the
orthogonality property%
\begin{equation*}
\widehat{\Pi }_{k}\widehat{\Pi }_{l}=0_{\mathcal{L}\left( X\right) }\text{
if }k,l=s,c,u\text{ with }k\neq l.
\end{equation*}

\begin{lemma}
\label{LE2.13}Let the conditions of Theorem \ref{TH1.4} be satisfied.\ If 
\begin{equation}  \label{cond-B}
\left\Vert B\right\Vert _{\mathcal{L}\left( X\right) }\leq \delta ,\ \text{%
with }\delta \in \left( 0,\frac{\delta _{0}^{2}}{\kappa +\delta _{0}}\right)
\end{equation}%
where $\delta _{0}$ is given in Proposition \ref{PR2.9} then the following
properties hold:

\begin{itemize}
\item[(i)] for each $n,p\in \mathbb{N}$ we have$%
E_{n}^{u}E_{p}^{u}=E_{n+p}^{u}$ and $E_{n}^{s}E_{p}^{u}=0_{\mathcal{L}\left(
X\right) }$, while for each $n\in \mathbb{Z}$,$\ p\in \mathbb{N}$ we have $%
E_{n}^{c}E_{p}^{u}=0_{\mathcal{L}\left( X\right) }$.

\item[(ii)] $\widehat{\Pi }_{u}\in \mathcal{L}(X)$ is a projector on $X$ and
for each $n\geq 0$ one has $E_n^u(X)\subset \widehat{\Pi }_{u}(X)$.
\end{itemize}
\end{lemma}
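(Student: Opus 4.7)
My plan is to exploit the uniqueness of the fixed point established in Proposition \ref{PR2.9}. Fix $p \in \mathbb{N}$ and, for each $\alpha \in \{s,c,u\}$, set $\widetilde{E}^\alpha_n := E_n^\alpha E_p^u$. Right-multiplying \eqref{2.41}--\eqref{2.43} by the bounded operator $E_p^u$ shows that $\widetilde{\mathbf{Z}} := (\widetilde{E}^s, \widetilde{E}^u, \widetilde{E}^c) \in \mathcal{X}$ solves a linear fixed point equation $\widetilde{\mathbf{Z}} = \widetilde{\mathbf{Z}}_{\star} + \mathcal{J}(\widetilde{\mathbf{Z}})$ with the new source
$$\widetilde{\mathbf{Z}}_{\star} := \bigl(A_s^\cdot \Pi_s E_p^u,\; A_u^{-\cdot} \Pi_u E_p^u,\; A_c^\cdot \Pi_c E_p^u\bigr)^T \in \mathcal{X}.$$
Under the smallness condition \eqref{cond-B}, Lemma \ref{LE2.7} gives $\|\mathcal{J}\|_{\mathcal{L}(\mathcal{X})} \leq C\delta < 1$, so this equation admits a unique solution in $\mathcal{X}$.

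The main step is to verify that the alternative candidate $\mathbf{Z}^\sharp := (0,\, E^u_{\cdot+p},\, 0)$ also satisfies this equation; its membership in $\mathcal{X}$ is immediate from Proposition \ref{PR2.9}(i). Substituting $\mathbf{Z}^\sharp$ into the $s$--component and discarding the zero terms reduces everything to the scalar identity
$$\Pi_s E_p^u = \sum_{m \geq 0} A_s^m \Pi_s B\, E^u_{p+m+1},$$
which is precisely what one gets by applying $\Pi_s$ to \eqref{2.42} at $n=p$, using that $\Pi_s$ annihilates $X_u$ and $X_c$. The $c$--component reduces analogously to $\Pi_c E_p^u = \sum_{m \geq 0} A_c^m \Pi_c B\, E^u_{p+m+1}$, obtained by applying $\Pi_c$ to \eqref{2.42}. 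For $\widetilde{E}^c$, both sign cases $n \geq 0$ and $n < 0$ in \eqref{2.43} collapse into the same verification, because substituting $\mathbf{Z}^\sharp$ kills every term containing $\widetilde{E}^s$ or $\widetilde{E}^c$, together with both finite sums $\sum_{m=0}^{n-1}$ and $\sum_{m=0}^{-n-1}$.

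The $u$--component check is the main technical step. Starting from \eqref{2.42} for $E^u_{n+p}$, I split $\sum_{m=0}^{n+p-1}$ as $\sum_{m=0}^{n-1} + \sum_{m=n}^{n+p-1}$ and reindex the second block via $k = m-n$; after collecting terms, the required equality reduces to
$$\Pi_u E_p^u = A_u^{-p}\Pi_u + \sum_{m \geq 0} A_u^{-p-m-1}\Pi_u B[E^s_m + E^c_m] - \sum_{m=0}^{p-1} A_u^{-m-1}\Pi_u B\, E^u_{p-m},$$
which is exactly the image of \eqref{2.42} at $n=p$ under $\Pi_u$. By uniqueness we obtain $\widetilde{\mathbf{Z}} = \mathbf{Z}^\sharp$, that is $E_n^s E_p^u = 0$, $E_n^u E_p^u = E_{n+p}^u$ and $E_n^c E_p^u = 0$, proving (i). Part (ii) is then immediate: taking $n=p=0$ in the second identity gives $\widehat{\Pi}_u^2 = \widehat{\Pi}_u$, so $\widehat{\Pi}_u$ is a projector; taking $n=0$ gives $\widehat{\Pi}_u E_p^u = E_p^u$, i.e.\ $E_p^u(X) \subset \widehat{\Pi}_u(X)$. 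The only non-routine step is the index bookkeeping in the $u$--component verification, but it is mechanical once the three projected forms of \eqref{2.42} are at hand.
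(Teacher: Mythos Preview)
Your proof is correct and follows essentially the same strategy as the paper. The paper defines $\mathbf{W}=(E^s_\cdot E_p^u,\,E^u_\cdot E_p^u-E^u_{\cdot+p},\,E^c_\cdot E_p^u)^T$ and verifies directly that $\mathbf{W}=\mathcal{J}(\mathbf{W})$, whence $\mathbf{W}=0$; you instead show that both $\widetilde{\mathbf{Z}}$ and $\mathbf{Z}^\sharp$ solve the same inhomogeneous fixed point equation and invoke uniqueness, which is the same argument up to subtracting the two candidates, and the supporting computations (your projected identities for $\Pi_s E_p^u$, $\Pi_c E_p^u$, $\Pi_u E_p^u$ and the index split $\sum_{m=0}^{n+p-1}=\sum_{m=0}^{n-1}+\sum_{m=n}^{n+p-1}$) coincide with the paper's formulas (2.64)--(2.71).
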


\begin{proof}
First of all let us notice that since we have $\widehat{\Pi }_{u}=E_{0}^{u}$
the property \textit{(ii)} is a direct consequence of the property\textit{\
(i).} Therefore we will focus on the property \textit{(ii)}.

\noindent The idea of this proof is to derive a suitable closed system of
equations for the following three quantities (wherein $p\in \mathbb{N}$ is
fixed) 
\begin{equation*}
\left\{ E_{n}^{u}E_{p}^{u}-E_{n+p}^{u}\right\} _{n\in \mathbb{N}},\left\{
E_{n}^{s}E_{p}^{u}\right\} _{n\in \mathbb{N}}\ \text{and\ }\left\{
E_{n}^{c}E_{p}^{u}\right\} _{n\in \mathbb{Z}}.
\end{equation*}
Let $p\in \mathbb{N}$ be given and fixed. Then observe that 
\begin{equation*}
\mathbf{W}:=\left(E_{.}^{s}E_{p}^{u},%
\;E_{.}^{u}E_{p}^{u}-E_{.+p}^{u},E_{.}^{c}E_{p}^{u}\right)^T\in\mathcal{X}.
\end{equation*}
\textbf{Equation for }$\left\{ E_{n}^{u}E_{p}^{u}-E_{n+p}^{u}\right\} _{n\in 
\mathbb{N}}$\textbf{:} Let $n\in \mathbb{N}$ and $p\in \mathbb{N}$ be given.
Multiplying the right side of $E_{n}^{u}$ given in (\ref{2.42}) by $%
E_{p}^{u} $ leads us to%
\begin{eqnarray}
E_{n}^{u}E_{p}^{u} &=&A_{u}^{-n}\Pi _{u}E_{p}^{u}+\sum_{m=0}^{+\infty
}A_{u}^{-n-m-1}\Pi _{u}B\left[ E_{m}^{s}E_{p}^{u}+E_{m}^{c}E_{p}^{u}\right]
\label{2.61} \\
&&-\sum_{m=0}^{n-1}\left[ A_{u}^{-m-1}\Pi _{u}\right] BE_{n-m}^{u}E_{p}^{u}+%
\sum_{m=0}^{+\infty }\left[ A_{s}^{m}\Pi _{s}+A_{c}^{m}\Pi _{c}\right]
BE_{n+m+1}^{u}E_{p}^{u}.  \notag
\end{eqnarray}%
Next by using also (\ref{2.42}) and replacing $n$ with $n+p$ we obtain%
\begin{eqnarray}
E_{n+p}^{u} &=&A_{u}^{-n-p}\Pi _{u}+\sum_{m=0}^{+\infty }A_{u}^{-n-p-m-1}\Pi
_{u}B\left[ E_{m}^{s}+E_{m}^{c}\right]  \label{2.62} \\
&&-\sum_{m=0}^{n+p-1}\left[ A_{u}^{-m-1}\Pi _{u}\right] BE_{n+p-m}^{u}+%
\sum_{m=0}^{+\infty }\left[ A_{s}^{m}\Pi _{s}+A_{c}^{m}\Pi _{c}\right]
BE_{n+p+m+1}^{u}.  \notag
\end{eqnarray}%
Therefore by subtracting (\ref{2.62}) from (\ref{2.61}) we get%
\begin{eqnarray}
E_{n}^{u}E_{p}^{u}-E_{n+p}^{u} &=&A_{u}^{-n}\Pi
_{u}E_{p}^{u}-A_{u}^{-n-p}\Pi _{u}  \label{2.63} \\
&&-\sum_{m=0}^{+\infty }A_{u}^{-n-p-m-1}\Pi _{u}B\left[ E_{m}^{s}+E_{m}^{c}%
\right]  \notag \\
&&+\sum_{m=0}^{n+p-1}\left[ A_{u}^{-m-1}\Pi _{u}\right] BE_{n+p-m}^{u}-%
\sum_{m=0}^{n-1}\left[ A_{u}^{-m-1}\Pi _{u}\right] BE_{n-m}^{u}E_{p}^{u} 
\notag \\
&&+\sum_{m=0}^{+\infty }A_{u}^{-n-m-1}\Pi _{u}B\left[
E_{m}^{s}E_{p}^{u}+E_{m}^{c}E_{p}^{u}\right]  \notag \\
&&+\sum_{m=0}^{+\infty }\left[ A_{s}^{m}\Pi _{s}+A_{c}^{m}\Pi _{c}\right] B%
\left[ E_{n+m+1}^{u}E_{p}^{u}-E_{n+p-m}^{u}\right] .  \notag
\end{eqnarray}%
Now note that by using (\ref{2.42}), replacing $n$ with $p$ in order to
obtain $E_{p}^{u}$ and multiply its left side by $A_{u}^{-n}\Pi _{u}$ we
obtain 
\begin{eqnarray*}
A_{u}^{-n}\Pi _{u}E_{p}^{u} &=&A_{u}^{-n-p}\Pi _{u}+\sum_{m=0}^{+\infty
}A_{u}^{-n-p-m-1}\Pi _{u}B\left[ E_{m}^{s}+E_{m}^{c}\right] \\
&&-\sum_{m=0}^{p-1}\left[ A_{u}^{-n-m-1}\Pi _{u}\right] BE_{p-m}^{u},
\end{eqnarray*}%
and since we have%
\begin{equation*}
-\sum_{m=0}^{p-1}\left[ A_{u}^{-n-m-1}\Pi _{u}\right] BE_{p-m}^{u}=-%
\sum_{m=n}^{n+p-1}\left[ A_{u}^{-m-1}\Pi _{u}\right] BE_{n+p-m}^{u},
\end{equation*}%
it follows that%
\begin{eqnarray}
A_{u}^{-n}\Pi _{u}E_{p}^{u} &=&A_{u}^{-n-p}\Pi _{u}+\sum_{m=0}^{+\infty
}A_{u}^{-n-p-m-1}\Pi _{u}B\left[ E_{m}^{s}+E_{m}^{c}\right]  \label{2.64} \\
&&-\sum_{m=n}^{n+p-1}\left[ A_{u}^{-m-1}\Pi _{u}\right] BE_{n+p-m}^{u}. 
\notag
\end{eqnarray}%
Therefore by plugging the expression of $A_{u}^{-n}\Pi _{u}E_{p}^{u}$ given
by (\ref{2.64}) into (\ref{2.63}) and recalling \eqref{2.49} we obtain that%
\begin{equation}  \label{2.65}
E_{.}^{u}E_{p}^{u}-E_{.+p}^{u}=\left(\mathcal{J}_{21},\mathcal{J}_{22},%
\mathcal{J}_{23}\right)\mathbf{W}.
\end{equation}
\textbf{Equation for }$\left\{ E_{n}^{s}E_{p}^{u}\right\} _{n\in \mathbb{N}}$%
\textbf{:} Let $n\in \mathbb{N}$ and $p\in \mathbb{N}$ be given. Then by
using (\ref{2.41}) and multiply the right side of $E_{n}^{s}$ by $E_{p}^{u}$
we obtain%
\begin{eqnarray}
E_{n}^{s}E_{p}^{u} &=&A_{s}^{n}\Pi _{s}E_{p}^{u}-\sum_{m=0}^{+\infty
}A_{s}^{m+n}\Pi _{s}B\left[ E_{m+1}^{u}E_{p}^{u}+E_{-m-1}^{c}E_{p}^{u}\right]
\label{2.66} \\
&&+\sum_{m=0}^{n-1}A_{s}^{n-m-1}\Pi
_{s}BE_{m}^{s}E_{p}^{u}-\sum_{m=0}^{+\infty }\left[ A_{u}^{-m-1}\Pi
_{u}+A_{c}^{-m-1}\Pi _{c}\right] BE_{n+m}^{s}E_{p}^{u}.  \notag
\end{eqnarray}%
Next by replacing $n$ by $p$ in (\ref{2.42}) we obtain $E_{p}^{u}$ and by
multiplying its left side by $A_{s}^{n}\Pi _{s}$ we get%
\begin{equation}
A_{s}^{n}\Pi _{s}E_{p}^{u}=\sum_{m=0}^{+\infty }A_{s}^{n+m}\Pi
_{s}E_{p+m+1}^{u}.  \label{2.67}
\end{equation}%
Then plugging (\ref{2.67}) into (\ref{2.66}) yields%
\begin{equation}  \label{2.68}
E_{.}^{s}E_{p}^{u}=\left(\mathcal{J}_{11},\mathcal{J}_{12},\mathcal{J}%
_{13}\right)\mathbf{W}.
\end{equation}
\textbf{Equation for }$\left\{ E_{n}^{c}E_{p}^{u}\right\} _{n\in \mathbb{Z}}$%
:\textbf{\ }Let $n\in \mathbb{Z}$ and $p\in \mathbb{N\ }$be given. By
multiplying the right side of (\ref{2.43}) by $E_{p}^{u}$ we get 
\begin{eqnarray}
E_{n}^{c}E_{p}^{u} &=&A_{c}^{n}\Pi _{c}E_{p}^{u}-\sum_{m=0}^{+\infty
}A_{c}^{m+n}\Pi _{c}BE_{m+1}^{u}E_{p}^{u}+\sum_{m=0}^{+\infty
}A_{c}^{n-m-1}\Pi _{c}BE_{m}^{s}E_{p}^{u}  \notag \\
&&+\sum_{m=0}^{n-1}A_{c}^{n-m-1}\Pi
_{c}BE_{m}^{c}E_{p}^{u}-\sum_{m=0}^{-n-1}A_{c}^{-m-1}\Pi
_{c}BE_{n+m}^{c}E_{p}^{u}  \label{2.69} \\
&&-\sum_{m=0}^{+\infty }A_{u}^{-m-1}\Pi
_{u}BE_{n+m}^{c}E_{p}^{u}+\sum_{m=0}^{+\infty }A_{s}^{m}\Pi
_{s}BE_{-m-1+n}^{c}E_{p}^{u}.  \notag
\end{eqnarray}%
Next by replacing $n$ by $p$ in (\ref{2.42}) we obatin $E_{p}^{u}$ and by
multiplying its left side by $A_{c}^{n}\Pi _{c}$ we get 
\begin{equation}
A_{c}^{n}\Pi _{c}E_{p}^{u}=\sum_{m=0}^{+\infty }A_{c}^{n+m}\Pi
_{c}E_{p+m+1}^{u},  \label{2.70}
\end{equation}%
Therefore by plugging (\ref{2.70}) into (\ref{2.69}) we get%
\begin{equation}  \label{2.71}
E_{.}^{c}E_{p}^{u}=\left(\mathcal{J}_{31},\mathcal{J}_{32},\mathcal{J}%
_{33}\right)\mathbf{W}.
\end{equation}

Recalling \eqref{2.49}, it follows that $\mathbf{W}$ satisfies $\mathbf{W}=%
\mathcal{J}(\mathbf{W})$. Hence we infer from Lemma \ref{LE2.7} that since $%
C\left\Vert B\right\Vert _{\mathcal{L}\left( X\right)}\leq C\delta <1$, one
has $\mathbf{W}=0_{\mathcal{X}}$ and this completes the proof of the lemma. 
%
%
\end{proof}

\bigskip

\begin{remark}
\label{RE2.14}The arguments for the proof of the next two lemmas are similar
to the arguments used for the proof of Lemma \ref{LE2.13}.
\end{remark}

\begin{lemma}
\label{LE2.15}Let the conditions of Theorem \ref{TH1.4} and \eqref{cond-B}
be satisfied. Then the following properties hold true:

\begin{itemize}
\item[(i)] for each $n,p\in \mathbb{N}$ we have $%
E_{n}^{s}E_{p}^{s}=E_{n+p}^{s}$ and $E_{n}^{u}E_{p}^{s}=0_{\mathcal{L}\left(
X\right) }$ while for each $n\in \mathbb{Z}$,$\ p\in \mathbb{N}$ we have $%
E_{n}^{c}E_{p}^{s}=0_{\mathcal{L}\left( X\right) }$.

\item[(ii)] $\widehat{\Pi }_{s}$ is a bounded linear projector on $X$ and
for each $n\geq 0$ one has $E_n^s(X)\subset \widehat{\Pi }_{s}(X)$.
\end{itemize}
\end{lemma}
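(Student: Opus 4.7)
The plan is to mimic the strategy used in the proof of Lemma \ref{LE2.13}, replacing the role of $E_p^u$ by $E_p^s$. Fix $p\in\mathbb{N}$ and introduce
\begin{equation*}
\mathbf{W}:=\bigl(E_{\cdot}^{s}E_{p}^{s}-E_{\cdot+p}^{s},\;E_{\cdot}^{u}E_{p}^{s},\;E_{\cdot}^{c}E_{p}^{s}\bigr)^{T}.
\end{equation*}
By Proposition \ref{PR2.9}\,(i), the three components belong respectively to $\mathbb{L}_{-\widehat{\rho}}\left(\mathbb{N},\mathcal{L}(X)\right)$, $\mathbb{L}_{-\widehat{\rho}}\left(\mathbb{N},\mathcal{L}(X)\right)$ and $\mathbb{L}_{\widehat{\rho}_0}\left(\mathbb{Z},\mathcal{L}(X)\right)$, so $\mathbf{W}\in\mathcal{X}$ with a norm controlled by $\tfrac{\kappa\delta_0}{\delta_0-\delta}$.

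I would then derive a closed system $\mathbf{W}=\mathcal{J}(\mathbf{W})$. For the first component, apply \eqref{2.41} to $E_n^s$ and multiply on the right by $E_p^s$, then subtract \eqref{2.41} with $n$ replaced by $n+p$; the inhomogeneous term $A_s^n\Pi_s E_p^s-A_s^{n+p}\Pi_s$ must be expanded using \eqref{2.41} applied to $E_p^s$ (multiplied on the left by $A_s^n\Pi_s$) so that it becomes $\sum_{m=0}^{p-1}A_s^{n+p-m-1}\Pi_s BE_m^s + (\text{tail/central terms})$, which after re-indexing combines with the remaining sums to reproduce exactly the row $(\mathcal{J}_{11},\mathcal{J}_{12},\mathcal{J}_{13})$ applied to $\mathbf{W}$. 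For the second component, use \eqref{2.42} for $E_n^u$ multiplied on the right by $E_p^s$ and rewrite $A_u^{-n}\Pi_u E_p^s$ by left-multiplying \eqref{2.41} (written for $E_p^s$) by $A_u^{-n}\Pi_u$; this produces the row $(\mathcal{J}_{21},\mathcal{J}_{22},\mathcal{J}_{23})$ applied to $\mathbf{W}$. The third component is handled identically starting from \eqref{2.43}, using $A_c^n\Pi_c E_p^s$. The bookkeeping is precisely the mirror image of the one carried out in Lemma \ref{LE2.13}, with the only genuine subtlety being the telescoping/re-indexing of the finite sums so that the discrepancy $E_\cdot^s E_p^s-E_{\cdot+p}^s$ (rather than $E_\cdot^s E_p^s$) appears in the first slot.

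Once the identity $\mathbf{W}=\mathcal{J}(\mathbf{W})$ is established, Lemma \ref{LE2.7} together with the smallness assumption \eqref{cond-B} (which ensures $C\|B\|_{\mathcal{L}(X)}\leq C\delta<1$) forces $\mathbf{W}=0_{\mathcal{X}}$. This yields simultaneously $E_n^s E_p^s=E_{n+p}^s$, $E_n^u E_p^s=0$ for $n,p\in\mathbb{N}$, and $E_n^c E_p^s=0$ for $n\in\mathbb{Z}$, $p\in\mathbb{N}$, which is exactly assertion (i).

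Finally, for (ii), specializing the semigroup identity at $n=p=0$ gives $\widehat{\Pi}_s^2=(E_0^s)^2=E_0^s=\widehat{\Pi}_s$, so $\widehat{\Pi}_s$ is a bounded linear projector on $X$. Setting instead $p=n$ arbitrary and $n:=0$ in $E_n^s E_p^s=E_{n+p}^s$ gives $\widehat{\Pi}_s E_n^s=E_n^s$, hence $E_n^s(X)\subset\widehat{\Pi}_s(X)$. The main obstacle, as anticipated, is purely bookkeeping: correctly cancelling the term $A_s^n\Pi_s E_p^s-A_s^{n+p}\Pi_s$ so that the residual sums line up with the operator $\mathcal{J}$ introduced in \eqref{2.49}; no new estimates beyond those of Lemmas \ref{LE2.3}--\ref{LE2.6} are needed.
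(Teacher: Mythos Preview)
Your proposal is correct and follows exactly the paper's own strategy: the paper also fixes $p\in\mathbb{N}$, sets $\mathbf{W}=(E_\cdot^s E_p^s-E_{\cdot+p}^s,\,E_\cdot^u E_p^s,\,E_\cdot^c E_p^s)^T\in\mathcal{X}$, and states that by proceeding as in Lemma~\ref{LE2.13} one obtains $\mathbf{W}=\mathcal{J}(\mathbf{W})$, hence $\mathbf{W}=0$. Your write-up is in fact more detailed than the paper's, which simply refers back to the computations of Lemma~\ref{LE2.13} without spelling out the telescoping/re-indexing you describe.
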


\begin{proof}
First of all let us notice that since we have $\widehat{\Pi }_{s}=E_{0}^{s}$
the property \textit{(ii) }is a direct consequence of the property\textit{\
(i). }Therefore we will focus on the property \textit{(ii)}.

\noindent The idea of this proof is to derive a suitable closed system of
equations for the following three quantities (wherein $p\in \mathbb{N}$ is
fixed): 
\begin{equation*}
\left\{ E_{n}^{s}E_{p}^{s}-E_{n+p}^{s}\right\} _{n\in \mathbb{N}},\left\{
E_{n}^{u}E_{p}^{s}\right\} _{n\in \mathbb{N}}\ \text{and\ }\left\{
E_{n}^{c}E_{p}^{s}\right\} _{n\in \mathbb{Z}}.
\end{equation*}%
Let $p\in\mathbb{N}$ be given and fixed and let us observe that 
\begin{equation*}
\mathbf{W}:=%
\left(E_{.}^{s}E_{p}^{s}-E_{.+p}^{s},E_{.}^{u}E_{p}^{s},E_{.}^{c}E_{p}^{s}%
\right)^T\in\mathcal{X}.
\end{equation*}
By proceeding as in the proof of Lemma \ref{LE2.13} we obtain the following
closed system of equations $\mathbf{W}=\mathcal{J}\left(\mathbf{W}\right)$,
that ensures that $\mathbf{W}=0_{\mathcal{X}}$. This ends the proof of this
lemma. %
\end{proof}

\begin{lemma}
\label{LE2.16}Let the conditions of Theorem \ref{TH1.4} and \eqref{cond-B}
be satisfied. Then the following properties hold true:

\begin{itemize}
\item[(i)] for each $n,p\in \mathbb{Z}$ we have $%
E_{n}^{c}E_{p}^{c}=E_{n+p}^{c}$ and for each $n\in \mathbb{N}$,$\ p\in 
\mathbb{Z}$ we have $E_{n}^{s}E_{p}^{c}=E_{n}^{u}E_{p}^{c}=0_{\mathcal{L}%
\left( X\right) }$.

\item[(ii)] $\widehat{\Pi }_{c}$ is a bounded linear projector on $X$ and
for each $n\in\mathbb{Z}$ one has $E_n^c(X)\subset \widehat{\Pi }_{c}(X)$.
\end{itemize}
\end{lemma}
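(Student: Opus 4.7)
The plan is to mimic closely the strategy of Lemma \ref{LE2.13} and Lemma \ref{LE2.15}, exploiting the fact (Remark \ref{RE2.14}) that these arguments are essentially parallel. Fix $p\in\mathbb{Z}$ and introduce the triple
\begin{equation*}
\mathbf{W}:=\left(E_{.}^{s}E_{p}^{c},\;E_{.}^{u}E_{p}^{c},\;E_{.}^{c}E_{p}^{c}-E_{.+p}^{c}\right)^T.
\end{equation*}
The first task is to verify that $\mathbf{W}\in\mathcal{X}$, which follows from item (i) of Proposition \ref{PR2.9}: each entry has the required exponential decay/growth bounds once we use $\|E_p^c\|_{\mathcal{L}(X)}\leq \frac{\kappa\delta_0}{\delta_0-\delta}e^{\widehat\rho_0|p|}$ and the corresponding bounds on $E^s,E^u,E^c$.

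Next, I would derive a closed fixed-point equation for $\mathbf{W}$. For each of the three components, multiply the corresponding identity (\ref{2.41}), (\ref{2.42}), (\ref{2.43}) on the right by $E_p^c$. This produces expressions of the form $A_s^n\Pi_s E_p^c$, $A_u^{-n}\Pi_u E_p^c$, $A_c^n\Pi_c E_p^c$ plus sums matching those in $\mathcal{J}_{i3}$ applied to $E^{s,u,c}E_p^c$. The key algebraic step is to rewrite the leading terms $A_s^n\Pi_s E_p^c$, $A_u^{-n}\Pi_u E_p^c$ and $A_c^n\Pi_c E_p^c$ using (\ref{2.43}) with $n$ replaced by $p$: specifically, one shows
\begin{equation*}
A_s^n\Pi_s E_p^c=\sum_{m=0}^{+\infty} A_s^{n+m}\Pi_s B E_{-m-1+p}^c,\quad A_u^{-n}\Pi_u E_p^c=-\sum_{m=0}^{+\infty} A_u^{-n-m-1}\Pi_u B E_{p+m}^c,
\end{equation*}
while $A_c^n\Pi_c E_p^c$ combines with an index-shift argument to reconstruct $E_{n+p}^c$ plus remainder terms that fit the $\mathcal{J}_{33}$ template, exactly as in (\ref{2.64}) and its analogues in the proof of Lemma \ref{LE2.13}. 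Substituting these identities back into the three displayed equations and recognizing the resulting right-hand side as the column $(\mathcal{J}_{13},\mathcal{J}_{23},\mathcal{J}_{33})\mathbf{W}$ (up to the bookkeeping of $\chi_\pm$ and $S_-$ as in equations (\ref{2.65}), (\ref{2.68}), (\ref{2.71})) yields the desired identity $\mathbf{W}=\mathcal{J}(\mathbf{W})$.

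Once $\mathbf{W}=\mathcal{J}(\mathbf{W})$ is established, the conclusion is immediate from Lemma \ref{LE2.7}: under the smallness condition \eqref{cond-B} we have $C\|B\|_{\mathcal{L}(X)}\leq C\delta<1$, so $\mathcal{J}$ is a strict contraction on $\mathcal{X}$, and its only fixed point is $0_\mathcal{X}$. This forces $E_n^c E_p^c=E_{n+p}^c$ for all $n,p\in\mathbb{Z}$ and $E_n^s E_p^c=E_n^u E_p^c=0$ for all $n\in\mathbb{N}$, $p\in\mathbb{Z}$, proving (i). For (ii), setting $n=p=0$ gives $\widehat\Pi_c^2=\widehat\Pi_c$, and setting $n=0$ in $E_n^c E_p^c = E_{n+p}^c$ together with $E_p^c E_0^c = \widehat\Pi_c E_p^c$ (obtained from the corresponding identity with reversed roles) shows $E_p^c(X)\subset \widehat\Pi_c(X)$.

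The main obstacle, as in Lemma \ref{LE2.13}, is purely book-keeping: the index range for $p$ is now all of $\mathbb{Z}$, so when rewriting $A_c^n\Pi_c E_p^c$ one must carefully split the two-sided sum in (\ref{2.43}) (the terms $\sum_{m=0}^{n-1}$ versus $\sum_{m=0}^{-n-1}$ according to the sign of $n+p$) and verify that index translations line up to reconstitute precisely $E_{n+p}^c$ plus the $\mathcal{J}_{33}$-remainder. Provided this algebraic matching is carried out carefully, the remainder of the argument is a verbatim adaptation of the proof of Lemma \ref{LE2.13}.
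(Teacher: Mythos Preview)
Your proposal is correct and follows exactly the same route as the paper: introduce $\mathbf{W}=(E_.^sE_p^c,\,E_.^uE_p^c,\,E_.^cE_p^c-E_{.+p}^c)^T\in\mathcal{X}$, show $\mathbf{W}=\mathcal{J}(\mathbf{W})$ by the same manipulations as in Lemma~\ref{LE2.13}, and conclude $\mathbf{W}=0$ from the contraction estimate. One small slip: the right-hand side you obtain is not just the third column $(\mathcal{J}_{13},\mathcal{J}_{23},\mathcal{J}_{33})$ acting on $\mathbf{W}$ but the full matrix $\mathcal{J}$ applied to $\mathbf{W}$ (the sums produced by right-multiplying by $E_p^c$ involve all three components $E_m^sE_p^c,\,E_m^uE_p^c,\,E_m^cE_p^c$), exactly as in (\ref{2.65}), (\ref{2.68}), (\ref{2.71}); your final identity $\mathbf{W}=\mathcal{J}(\mathbf{W})$ is the correct one.
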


\begin{proof}
First of all let us notice that since we have $\widehat{\Pi }_{c}=E_{0}^{c}$
the property \textit{(ii) }is a direct consequence of the property\textit{\
(i). }Therefore we will focus on the property \textit{(ii)}.

\noindent The idea of this proof is to derive a suitable closed system of
equations for the following three quantities (wherein $p\in \mathbb{N}$ is
fixed): 
\begin{equation*}
\left\{ E_{n}^{c}E_{p}^{c}-E_{n+p}^{c}\right\} _{n\in \mathbb{Z}},\left\{
E_{n}^{u}E_{p}^{c}\right\} _{n\in \mathbb{N}}\ \text{and\ }\left\{
E_{n}^{s}E_{p}^{c}\right\} _{n\in \mathbb{N}}.
\end{equation*}
Let $p\in\mathbb{Z}$ be given and fixed and observe that: 
\begin{equation*}
\mathbf{W}:=%
\left(E_{.}^{s}E_{p}^{c},E_{.}^{u}E_{p}^{c},E_{.}^{c}E_{p}^{c}-E_{.+p}^{c}%
\right)^T\in\mathcal{X}.
\end{equation*}
By proceeding as in the proof of Lemma \ref{LE2.13} we obtain the following
closed system of equations $\mathbf{W}=\mathcal{J}\left(\mathbf{W}\right)$.
This completes the proof of this lemma. 
\end{proof}


\subsection{Proof of Theorem \protect\ref{TH1.4}}

In this section we complete the proof of Theorem \ref{TH1.4}. The main
points are summarized in the following lemma. Note that the proof of Theorem %
\ref{TH1.4} becomes a direct consequence of Proposition \ref{PR2.9} and
Lemma \ref{LE2.17} below.

\begin{lemma}
\label{LE2.17}Let us assume that the conditions of Theorem \ref{TH1.4} are
satisfied. Up to reduce the value of $\delta _{0}$ provided by Proposition %
\ref{PR2.9} so that $\delta _{0}<\min \left(C^{-1}, \frac{1-C^{-1}}{6\kappa
^{3}+1-C^{-1}},,\sqrt{2}-1\right) $, if $B\in \mathcal{L}(X)$ satisfies 
\begin{equation*}
\left\Vert B\right\Vert _{\mathcal{L}\left( X\right) }\leq \delta ,\ \text{%
with }\delta \in \left( 0,\frac{\delta _{0}^{2}}{\kappa +\delta _{0}}\right)
\end{equation*}%
then the following properties hold:

\begin{itemize}
\item[(i)] The three bounded linear projectors $\widehat{\Pi }_{s},\widehat{%
\Pi }_{u}$ and $\widehat{\Pi }_{c}$ provided by Lemmas \ref{LE2.13}, \ref%
{LE2.15} and \ref{LE2.16} satisfy%
\begin{equation}
\widehat{\Pi }_{k}\widehat{\Pi }_{l}=0_{\mathcal{L}\left( X\right) }\text{
if }k\neq l,\ \text{with }k,l=s,u,c,  \label{2.78}
\end{equation}%
and%
\begin{equation}
\left\Vert \widehat{\Pi }_{k}-\Pi _{k}\right\Vert _{\mathcal{L}\left(
X\right) }\leq \frac{\kappa \delta }{\delta _{0}-\delta }\leq \delta _{0}.
\label{2.79}
\end{equation}

\item[(ii)] For each $n\in \mathbb{N}$ and $k=s,c$ we have $E_{n}^{k}=\left(
A+B\right) ^{n}\widehat{\Pi }_{k}\in \mathcal{L}\left(X,\widehat{\Pi }%
_{k}(X)\right).$

\item[(iii)] For each $n\in \mathbb{N}$ $\left( A+B\right) ^{n}\widehat{\Pi }%
_{c}$ is invertible from $\widehat{\Pi }_{c}\left( X\right) $ into $\widehat{%
\Pi }_{c}\left( X\right) $ with%
\begin{equation}
E_{-n}^{c}\left( A+B\right) ^{n}\widehat{\Pi }_{c}=\left( A+B\right) ^{n}%
\widehat{\Pi }_{c}E_{-n}^{c}=\widehat{\Pi }_{c}.  \label{2.80}
\end{equation}

\item[(iv)] One has $\left(A+B\right)\left(D(A)\cap \widehat{\Pi}%
_u(X)\right)\subset \widehat{\Pi}_u(X)$. Consider $(A+B)_u:D(A)\cap \widehat{%
\Pi}_u(X)\subset \widehat{\Pi}_u(X)\to \widehat{\Pi}_u(X)$ the part of $%
(A+B) $ in $\widehat{\Pi}_u(X)$. Then one has $0\in \rho\left(
(A+B)_u\right) $ and for each $n\geq 0$: 
\begin{equation}
E_{n}^{u}=\left( (A+B)_u\right) ^{-n}\widehat{\Pi }_{u}.  \label{2.81}
\end{equation}

\item[(v)] For $k=s,u,c$, the projector $\widehat{\Pi }_{k}$ satisfies $%
\widehat{\Pi }_{k}\left(D(A)\right)\subset D(A)$ and 
\begin{equation}
\left( A+B\right) \widehat{\Pi }_{k}x=\widehat{\Pi }_{k}\left(
A+B\right)x,\;\forall x\in D(A) .  \label{2.82}
\end{equation}
\end{itemize}
\end{lemma}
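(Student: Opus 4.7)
The plan is to assemble the five claims by combining Proposition~\ref{PR2.9} and Lemmas~\ref{LE2.13}--\ref{LE2.16} with one new ingredient: applying the closed operator $A$ term by term to each fixed-point series, justified by the trichotomy decomposition $D(A)=X_s\oplus X_c\oplus(D(A)\cap X_u)$ and Proposition~\ref{PR2.9}(iii). I will treat the claims in the order (i), (ii), (iii), (v), (iv), because the commutation property (v) is what closes the left-invertibility in (iv).

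Claim (i) is essentially immediate. The orthogonality \eqref{2.78} is the $n=p=0$ case of Lemmas~\ref{LE2.13}--\ref{LE2.16}(i) since $\widehat{\Pi}_k=E_0^k$. Because $\widehat{\Pi}_s$ and $\widehat{\Pi}_u$ are projectors with $\widehat{\Pi}_s\widehat{\Pi}_u=\widehat{\Pi}_u\widehat{\Pi}_s=0$, the operator $\widehat{\Pi}_c:=I-\widehat{\Pi}_s-\widehat{\Pi}_u$ is automatically a projector orthogonal to both, giving the direct-sum decomposition $X=\widehat{X}_s\oplus\widehat{X}_c\oplus\widehat{X}_u$. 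Estimate \eqref{2.79} is Proposition~\ref{PR2.9}(ii) at $n=0$; since $\delta_0<\sqrt{2}-1$, Lemma~\ref{LE2.1} yields the isomorphism $\widehat{X}_k\simeq X_k$ via $\Pi_k|_{\widehat{X}_k}$.

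Claims (ii), (iii), and the right-inverse half of (iv) rest on the evolution identity
\begin{equation*}
(A+B)E_n^s=E_{n+1}^s,\qquad (A+B)E_n^c=E_{n+1}^c,\qquad (A+B)E_{n+1}^u=E_n^u,\qquad n\geq 0.
\end{equation*}
Each identity is obtained by applying $A$ term by term to the right-hand side of the corresponding formula \eqref{2.41}--\eqref{2.43}, using the shifts $A\,A_s^m\Pi_s=A_s^{m+1}\Pi_s$, $A\,A_u^{-m-1}\Pi_u=A_u^{-m}\Pi_u$, $A\,A_c^{-m-1}\Pi_c=A_c^{-m}\Pi_c$. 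Term-by-term application is legitimate since every summand sits in $D(A)$ by the decomposition of $D(A)$, the resulting $A$-image series remains absolutely summable in $X$ (the exponential factors absorb the shift), and $A$ is closed. A re-indexing identifies the transformed series with the fixed-point formula at the next time step. Induction then yields $E_n^{s,c}=(A+B)^n\widehat{\Pi}_{s,c}$ (claim (ii)); combining with $E_n^cE_{-n}^c=E_{-n}^cE_n^c=\widehat{\Pi}_c$ from Lemma~\ref{LE2.16} produces \eqref{2.80} (claim (iii)); and evaluating the $u$-identity at $n=0$ gives $(A+B)E_1^u=\widehat{\Pi}_u$, which together with $\widehat{\Pi}_u E_1^u=E_1^u$ (Lemma~\ref{LE2.13}(i)) and $E_1^u(X)\subset D(A)$ (Proposition~\ref{PR2.9}(iii)) exhibits $E_1^u|_{\widehat{X}_u}$ as a right inverse of $(A+B)$ on $\widehat{X}_u$.

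What remains, and what I expect to be the main obstacle, is claim (v); once it is proved the invariance needed to finish (iv) follows at once. My plan mirrors Lemmas~\ref{LE2.13}--\ref{LE2.16}: for $k\in\{s,c\}$ introduce the commutator operators
\begin{equation*}
c_k(x):=E_1^k x - \widehat{\Pi}_k(A+B)x,\qquad x\in D(A),
\end{equation*}
and lift them to a triple $\mathbf{W}=(W_\cdot^s,W_\cdot^u,W_\cdot^c)\in\mathcal{X}$ formed by pre-composing each sequence $E_\cdot^\alpha$ with the $c_k$'s and taking differences, exactly as in the proofs of Lemmas~\ref{LE2.13}--\ref{LE2.16}. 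Boundedness of $\mathbf{W}$ in $\mathcal{X}$ follows from closedness of $A$ applied to the bounded operators $E_n^\alpha$ (which forces $AE_n^\alpha\in\mathcal{L}(X)$ via the evolution identity) combined with Proposition~\ref{PR2.9}(i). A direct but tedious computation from \eqref{2.41}--\eqref{2.43}, using the evolution identities and the orthogonality already established, shows that $\mathbf{W}=\mathcal{J}(\mathbf{W})$ in the sense of \eqref{2.49}. Since $C\|B\|<1$, Lemma~\ref{LE2.7} forces $\mathbf{W}=0$, hence $c_s\equiv c_c\equiv 0$; this is precisely \eqref{2.82} for $k=s,c$, and the case $k=u$ follows by subtracting from $I=\widehat{\Pi}_s+\widehat{\Pi}_c+\widehat{\Pi}_u$. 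Commutation then gives $\widehat{\Pi}_s(A+B)\widehat{\Pi}_u=\widehat{\Pi}_c(A+B)\widehat{\Pi}_u=0$, so $(A+B)$ preserves $\widehat{X}_u\cap D(A)$ and $(A+B)_u$ is well-defined; the right inverse $E_1^u$ becomes a two-sided inverse, and iteration produces \eqref{2.81}. The delicate point throughout is the careful justification of every swap between the unbounded $A$ and an infinite sum, which invariably reduces to closedness of $A$ together with the quantitative exponential estimates of Proposition~\ref{PR2.9}(i).
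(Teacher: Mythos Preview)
Your treatment of (i), (ii), (iii) and the identity $(A+B)E_1^u=\widehat{\Pi}_u$ matches the paper. The gap lies in your reordering: you try to obtain (v) first via a commutator fixed-point argument and then deduce (iv), whereas the paper proves (iv) first by a direct perturbation argument and gets (v) in two lines.

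Your fixed-point scheme for (v) does not go through. The commutators $c_k(x)=E_1^kx-\widehat{\Pi}_k(A+B)x$ are defined only on $D(A)$ and are genuinely unbounded there: writing $c_k=[A,\widehat{\Pi}_k]+[B,\widehat{\Pi}_k]$ and using $[A,\Pi_k]=0$, the contribution $(\widehat{\Pi}_k-\Pi_k)A$ survives and is unbounded. Hence the $c_k$ cannot be packaged into an element of $\mathcal{X}$ (sequences of \emph{bounded} operators on $X$), and Lemma~\ref{LE2.7} is unavailable. The description of $\mathbf{W}$ is also too vague to verify that $\mathbf{W}=\mathcal{J}(\mathbf{W})$.

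Even granting (v), your argument for (iv) is incomplete: $(A+B)E_1^u=\widehat{\Pi}_u$ together with invariance only shows that $(A+B)_u$ is surjective with right inverse $E_1^u|_{\widehat{X}_u}$; injectivity is never established, so ``becomes a two-sided inverse'' is an assertion, not a proof. The paper closes this by proving directly that $C_u:=\widehat{\Pi}_u(A+B)|_{D(A)\cap\widehat{X}_u}$ is invertible: one writes $C_u=\widetilde{A}_u+L_u$ with $\widetilde{A}_u=\widehat{\Pi}_uA_u\Pi_u$, invertible thanks to the isomorphisms of Lemma~\ref{LE2.1}, and checks $\|L_u\|\,\|\widetilde{A}_u^{-1}\|<1$. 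It is exactly here that the extra smallness $\delta_0<\frac{1-C^{-1}}{6\kappa^{3}+1-C^{-1}}$ from the statement is consumed; your proposal never touches this condition, which is a strong signal that the hard step has been skipped. Once $C_u$ is invertible, $C_uE_1^u|_{\widehat{X}_u}=I$ forces $E_1^u|_{\widehat{X}_u}=C_u^{-1}$ to be a bijection onto $D(A)\cap\widehat{X}_u$, yielding both the invariance $(A+B)(D(A)\cap\widehat{X}_u)\subset\widehat{X}_u$ and $0\in\rho((A+B)_u)$. With (iv) in hand, (v) is immediate: $(A+B)\widehat{\Pi}_l$ now lands in $\widehat{X}_l$ for every $l$, so for $x\in D(A)$ one has $\widehat{\Pi}_k(A+B)x=\sum_l\widehat{\Pi}_k(A+B)\widehat{\Pi}_lx=\widehat{\Pi}_k(A+B)\widehat{\Pi}_kx=(A+B)\widehat{\Pi}_kx$.
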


\begin{proof}
\textbf{Proof of \textit{(i)}:} By recalling that $E_{0}^{k}=\widehat{\Pi }%
_{k}$ it follows from Lemmas \ref{LE2.13}, \ref{LE2.15} and \ref{LE2.16}
that (\ref{2.78}) holds true. Moreover the condition (ii) of Proposition \ref%
{PR2.9} together with $\delta \in \left( 0,\frac{\delta _{0}^{2}}{\kappa
+\delta _{0}}\right) $ provide that%
\begin{equation}
\left\Vert \widehat{\Pi }_{k}-\Pi _{k}\right\Vert _{\mathcal{L}\left(
X\right) }\leq \frac{\kappa \delta }{\delta _{0}-\delta }\leq \delta _{0}\in
\left( 0,\sqrt{2}-1\right) .  \label{2.83}
\end{equation}%
This completes the proof of \textit{(i).}

\noindent \textbf{Proof of \textit{(ii)}: }Let\textbf{\ }$n\in \mathbb{N}%
\backslash \left\{ 0\right\} $ be given. We will first prove that $%
E_{n}^{s}=\left( A+B\right) ^{n}\widehat{\Pi }_{s}$. By replacing $n$ by $%
n-1 $ in (\ref{2.41}), recalling that $E_{n-1}^s(X)\subset \widehat{\Pi}%
_s(X)\cap D(A)$ (see Proposition \ref{PR2.9} $(iii)$) and multiplying the
left side of $E_{n-1}^{s}$ by $A$ it follows that%
\begin{equation}
AE_{n-1}^{s}=E_{n}^{s}-BE_{n-1}^{s}\Longleftrightarrow E_{n}^{s}=\left(
A+B\right) E_{n-1}^{s}.  \notag
\end{equation}
Hence by induction (see Proposition \ref{PR2.9} $(iii)$) one obtains that
for each $n\geq 0$: $(A+B)^n\widehat{\Pi}_s(X)\subset D(A)$, $(A+B)^n%
\widehat{\Pi}_s\in \mathcal{L}\left(X,\widehat{\Pi}_s(X)\right)$ and 
\begin{equation*}
E_{n}^{s}=\left( A+B\right) ^{n}E_{0}^{s}=\left( A+B\right) ^{n}\widehat{\Pi 
}_{s}.
\end{equation*}%
Next we prove that $E_{n}^{c}=\left( A+B\right) ^{n}\widehat{\Pi }_{c}$ for
each $n\in \mathbb{N}$. Let $n\in \mathbb{N}\backslash \left\{ 0\right\} $
be given. By replacing $n$ by $n-1$ in (\ref{2.43}), recalling that $%
E_{n-1}^c(X)\subset D(A)$ and multiplying the left side of $E_{n-1}^{c}$ by $%
A$ we obtain%
\begin{equation*}
AE_{n-1}^{c}=E_{n}^{c}-BE_{n-1}^{c}\Longleftrightarrow E_{n}^{c}=\left(
A+B\right) E_{n-1}^{c},
\end{equation*}%
providing that%
\begin{equation}
E_{n}^{c}=\left( A+B\right) ^{n}\widehat{\Pi }_{c}\in \mathcal{L}\left(X,%
\widehat{\Pi}_c(X)\right).  \label{2.84}
\end{equation}%
This completes the proof of \textit{(ii)}.

\noindent \textbf{Proof of \textit{(iii)}: } Let us prove that for each $%
n\in \mathbb{N}$ the bounded linear operator $\left( A+B\right) ^{n}\widehat{%
\Pi }_{c}$ is invertible from $\widehat{\Pi }_{c}\left( X\right) $ into $%
\widehat{\Pi }_{c}\left( X\right) .$

In fact each $n\in \mathbb{N}$ by using Lemma \ref{LE2.16} combined together
with (\ref{2.84}) we obtain%
\begin{equation*}
E_{-n}^{c}\left( A+B\right) ^{n}\widehat{\Pi }%
_{c}=E_{-n}^{c}E_{n}^{c}=E_{0}^{c}=\widehat{\Pi }_{c}=E_{n}^{c}E_{-n}^{c}=%
\left( A+B\right) ^{n}\widehat{\Pi }_{c}E_{-n}^{c}.
\end{equation*}%
This prove that $\left( A+B\right) ^{n}\widehat{\Pi }_{c}$ is invertible
from $\widehat{\Pi }_{c}\left( X\right) $ into $\widehat{\Pi }_{c}\left(
X\right) $ and (\ref{2.80}) holds true.

\noindent \textbf{Proof of \textit{(iv)}: } In order to prove this point we
claim that

\begin{claim}
\label{claim++} The following holds true:

\begin{itemize}
\item[(a)] Recalling that $E_1^u(X)\subset D(A)$ one has $(A+B)E_1^u=%
\widehat{\Pi}_u$.

\item[(b)] Consider the closed linear operator $C_u:D(C_u)\subset \widehat{%
\Pi}_u(X)\to \widehat{\Pi}_u(X)$ defined by $D\left(C_u\right)=D(A)\cap 
\widehat{\Pi}_u(X)$ and $C_u =\widehat{\Pi}_u\left(A+B\right)$. Then it
satisfies $0\in \rho\left(C_u\right)$.
\end{itemize}
\end{claim}

Before proving this claim let us complete the proof of \eqref{2.81}. To do
so let us first notice that $(a)$ and $(b)$ implies that 
\begin{equation*}
\left(A+B\right)\left(D(A)\cap \widehat{\Pi}_u(X)\right)=(A+B)\left(C_u^{-1}%
\left(\widehat{\Pi}_u(X)\right)\right)=\widehat{\Pi}_u(X).
\end{equation*}
Hence the linear operator $\left(C_u,D(C_u)\right)$ coincide the part $%
\left(A+B\right)_u$ of $(A+B)$ in $\widehat{\Pi}_u(X)$. Therefore $0\in
\rho\left(\left(A+B\right)_u\right)$ and using $(a)$ and the orthogonality
of the perturbed projectors one gets: 
\begin{equation*}
E_1^u=\left(\left(A+B\right)_u\right)^{-1}\widehat{\Pi}_u.
\end{equation*}
Finally due to the semiflow property for $E_n^u$ one gets 
\begin{equation*}
E_n^u=\left(\left(A+B\right)_u\right)^{-n}\widehat{\Pi}_u,\;\;n\geq 0,
\end{equation*}
and \eqref{2.81} follows.

It remains to prove Claim \ref{claim++}.

\noindent \textbf{Proof of $(a)$:} Let us first recall that $E_1(X)\subset
D(A)$ and let us multiply the left side of $E_{1}^{u}$ given in (\ref{2.42})
by $A$ to obtain%
\begin{equation*}
AE_{1}^{u}=E_{0}^{u}-BE_{1}^{u}\Longleftrightarrow \left( A+B\right)
E_{1}^{u}=E_{0}^{u},
\end{equation*}
that completes the proof of $(a)$.

\noindent \textbf{Proof of $(b)$:} Before proceeding to the proof of this
statement let us notice that since we have $E_{0}^{k}=\widehat{\Pi }_{k},$ $%
k=s,u,c$ it follows from the condition \textit{(i)} of Proposition \ref%
{PR2.9} that 
\begin{equation}
\left\Vert \widehat{\Pi }_{k}\right\Vert _{\mathcal{L}\left( X\right) }\leq 
\frac{\kappa \delta _{0}}{\delta _{0}-\delta },\ k=s,u,c.  \label{2.89}
\end{equation}%
Now recalling that $D(A)=X_s\oplus X_c\oplus \left(X_u\cap D(A)\right)$ one
has that for each $x\in D(A)\cap \widehat{\Pi}_u(X)$: 
\begin{equation*}
x=\Pi_s x+\Pi_c x+\Pi_u x,
\end{equation*}
so that $\Pi_u x\in X_u\cap D(A)$. This re-writes as $\Pi_u\left(D(A)\cap 
\widehat{\Pi}_u(X)\right)\subset D(A)\cap \Pi_u(X)$. Hence one has 
\begin{equation*}
C_u=\widehat{\Pi }_{u}A\left[ \Pi _{u}+\Pi _{s}+\Pi _{c}\right] +\widehat{%
\Pi }_{u}B.
\end{equation*}
This re-writes as 
\begin{equation}  \label{eq-pert}
C_u=\widetilde A_u+ L_u,
\end{equation}
wherein we have set $\widetilde A_u:D(C_u)\subset \widehat{\Pi}_u(X)\to 
\widehat{\Pi}_u(X)$ defined as 
\begin{equation*}
\widetilde A_u x=\widehat{\Pi }_{u}A\Pi_u x,\;\;\forall x\in D(A)\cap 
\widehat{\Pi}_u(X),
\end{equation*}
and $L_u\in \mathcal{L}\left(\widehat{\Pi}_u(X)\right)$ defined by: 
\begin{equation*}
L_{u}:=\widehat{\Pi }_{u}A_s\Pi _{s}\widehat{\Pi }_{u}+\widehat{\Pi }%
_{u}A_c\Pi _{c}\widehat{\Pi }_{u}+\widehat{\Pi }_{u}B\widehat{\Pi }_{u}.
\end{equation*}%
Next observe that due to (\ref{2.83}) Lemma \ref{LE2.1} applies to $\Pi _{u}$
and $\widehat{\Pi }_{u}$ and provides that $\Pi _{u}|_{\widehat{\Pi }%
_{u}\left( X\right) }$ is an isomorphism from $\widehat{\Pi }_{u}\left(
X\right) $ onto $\Pi _{u}\left( X\right) $ while $\widehat{\Pi }_{u}|_{\Pi
_{u}\left( X\right) }$ is an isomorphism from $\Pi _{u}\left( X\right) $
onto $\widehat{\Pi }_{u}\left( X\right) $. One furthermore has 
\begin{equation}  \label{isom1}
\left\Vert \left( \widehat{\Pi }_{u}|_{\Pi _{u}\left( X\right) }\right)
^{-1}x\right\Vert \leq \frac{1}{1-\delta _{0}}\left\Vert x\right\Vert ,\
\forall x\in \Pi _{u}\left( X\right) ,
\end{equation}%
and%
\begin{equation}  \label{isom2}
\left\Vert \left( \Pi _{u}|_{\widehat{\Pi }_{u}\left( X\right) }\right)
^{-1}x\right\Vert \leq \frac{1}{1-\delta _{0}}\left\Vert x\right\Vert ,\
\forall x\in \widehat{\Pi }_{u}\left( X\right) .
\end{equation}%
Then due to the above isomorphism one has 
\begin{equation*}
\Pi_u\left(D(A)\cap \widehat{\Pi}_u(X)\right)=D(A)\cap \Pi_u(X).
\end{equation*}
Indeed first note that inclusion $\subset$ has already been observed.
Consider $x\in D(A)\cap \Pi_u(X)$. Then there exists a unique $y\in \widehat{%
\Pi}_u(X)$ such that $\Pi_u(y)=x$. Then we write $y=\Pi_s y+ \Pi_c y+\Pi_u y$%
. Since $D(A)=X_s\oplus X_c\oplus \left(D(A)\cap X_u\right)$ and $\Pi_u
y=x\in D(A)\cap \Pi_u(X)$ one obtains that $y\in D(A)\cap \widehat{\Pi}_u(X)$
and $x\in \Pi_u\left(D(A)\cap \widehat{\Pi}_u(X)\right)$ and the equality
follows.

As a consequence one gets: 
\begin{equation*}
D\left(C_u\right)=D(A)\cap \widehat{\Pi}_u(X)=\left( \Pi _{u}|_{\widehat{\Pi 
}_{u}\left( X\right) }\right) ^{-1}\left(D(A)\cap \Pi_u(X)\right).
\end{equation*}
Using this relation and recalling that $0\in \rho\left(A_u\right)$ it is
easy to check that $0\in \rho\left(\widetilde{A}_u\right)$ and 
\begin{equation*}
\left(\widetilde{A}_u\right)^{-1}=\left(\Pi_{u}|_{\widehat{\Pi }_{u}\left(
X\right) }\right) ^{-1}\circ A_u^{-1} \circ \left( \widehat{\Pi }%
_{u}|_{\Pi_{u}(X)}\right)^{-1}.
\end{equation*}

Finally due to \eqref{eq-pert}, in order to complete the proof of point $(b)$
it is sufficient to check that 
\begin{equation*}
\left\Vert L_u\right\Vert _{\mathcal{L}\left( \widehat{\Pi }_{u}\left(
X\right) \right) }\left\Vert \widetilde A_u^{-1}\right\Vert_{\mathcal{L}%
\left( \widehat{\Pi }_{u}\left( X\right) \right) }<1.
\end{equation*}
To do so let us first notice that due to \eqref{isom1}-\eqref{isom2} one has 
\begin{equation}  \label{2.90}
\left\Vert \widetilde A_u^{-1}\right\Vert_{\mathcal{L}\left(\widehat{\Pi }%
_{u}\left( X\right)\right)} \leq \frac{1}{\left( 1-\delta _{0}\right)^2 }%
\left\Vert A_{u}^{-1}\right\Vert _{\mathcal{L}\left( \Pi_u(X)\right) }\leq 
\frac{1}{\left( 1-\delta _{0}\right)^2 }\kappa e^{-\rho }.
\end{equation}
On the other one has 
\begin{eqnarray*}
L_{u} &=&\widehat{\Pi }_{u}A\Pi _{s}\widehat{\Pi }_{u}+\widehat{\Pi }%
_{u}A\Pi _{c}\widehat{\Pi }_{u}+\widehat{\Pi }_{u}B\widehat{\Pi }_{u} \\
&=&\widehat{\Pi }_{u}A_{s}\Pi _{s}\left[ \widehat{\Pi }_{u}-\Pi _{u}\right] +%
\widehat{\Pi }_{u}A_{c}\Pi _{c}\left[ \widehat{\Pi }_{u}-\Pi _{u}\right] +%
\widehat{\Pi }_{u}B\widehat{\Pi }_{u}.
\end{eqnarray*}%
Then by using (\ref{2.79}) and (\ref{2.89}) and recalling that $\left\Vert
B\right\Vert _{\mathcal{L}\left( X\right) }\leq \delta _{0}$, it follows
that 
\begin{equation}  \label{2.91}
\left\Vert L_{u}\right\Vert _{\mathcal{L}\left( \widehat{\Pi }_{u}\left(
X\right) \right) } \leq 2\kappa ^{2}\delta _{0}+2\kappa ^{2}e^{\rho
_{0}}+2\kappa \delta _{0} \leq 6\kappa ^{2}e^{\rho _{0}}\delta _{0}.
\end{equation}%
Now combining (\ref{2.90}) together with (\ref{2.91}) provides that 
\begin{equation*}
\left\Vert L_u\right\Vert _{\mathcal{L}\left( \widehat{\Pi }_{u}\left(
X\right) \right) }\left\Vert \widetilde A_u^{-1}\right\Vert_{\mathcal{L}%
\left( \widehat{\Pi }_{u}\left( X\right) \right) } \leq \frac{6\kappa ^{3}}{%
\left( 1-\delta _{0}\right)^2 }\delta _{0}.
\end{equation*}
Hence up to reduce $\delta _{0}$ such that 
\begin{equation}
\delta _{0}<\min \left\{ C^{-1},\frac{1-C^{-1}}{6\kappa ^{3}+1-C^{-1}}%
\right\} ,  \label{2.92}
\end{equation}%
where $C>1$ is the constant provided by Lemma \ref{LE2.7} we obtain that 
\begin{equation*}
\left\Vert L_u\right\Vert _{\mathcal{L}\left( \widehat{\Pi }_{u}\left(
X\right) \right) }\left\Vert \widetilde A_u^{-1}\right\Vert_{\mathcal{L}%
\left( \widehat{\Pi }_{u}\left( X\right) \right) }<1.
\end{equation*}
This completes the proof of Claim \ref{claim++} $(b)$ and also the proof of 
\textit{(iv).}

\noindent \textbf{Proof of \textit{(v)}:} Let us first notice that the
inclusions $\widehat{\Pi}_k(X)\subset D(A)$ for any $k=s,c$ and $\widehat{\Pi%
}_u(D(A))\subset D(A)$ have been observed in Proposition \ref{PR2.9} $(iii)$%
. Next recall that by \textit{(ii)} we have 
\begin{equation*}
E_{1}^{k}=\left( A+B\right) \widehat{\Pi }_{k},\ \forall k=s,c,
\end{equation*}%
so that 
\begin{equation*}
\widehat{\Pi }_{k}\left( A+B\right) \widehat{\Pi }_{k}=\widehat{\Pi }%
_{k}E_{1}^{k}=E_{0}^{k}E_{1}^{k}=E_{1}^{k}=\left( A+B\right) \widehat{\Pi }%
_{k},
\end{equation*}%
that is 
\begin{equation}
\left( A+B\right) \widehat{\Pi }_{k}=\widehat{\Pi }_{k}\left( A+B\right) 
\widehat{\Pi }_{k},\ k=s,c.  \label{2.93}
\end{equation}%
Moreover the property \textit{(iii)} implies that $\left( A+B\right) 
\widehat{\Pi }_{u}$ maps $D(A)$ into $\widehat{\Pi }_{u}$, that is for any $%
x\in D(A)$: 
\begin{equation}
\left( A+B\right) \widehat{\Pi }_{u}x=\widehat{\Pi }_{u}\left( A+B\right) 
\widehat{\Pi }_{u}x.  \label{2.94}
\end{equation}%
Therefore for each $k=s,c,u$ by using (\ref{2.93}) and (\ref{2.94}) combined
together with the orthogonality property in (\ref{2.78}) we obtain for each $%
x\in D(A)$: 
\begin{eqnarray*}
\widehat{\Pi }_{k}\left( A+B\right)x &=&\widehat{\Pi }_{k}\left( A+B\right) %
\left[ \widehat{\Pi }_{s}+\widehat{\Pi }_{u}+\widehat{\Pi }_{c}\right]x \\
&=&\widehat{\Pi }_{k}\left( A+B\right) \widehat{\Pi }_{k}x=\left( A+B\right) 
\widehat{\Pi }_{k}x.
\end{eqnarray*}%
This completes the proof of this lemma
\end{proof}

\section{Proof of Theorem \protect\ref{TH1.8}}

The aim of this section is to complete the proof of Theorem \ref{TH1.8}.

Let $q\in [1,\infty]$ be given. Recall that we denote the Banach space $%
X=l^{q}(\mathbb{Z};Y)$. Recall also the definition of the linear operator $%
\left(\mathcal{A},D\left(\mathcal{A}\right)\right)$ in \eqref{operateur_A}.
Next let us consider the three bounded linear operators $\mathcal{P}_{\alpha
}\in \mathcal{L}(X)$ defined for $\alpha =s,c,u$ by 
\begin{equation*}
\left( \mathcal{P}_{\alpha }u\right) _{k}=\Pi _{k}^{\alpha
}u_{k},\;\;\forall k\in \mathbb{Z},\;\forall u\in X.
\end{equation*}%
Using the above notations let us notice that for each $\alpha =s,c,u$, $%
\mathcal{P}_{\alpha }$ is a projector on $X$ that satisfies

\begin{itemize}
\item $\mathcal{P}_{\alpha }\mathcal{P}_{\beta }=0_{\mathcal{L}(X)}$ for all 
$\alpha \neq \beta $.

\item $\mathcal{P}_{s}+\mathcal{P}_{c}+\mathcal{P}_{u}=I_{\mathcal{L}(X)}$.

\item for each $\alpha =s,c,u$, one has $\mathcal{A}\left(D\left(\mathcal{A}%
\right)\cap \mathcal{P}_\alpha(X)\right)\subset \mathcal{P}_{\alpha }(X)$.
\end{itemize}

Next we set $X^{\alpha }=\mathcal{P}_{\alpha }(X)$ for $\alpha =s,c,u$ and
the following straightforward lemma holds true:

\begin{lemma}
The following holds true:

\begin{itemize}
\item[(i)] The part $\mathcal{A}_{s}$ of $\mathcal{A}$ in $X^{s}$ satisfies $%
D\left(\mathcal{A}_{s}\right)=X^s$ and $r\left( \mathcal{A}_{s}\right) \leq
e^{\rho }$. We furthermore have for each $u\in X^{s}$ and each $(n,k)\in 
\mathbb{N}\times \mathbb{Z}$: 
\begin{equation*}
\left( \mathcal{A}_{s}^{n}u\right) _{k}=U_{\mathbf{A}}^{s}\left(
k,k-n\right) \Pi _{k-n}^{s}u_{k-n}.
\end{equation*}

\item[(ii)] The part $\mathcal{A}_{u}$ of $\mathcal{A}$ in $X^{u}$ satisfies 
$0\in \rho\left(\mathcal{A}_{u}\right)$ and satisfies $r\left( \mathcal{A}%
_{u}^{-1}\right) \leq e^{-\rho }$. We furthermore have for each $u\in X^{u}$
and each $(n,k)\in \mathbb{N}\times \mathbb{Z}$: 
\begin{equation*}
\left( \mathcal{A}_{u}^{-n}u\right) _{k}=U_{\mathbf{A}}^{u}(k,k+n)\Pi
_{k+n}^{u}u_{k+n}
\end{equation*}

\item[(iii)] The part $\mathcal{A}_{c}$ of $\mathcal{A}$ in $X^{c}$
satisfies $D\left(\mathcal{A}_{c}\right)=X^c$. It is invertible on $X^c$ and
satisfies: 
\begin{equation*}
r\left( \mathcal{A}_{c}\right) \leq e^{\rho _{0}}\text{ and }r\left( 
\mathcal{A}_{c}^{-1}\right) \leq e^{\rho _{0}}.
\end{equation*}%
We furthermore have for each $u\in X^{u}$ and each $(n,k)\in \mathbb{Z}%
\times \mathbb{Z}$: 
\begin{equation*}
\left( \mathcal{A}_{c}^{n}u\right) _{k}=U_{\mathbf{A}}^{c}(k,k-n)\Pi
_{k-n}^{c}u_{k-n}
\end{equation*}
\end{itemize}
\end{lemma}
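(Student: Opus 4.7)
The plan is to verify each of the three items by computing the action of powers of $\mathcal{A}_{\alpha}$ explicitly in terms of the evolution operator $U_{\mathbf{A}}^{\alpha}$, and then reading off both the domain and the norm bound from the trichotomy estimates (1.12)--(1.14). Throughout, the intertwining identity from Definition \ref{DE1.6} (ii), namely $\Pi_n^{\alpha} U_{\mathbf{A}}(n,m) = U_{\mathbf{A}}(n,m) \Pi_m^{\alpha}$, serves to transfer between $A_{k-1}$ and $U_{\mathbf{A}}^{\alpha}(k,k-1)$ when a factor $\Pi_{k-1}^{\alpha}$ is present.

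For (i), let $u \in X^s$, so that $u_{k-1} = \Pi_{k-1}^s u_{k-1}$ for every $k$. Then $(\mathcal{A} u)_k = A_{k-1} \Pi_{k-1}^s u_{k-1} = U_{\mathbf{A}}^s(k,k-1) u_{k-1}$, which by (1.13) has norm at most $\kappa e^{-\rho} \|u_{k-1}\|$. In particular $\{A_k u_k\}_{k} \in X$ and $\mathcal{A} u \in X^s$, hence $X^s \subset D(\mathcal{A})$ and $D(\mathcal{A}_s) = X^s$ with $\mathcal{A}_s$ bounded on $X^s$. A straightforward induction on $n$ based on the composition identity $U_{\mathbf{A}}^s(k,k-1) U_{\mathbf{A}}^s(k-1,k-1-n) = U_{\mathbf{A}}^s(k,k-1-n)$ yields the stated formula for $\mathcal{A}_s^n$, and (1.13) then gives $\|\mathcal{A}_s^n\|_{\mathcal{L}(X^s)} \leq \kappa e^{-\rho n}$, whence $r(\mathcal{A}_s) \leq e^{-\rho}$. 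Item (iii) is completely parallel: the same computation with $\alpha = c$ together with (1.12) shows that $\mathcal{A}_c$ is bounded and $\|\mathcal{A}_c^n\|_{\mathcal{L}(X^c)} \leq \kappa e^{\rho_0 n}$ for $n \geq 0$; Definition \ref{DE1.6} (iii) guarantees that $(\mathcal{B} u)_k := U_{\mathbf{A}}^c(k,k+1) u_{k+1}$ defines a bounded operator on $X^c$ with $\mathcal{A}_c \mathcal{B} = \mathcal{B}\mathcal{A}_c = I_{X^c}$, so that $\mathcal{A}_c^{-1} = \mathcal{B}$ and the two-sided formula for $\mathcal{A}_c^n$ follows; the bound (1.12) for negative arguments gives $r(\mathcal{A}_c^{-1}) \leq e^{\rho_0}$.

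Item (ii) is the main obstacle, because $\mathcal{A}$ itself is unbounded: since $\{\|A_k\|\}_k$ need not be uniformly bounded, one cannot conclude that $\mathcal{A}_u$ is bounded on $X^u$, and $D(\mathcal{A}_u)$ is generally a proper subspace of $X^u$. The trick is to construct the inverse first, bypassing $\mathcal{A}_u$ directly. For each $n \geq 1$, set
\begin{equation*}
(R_n u)_k := U_{\mathbf{A}}^u(k, k+n) \Pi_{k+n}^u u_{k+n}, \qquad k \in \mathbb{Z},
\end{equation*}
which by (1.14) satisfies $\|R_n u\|_X \leq \kappa e^{-\rho n}\|u\|_X$, so $R_n \in \mathcal{L}(X^u)$, and which obeys the backward semigroup identity $R_n R_m = R_{n+m}$ inherited from the evolution property of $U_{\mathbf{A}}^u$. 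Using Definition \ref{DE1.6} (iii), which ensures that $A_{k-1}$ and $U_{\mathbf{A}}^u(k-1,k)$ are mutually inverse between $\Pi_{k-1}^u(Y)$ and $\Pi_k^u(Y)$, one verifies directly that $\mathcal{A}_u R_1 = I_{X^u}$ (which in particular forces $R_1(X^u) \subset D(\mathcal{A}_u)$) and $R_1 \mathcal{A}_u v = v$ for every $v \in D(\mathcal{A}_u)$. This yields $0 \in \rho(\mathcal{A}_u)$ with $\mathcal{A}_u^{-1} = R_1$ and, by the semigroup identity, $\mathcal{A}_u^{-n} = R_n$ for every $n \geq 1$, providing both the stated formula and the estimate $r(\mathcal{A}_u^{-1}) \leq e^{-\rho}$.
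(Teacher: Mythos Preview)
Your proof is correct. The paper does not actually provide a proof of this lemma: it introduces it with the phrase ``the following straightforward lemma holds true'' and leaves all verifications to the reader. Your argument supplies precisely the missing computations, namely the explicit identification of $\mathcal{A}_\alpha^n$ with the shifted evolution operators $U_{\mathbf{A}}^\alpha$ and the resulting norm bounds from (1.12)--(1.14). The one point worth recording is that in item~(i) you obtain $r(\mathcal{A}_s)\le e^{-\rho}$, which is of course the bound actually needed in Definition~\ref{DE1.1} and is strictly stronger than the inequality $r(\mathcal{A}_s)\le e^{\rho}$ printed in the statement; the latter is almost certainly a typographical slip in the paper. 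Your handling of item~(ii), where you build $\mathcal{A}_u^{-1}$ directly as the bounded shift operator $R_1$ and verify the two-sided inverse identities rather than attempting to bound $\mathcal{A}_u$ itself, is exactly the right approach given that the sequence $\{A_n\}$ is not assumed uniformly bounded.
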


\begin{remark}
The above discussion and the above lemma imply that the closed linear
operator $\mathcal{A} $ has an exponential trichotomy according to
Definition \ref{DE1.1}.
\end{remark}

Let $\mathbf{B}=\{B_{n}\}_{n\in \mathbb{Z}}$ be a bounded sequence in $%
\mathcal{L}(Y)$. Then let us consider the bounded linear operator $\mathcal{B%
}\in \mathcal{L}(X)$ defined by 
\begin{equation*}
\left( \mathcal{B}u\right) _{k}=B_{k-1}u_{k-1},\;\;\forall k\in \mathbb{Z}%
,\;\forall u\in X.
\end{equation*}%
Then note that one has: 
\begin{equation}
\Vert \mathcal{B}\Vert _{\mathcal{L}(X)}\leq \sup_{k\in \mathbb{Z}}\Vert
B_{k}\Vert _{\mathcal{L}(Y)}.  \label{3.1}
\end{equation}%
We are now interesting in the spectral properties of $\mathcal{A}+\mathcal{B}
$ by applying Theorem \ref{TH1.4}. We fix $0<\rho _{0}<\widehat{\rho _{0}}<%
\widehat{\rho }<\rho $ and $\widehat{\kappa }>\kappa $. Using the constant $%
\delta _{0}>0$ provided by Theorem \ref{TH1.4}, we fix a bounded sequence $%
\mathbf{B}=\{B_{n}\}_{n\in \mathbb{Z}}$ in $\mathcal{L}(Y)$ such that 
\begin{equation*}
\sup_{n\in \mathbb{Z}}\Vert B_{n}\Vert _{\mathcal{L}(Y)}\leq \frac{\delta
_{0}^{2}}{\kappa +\delta _{0}}.
\end{equation*}%
In view of (\ref{3.1}), Theorem \ref{TH1.4} applies to the perturbation
problem $\mathcal{A}+\mathcal{B}$ and operator $\left( \mathcal{A}+\mathcal{B%
}\right) $ has an exponential trichotomy with exponent $\widehat{\rho }_{0}$
and $\widehat{\rho }$ and with constant $\widehat{\kappa }.\ $ If we denote
the three corresponding projectors by $\widehat{\mathcal{P}}_{s},\widehat{%
\mathcal{P}}_{c},\widehat{\mathcal{P}}_{u}\in \mathcal{L}\left( X\right) $
and $\widehat{X}^{\alpha }=\widehat{\mathcal{P}}_{s}\left( X\right) $ we
have: 
\begin{equation}
\begin{pmatrix}
\left( \mathcal{A}+\mathcal{B}\right) _{s}^{.}\widehat{\mathcal{P}}_{s} \\ 
\left( \mathcal{A}+\mathcal{B}\right) _{u}^{-.}\widehat{\mathcal{P}}_{u} \\ 
\left( \mathcal{A}+\mathcal{B}\right) _{c}^{.}\widehat{\mathcal{P}}_{c}%
\end{pmatrix}%
=\left( I-\mathcal{J}\right) ^{-1}%
\begin{pmatrix}
\mathcal{A}_{s}^{.}\mathcal{P}_{s} \\ 
\mathcal{A}_{u}^{-.}\mathcal{P}_{u} \\ 
\mathcal{A}_{c}^{.}\mathcal{P}_{c}%
\end{pmatrix}%
,  \label{3.2}
\end{equation}%
wherein the bounded linear operator $\mathcal{J}$ acting on the Banach space 
$\mathcal{X}:=\mathbb{L}_{-\widehat{\rho }}(\mathbb{N},\mathcal{L}(X))\times 
\mathbb{L}_{-\widehat{\rho }}(\mathbb{N},\mathcal{L}(X))\times \mathbb{L}_{%
\widehat{\rho }_{0}}(\mathbb{Z},\mathcal{L}(X))$ is defined in (\ref{2.49}).
One furthermore has the following estimates 
\begin{equation}
\left\Vert \left( \mathcal{A}+\mathcal{B}\right) _{c}^{n}\widehat{\mathcal{P}%
}_{c}\right\Vert _{\mathcal{L}\left( X\right) }\leq \widehat{\kappa }e^{%
\widehat{\rho }_{0}\left\vert n\right\vert },\forall n\in \mathbb{Z},
\label{3.3}
\end{equation}%
\begin{equation}
\left\Vert \left( \mathcal{A}+\mathcal{B}\right) _{s}^{n}\widehat{\mathcal{P}%
}_{s}\right\Vert _{\mathcal{L}\left( X\right) }\leq \widehat{\kappa }e^{-%
\widehat{\rho }n},\forall n\in \mathbb{N},  \label{3.4}
\end{equation}%
and%
\begin{equation}
\left\Vert \left( \mathcal{A}+\mathcal{B}\right) _{u}^{-n}\widehat{\mathcal{P%
}}_{u}\right\Vert _{\mathcal{L}\left( X\right) }\leq \widehat{\kappa }e^{-%
\widehat{\rho }n},\forall n\in \mathbb{N},  \label{3.5}
\end{equation}%
as well as the following estimates for each $n\in \mathbb{N},$%
\begin{equation}
\left\Vert \left( \mathcal{A}+\mathcal{B}\right) _{s}^{n}\widehat{\mathcal{P}%
}_{s}-\mathcal{A}_{s}^{n}\mathcal{P}_{s}\right\Vert _{\mathcal{L}\left(
X\right) }\leq \frac{\kappa \delta }{\delta _{0}-\delta }e^{-\widehat{\rho }%
n},  \label{3.6}
\end{equation}%
\begin{equation}
\left\Vert \left( \mathcal{A}+\mathcal{B}\right) _{u}^{-n}\widehat{\mathcal{P%
}}_{u}-\mathcal{A}_{u}^{-n}\mathcal{P}_{u}\right\Vert _{\mathcal{L}\left(
X\right) }\leq \frac{\kappa \delta }{\delta _{0}-\delta }e^{-\widehat{\rho }%
n},  \label{3.7}
\end{equation}%
and for each $n\in \mathbb{Z}$%
\begin{equation}
\left\Vert \left( \mathcal{A}+\mathcal{B}\right) _{c}^{n}\widehat{\mathcal{P}%
}_{c}-\mathcal{A}_{c}^{n}\mathcal{P}_{c}\right\Vert _{\mathcal{L}\left(
X\right) }\leq \frac{\kappa \delta }{\delta _{0}-\delta }e^{\widehat{\rho }%
_{0}\left\vert n\right\vert }.  \label{3.8}
\end{equation}

In order to prove our perturbation result, namely Theorem \ref{TH1.8}, we
will show that the perturbed projectors exhibit a suitable structure
inherited from the one of the shift operators $\mathcal{A}$ and $\mathcal{B}$
that reads as 
\begin{equation*}
\left( \widehat{\mathcal{P}}_{\alpha }u\right) _{k}=\widehat{\Pi }%
_{k}^{\alpha }u_{k},\;\;\forall u\in X,\;\alpha =s,c,u,
\end{equation*}%
and wherein for each $k\in \mathbb{Z}$ and $\alpha =s,c,u$, $\widehat{\Pi }%
_{k}^{\alpha }$ denotes a projector of $Y$. To do so, let us introduce for
each $p\in \mathbb{Z}$ the linear bounded operator $\mathcal{D}_{p}\in 
\mathcal{L}(X)$ defined for each $u\in X$ and $k\in \mathbb{Z}$ by 
\begin{equation*}
\left( \mathcal{D}_{p}u\right) _{k}=%
\begin{cases}
u_{p}\text{ if }k=p \\ 
0\text{ if }k\neq p%
\end{cases}%
\end{equation*}%
Together with this notation, let us notice that for each $p\in \mathbb{Z}$
the following commutativity properties hold true: 
\begin{equation}
\begin{split}
& \mathcal{D}_{p}\mathcal{A}_{s}^{n}\mathcal{P}_{s}=\mathcal{A}_{s}^{n}%
\mathcal{P}_{s}\mathcal{D}_{p-n},\;\forall n\geq 0,\;\forall p\in \mathbb{Z},
\\
& \mathcal{D}_{p}\mathcal{A}_{u}^{-n}\mathcal{P}_{u}=\mathcal{A}_{u}^{-n}%
\mathcal{P}_{u}\mathcal{D}_{p+n},\;\forall n\geq 0,\;\forall p\in \mathbb{Z},
\\
& \mathcal{D}_{p}\mathcal{A}_{c}^{n}\mathcal{P}_{u}=\mathcal{A}_{c}^{n}%
\mathcal{P}_{u}\mathcal{D}_{p-n},\;\forall n\in \mathbb{Z},\;\forall p\in 
\mathbb{Z}.
\end{split}
\label{3.9}
\end{equation}%
One may also notice that $\mathcal{B}$ satisfies: 
\begin{equation}
\mathcal{D}_{p}\mathcal{B}=\mathcal{B}\mathcal{D}_{p-1},\;\forall p\in 
\mathbb{Z}.  \label{3.10}
\end{equation}%
If one considers the closed subspace $\mathcal{Z}\subset \mathcal{X}$
defined by 
\begin{equation*}
\mathcal{Z}=\left\{ 
\begin{pmatrix}
E^{s} \\ 
E^{u} \\ 
E^{c}%
\end{pmatrix}%
\in \mathcal{X}:\;%
\begin{pmatrix}
\mathcal{D}_{p}E_{.}^{s}-E_{.}^{s}\mathcal{D}_{p-.} \\ 
\mathcal{D}_{p}E_{.}^{u}-E_{.}^{u}\mathcal{D}_{p+.} \\ 
\mathcal{D}_{p}E_{.}^{c}-E_{.}^{c}\mathcal{D}_{p-.}%
\end{pmatrix}%
=0_{\mathcal{X}},\;\;\forall p\in \mathbb{Z}\right\} ,
\end{equation*}%
then we claim that

\begin{claim}
\label{CL3.3} The linear bounded operator $\mathcal{J}:\mathcal{X}%
\rightarrow \mathcal{X}$ satisfies $\mathcal{J}\mathcal{Z}\subset \mathcal{Z}
$.
\end{claim}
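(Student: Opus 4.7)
The plan is to verify that $\mathcal{Z}$ is invariant under $\mathcal{J}$ by a direct componentwise computation relying only on the commutation identities \eqref{3.9}--\eqref{3.10} together with elementary properties of the shift and restriction operators. Since the conditions defining $\mathcal{Z}$ are continuous linear equations in $\mathcal{X}$, the subspace $\mathcal{Z}$ is automatically closed; what remains is to check that applying any of the nine blocks $\mathcal{J}_{ij}$ to an element of $\mathcal{Z}$ produces a sequence that commutes with every $\mathcal{D}_p$ in the appropriate shifted sense.

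The key ingredients are \eqref{3.9} for $\mathcal{A}_s^n\mathcal{P}_s$, $\mathcal{A}_u^{-n}\mathcal{P}_u$ and $\mathcal{A}_c^n\mathcal{P}_c$, the identity \eqref{3.10} for $\mathcal{B}$, together with two elementary observations. First, the restrictions $\chi_\pm$ merely re-index a $\mathbb{Z}$-indexed sequence as an $\mathbb{N}$-indexed one (with possible sign change), so they transport the $\mathcal{Z}$-condition in a transparent way. Second, $S_-$ sends the index $n$ to $n+1$ and hence simply shifts the expected commutation parameter by one unit. With these rules in hand, the verification for each block reduces to pushing $\mathcal{D}_p$ through the successive summands and checking that the telescoping of shifts produces the correct final value.

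Concretely I would start with the diagonal blocks $\mathcal{J}_{11}$, $\mathcal{J}_{22}$, $\mathcal{J}_{33}$, then handle the off-diagonal ones. As a representative example, for the $\Phi_s\circ\mathcal{B}$ contribution to $\mathcal{J}_{11}$ one has $\Phi_s\circ\mathcal{B}(E^s)_n=\sum_{m=0}^{n-1}\mathcal{A}_s^{n-m-1}\mathcal{P}_s\mathcal{B}E^s_m$ and
\[
\mathcal{D}_p\mathcal{A}_s^{n-m-1}\mathcal{P}_s\mathcal{B}E^s_m=\mathcal{A}_s^{n-m-1}\mathcal{P}_s\mathcal{B}E^s_m\mathcal{D}_{p-n},
\]
obtained by successively invoking \eqref{3.9} (pushes $\mathcal{D}_p$ past $\mathcal{A}_s^{n-m-1}\mathcal{P}_s$, shifting $p$ by $-(n-m-1)$), then \eqref{3.10} (past $\mathcal{B}$, shifting by $-1$), and finally the hypothesis $E^s\in\mathcal{Z}$ (past $E^s_m$, shifting by $-m$). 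Summing over $m$ yields the required relation, and the same bookkeeping disposes of every $\Phi_\alpha$- and $\Theta_{\alpha\beta}$-type sum, including the two-sided operators $\Phi_c$ and $\Theta_{su}$ indexed over $\mathbb{Z}$.

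The main obstacle is not conceptual but combinatorial: one must keep careful track of the shift indices in the cross blocks $\mathcal{J}_{12},\mathcal{J}_{13},\mathcal{J}_{21},\mathcal{J}_{23},\mathcal{J}_{31},\mathcal{J}_{32}$, where compositions of $S_-$, $\chi_\pm$, evaluation at $n=0$ (through $\Theta_s$ and $\Theta_u$) and pre/post-multiplication by $\mathcal{A}_s^{\cdot}\mathcal{P}_s$, $\mathcal{A}_u^{-\cdot}\mathcal{P}_u$ or $\mathcal{A}_c^{\cdot}\mathcal{P}_c$ interact nontrivially; in particular the $\chi_-$ terms flip the sign of the $c$-index and must cooperate with the opposite-sign shifts coming from the $s$- and $u$-blocks. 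In each case, however, the accumulated shifts telescope to $p-n$ for the $s$- and $c$-components and to $p+n$ for the $u$-component, which establishes $\mathcal{J}(E^s,E^u,E^c)^T\in\mathcal{Z}$ and completes the proof of the claim.
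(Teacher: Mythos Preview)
Your proposal is correct and follows essentially the same route as the paper: both proofs push $\mathcal{D}_p$ through each summand of the explicit formulas \eqref{2.41}--\eqref{2.43} using the commutation identities \eqref{3.9}--\eqref{3.10} together with the $\mathcal{Z}$-hypothesis on $(E^s,E^u,E^c)$, and check that the accumulated shifts telescope to the right value. The paper's argument is terser (it only spells out the $F^s$ component and leaves the rest to the reader), whereas you give more detail on how $S_-$, $\chi_\pm$ and the evaluations at $n=0$ interact; but the underlying mechanism is identical.
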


We postpone the proof of this claim and complete the proof of Theorem \ref%
{TH1.8}.\newline
Using the above claim and recalling that $\mathcal{Z}$ is a closed subspace
of $\mathcal{X}$ lead us to 
\begin{equation*}
\left( I-\mathcal{J}\right) ^{-1}\mathcal{Z}\subset \mathcal{Z}.
\end{equation*}%
Indeed since $\Vert \mathcal{J}\Vert _{\mathcal{L}(\mathcal{X})}<1$ then $%
\left( I-\mathcal{J}\right) ^{-1}=\sum_{k=0}^{\infty }\mathcal{J}^{k}$.
Hence due to (\ref{3.9}) one obtains that 
\begin{equation*}
\left(\mathcal{A}_{s}^{.}\mathcal{P}_{s}, \mathcal{A}_{u}^{-.}\mathcal{P}%
_{u}, \mathcal{A}_{c}^{.}\mathcal{P}_{c} \right)^T \in \mathcal{Z},
\end{equation*}%
while (\ref{3.2}) ensures that 
\begin{equation*}
\begin{pmatrix}
\left( \mathcal{A}+\mathcal{B}\right) _{s}^{.}\widehat{\mathcal{P}}_{s} \\ 
\left( \mathcal{A}+\mathcal{B}\right) _{u}^{-.}\widehat{\mathcal{P}}_{u} \\ 
\left( \mathcal{A}+\mathcal{B}\right) _{c}^{.}\widehat{\mathcal{P}}_{c}%
\end{pmatrix}%
\in \mathcal{Z}.
\end{equation*}%
The above statement completes the proof of Theorem \ref{TH1.8}. Indeed let
us first notice that the above result implies that for each $p\in \mathbb{Z}$
and $\alpha =s,c,u$, 
\begin{equation*}
\mathcal{D}_{p}\widehat{\mathcal{P}}_{\alpha }=\widehat{\mathcal{P}}_{\alpha
}\mathcal{D}_{p}.
\end{equation*}%
This means that for each $p\in \mathbb{Z}$ there exists three projectors $%
\widehat{\Pi }_{p}^{\alpha }\in \mathcal{L}(Y)$ for $\alpha =s,c,u$ such
that: 
\begin{equation*}
\left( \widehat{\mathcal{P}}_{\alpha }u\right) _{p}=\widehat{\Pi }%
_{p}^{\alpha }u_{p}\text{ for all $p\in \mathbb{Z}$, $u\in X$ and $\alpha
=s,c,u$}.
\end{equation*}%
Note that the properties $\widehat{\mathcal{P}}_{\alpha }\widehat{\mathcal{P}%
}_{\beta }=0$ for $\alpha \neq \beta $ and $\widehat{\mathcal{P}}_{s}+%
\widehat{\mathcal{P}}_{c}+\widehat{\mathcal{P}}_{u}=I_{X}$ directly re-write
as for each $p\in \mathbb{Z}$: 
\begin{equation*}
\widehat{\Pi }_{p}^{\alpha }\widehat{\Pi }_{p}^{\beta }=0\text{ for $\alpha
\neq \beta $ and }\widehat{\Pi }_{p}^{s}+\widehat{\Pi }_{p}^{c}+\widehat{\Pi 
}_{p}^{u}=I_{Y}.
\end{equation*}%
It remains to check that $\mathbf{A+B}$ has an exponential trichotomy with
constant $\widehat{\kappa }$, exponents $\widehat{\rho }_{0}<\widehat{\rho }$
and associated to the projectors $\left\{ \widehat{\Pi }_{k}^{\alpha
}\right\} _{k\in \mathbb{Z}}$ with $\alpha =s,c,u$.\newline
\textbf{Property $(ii)$ of Definition \ref{DE1.6}:}\newline
For each $k\in \mathbb{Z}$ and $\alpha =s,c,u$ one has for each $u\in D(%
\mathcal{A})$: 
\begin{equation*}
\left[ \widehat{\mathcal{P}}_{\alpha }\left( \mathcal{A}+\mathcal{B}\right) u%
\right] _{k}=\widehat{\Pi }_{k}^{\alpha }\left[ \left( \mathcal{A}+\mathcal{B%
}\right)u\right] _{k}=\widehat{\Pi }_{k}^{\alpha }U_{\mathbf{A+B}%
}(k,k-1)u_{k-1}.
\end{equation*}%
Since for each $u\in D(\mathcal{A})$ one has $\widehat{\mathcal{P}}_{\alpha
}\left( \mathcal{A}+\mathcal{B}\right)u =\left( \mathcal{A}+\mathcal{B}%
\right)\widehat{\mathcal{P}}_{\alpha }u$ and for each $k\in \mathbb{Z}$ and
each $u\in Y$ the sequences $\mathbf{u}^k=\{u_p\}_{p\in\mathbb{Z}}$ defined
by $u_p=0$ for $p\neq k-1$ and $u_{k-1}=u$ belongs to $D\left(\mathcal{A}%
\right)$, one obtains: 
\begin{equation*}
\widehat{\Pi }_{k}^{\alpha }U_{\mathbf{A+B}}(k,k-1)u=U_{\mathbf{A+B}}(k,k-1)%
\left[ \widehat{\mathcal{P}}_{\alpha }\mathbf{u}^k\right] _{k-1}=U_{\mathbf{%
A+B}}(k,k-1)\widehat{\Pi }_{k-1}^{\alpha }u.
\end{equation*}%
As a consequence one gets that for each $k\in\mathbb{Z}$ and $\alpha=s,c,u$: 
\begin{equation*}
\widehat{\Pi }_{k}^{\alpha }U_{\mathbf{A+B}}(k,k-1)=U_{\mathbf{A+B}}(k,k-1)%
\widehat{\Pi }_{k-1}^{\alpha }.
\end{equation*}
This proves statement $(ii)$.\newline
\textbf{Proof of $(iii)$ in Definition \ref{DE1.6}:}\newline
Let us set for each $k\in \mathbb{Z}$ the subspaces $\widehat{Y}_{k}^{\alpha
}=\widehat{\Pi }_{k}^{\alpha }(Y)$. Recall that $0\in \rho\left(\left( 
\mathcal{A}+\mathcal{B}\right)_u\right)$ . Hence for each $v\in \widehat{X}%
_{u}$ there exists a unique $u\in D\left(\mathcal{A}\right)\cap\widehat{X}%
_{u}$ such that $\left( \mathcal{A}+\mathcal{B}\right)u=v$. This re-writes
as for each $k\in \mathbb{Z}$: 
\begin{equation*}
U_{\mathbf{A+B}}(k,k-1)u_{k-1}=v_{k}.
\end{equation*}%
This proves that for each $k\in \mathbb{Z}$ the linear operator $U_{\mathbf{%
A+B}}(k,k-1)$ is invertible from $\widehat{Y}_{k-1}^{u}$ onto $\widehat{Y}%
_{k}^{u}$. Due to composition argument for each $k\in\mathbb{Z}$ and $n\geq
1 $, the linear operator $U_{\mathbf{A+B}}(k,k-n)$ is invertible from $%
\widehat{Y}_{k-n}^{u}$ onto $\widehat{Y}_{k}^{u}$. Furthermore one has for
each $n\geq 0$ and $p\in \mathbb{Z}$: 
\begin{equation*}
\left[ \left( \mathcal{A}+\mathcal{B}\right) ^{-n}\widehat{\mathcal{P}}_{u}v%
\right] _{p}=U_{\mathbf{A+B}}(p,p+n)\widehat{\Pi }_{p+n}^{u}v_{p+n}.
\end{equation*}%
The same arguments hold true for the central part and one obtains that for
each $k\in \mathbb{Z}$ and $n\geq 0$ the linear operator $U_{\mathbf{A+B}%
}(k,k-n)$ is invertible from $\widehat{Y}_{k-n}^{c}$ onto $\widehat{Y}%
_{k}^{c}$. Furthermore one has for each $n\in \mathbb{Z}$ and $p\in \mathbb{Z%
}$: 
\begin{equation*}
\left[ \left( \mathcal{A}+\mathcal{B}\right) ^{n}\widehat{\mathcal{P}}_{c}v%
\right] _{p}=U_{\mathbf{A+B}}(p,p-n)\widehat{\Pi }_{p-n}^{c}v_{p-n}.
\end{equation*}%
This proves that $(iii)$ is true.\newline
\textbf{Proof of $(iv)$ in Definition \ref{DE1.6}:}\newline
The proof of the growth estimates directly follow from the trichotomy
estimates for $\left( \mathcal{A}+\mathcal{B}\right) _{\alpha }^{.}$
recalled in (\ref{3.3})-(\ref{3.5}).

Finally the perturbed estimates for projected evolution semiflow stated in
Theorem \ref{TH1.8} directly follows from (\ref{3.6})-(\ref{3.8}). This
completes the proof of the result.

To complete the proof of Theorem \ref{TH1.8} it remains to prove Claim \ref%
{CL3.3}.\newline

\begin{proof}[Proof of Claim \protect\ref{CL3.3}]
Let $\left( E^{s},E^{u},E^{c}\right) ^{T}\in \mathcal{X}$ be given. Let us
set $\left( F^{s},F^{u},F^{c}\right) ^{T}=\mathcal{J}\left(
E^{s},E^{u},E^{c}\right) ^{T}$. Then according to the definition of $%
\mathcal{J}$ (see (\ref{2.49}) and (\ref{2.41})-(\ref{2.43})) for each $%
n\geq 0$ one has 
\begin{equation*}
\begin{split}
F_{n}^{s}=& \mathcal{A}_{s}^{n}\mathcal{P}_{s}-\sum_{m=0}^{+\infty }\mathcal{%
A}_{s}^{m+n}\mathcal{P}_{s}\mathcal{B}\left[ E_{m+1}^{u}+E_{-m-1}^{c}\right]
\\
& +\sum_{m=0}^{n-1}\mathcal{A}_{s}^{n-m-1}\mathcal{P}_{s}\mathcal{B}%
E_{m}^{s}-\sum_{m=0}^{+\infty }\left[ \mathcal{A}_{u}^{-m-1}\mathcal{P}_{u}+%
\mathcal{A}_{c}^{-m-1}\mathcal{P}_{c}\right] \mathcal{B}E_{n+m}^{s}.
\end{split}%
\end{equation*}%
Recalling (\ref{3.9}) and (\ref{3.10}) one directly checks that for each $%
n\geq 0$ and $p\in \mathbb{Z}$: $\mathcal{D}_{p}F_{n}^{s}=F_{n}^{s}\mathcal{D%
}_{p-n}.$ Using the formula described in (\ref{2.42}) and (\ref{2.43}) one
may directly check the claim.
\end{proof}

\end{document}